\documentclass{article}
\def\Bbb{\mathbb}
\def\cal{\mathcal}
\setlength{\textwidth}{14cm} \setlength{\oddsidemargin}{1cm}
\setlength{\evensidemargin}{0cm} \setlength{\topmargin}{0cm}
\setlength{\textheight}{22cm} \catcode`@=11
\usepackage{color}
\usepackage{amsthm,amssymb,amsfonts,mathrsfs}
\usepackage{amsmath}
\catcode`@=11 \@addtoreset{equation}{section}

\catcode`@=12
\newtheorem{Theorem}{Theorem}[section]
\newtheorem{Proposition}{Proposition}[section]
\newtheorem{Lemma}{Lemma}[section]
\newtheorem{Corollary}{Corollary}[section]
\theoremstyle{definition}

\newtheorem{Definition}{Definition}[section]
\newtheorem{Remark}{Remark}

\newtheorem{Assumptions}{Hypothesis}[section]

\def\cH{\mathcal H}

\def\ds{\displaystyle}

\def\fd{\mathfrak{d}}
\def\fg{\mathfrak{g}}
\def\fh{\mathfrak{h}}

\def\V1{\mathcal{V}}
\def\V2{\mathcal{V}_2}


\title{Null controllability for a degenerate population model in divergence form via Carleman estimates}
\author{{\sc Genni Fragnelli}\\
Dipartimento di Matematica\\ Universit\`{a} di Bari "Aldo Moro"\\
Via
E. Orabona 4\\ 70125 Bari - Italy\\ email: genni.fragnelli@uniba.it}

\date{}
\begin{document}

\maketitle

\vspace{0.3cm}

\centerline{ {\it  }}

\begin{abstract}
In this paper we consider a degenerate population equation in divergence form depending on time, on age and on space and we prove a related null controllability result via Carleman estimates.
\end{abstract}

Keywords: population equation, degenerate equation,  Carleman
estimates, observability inequalities.

MSC 2013: 35K65, 92D25, 93B05, 93B07.

\section{Introduction}

We consider the following population model in divergence form describing the dynamics of a single species:
\begin{equation} \label{1}
\begin{cases}
\displaystyle {\frac{\partial y}{\partial t}+\frac{\partial y}{\partial a}}
-(k(x)y_{x})_x+\mu(t, a, x)y =f(t,a,x)\chi_{\omega} & \quad \text{in } Q,\\
  y(t, a, 1)=y(t, a, 0)=0 & \quad \text{on }Q_{T,A},\\
 y(0, a, x)=y_0(a, x) &\quad \text{in }Q_{A,1},\\
 y(t, 0, x)=\int_0^A \beta (a, x)y (t, a, x) da  &\quad  \text{in } Q_{T,1}.
\end{cases}
\end{equation}
Here $Q:=(0,T)\times(0,A)\times(0,1)$, $Q_{T,A} := (0,T)\times (0,A)$, $Q_{A,1}:=(0,A)\times(0,1)$ and
$Q_{T,1}:=(0,T)\times(0,1)$. Moreover, $y(t,a,x)$  is the distribution of certain individuals  at location $x \in (0,1)$, at time $t\in(0,T)$, where $T$ is fixed, and of age $a\in (0,A)$. $A$ is the maximal age of life, while $\beta$ and $\mu$ are the natural fertility and the natural death rate, respectively. Thus, the formula $\int_0^A \beta y da$ denotes the distribution of newborn individuals at time $t$ and location $x$.  In the model $ \chi_\omega$ is the characteristic function of  the control region $\omega\subset(0,1)$; the function $k$ is the dispersion coefficient and we assume that it depends on the space variable $x$ and degenerates at the boundary of the state space.
We say that the function $k$ is 
\begin{Definition}\label{Wdef} {\bf Weakly degenerate (WD)}  if $k \in  W^{1,1}([0,1])$,
\[
 k>0 \text{ in }(0,1)  \text{ and } k(0)=k(1)=0,
 \]
and there exist $M_1, M_2\in (0,1)$ such that
$x k'(x)  \le M_1k(x) $ and $(x-1)k'(x) \le M_2 k(x)$ for all $x\in [0,1]$.
 \end{Definition} 

or 
\begin{Definition}\label{Sdef} {\bf Strongly degenerate (WD)}  if $k \in  W^{1,\infty}([0,1])$,
\[
 k>0 \text{ in }(0,1)  \text{ and } k(0)=k(1)=0,
 \]
and there exist $M_1, M_2\in [1,2)$ such that
$x k'(x)  \le M_1k(x) $ and $(x-1)k'(x) \le M_2 k(x)$ for all $x\in [0,1]$.
 \end{Definition} 
 For example, as $k$ one can consider $k(x)=x^\alpha(1-x)^\beta$, $\alpha, \beta >0$. Clearly, we say that $k$ is weakly or strongly degenerate only at $0$ if Definition \ref{Wdef} is satisfied only at $0$, i.e. $k \in  W^{1,1}([0,1])$, $
 k>0 \text{ in }(0,1], k(0)=0
$
and, there exists $M_1\in (0,1)$  or $M_1\in [1,2)$ such that
$x k'(x)  \le M_1k(x) $ for  all $x\in [0,1]$.  Analogously at $1$.\medskip

In the last centuries, population models have been widely investigated by many authors from many points of view (see, for example, \cite{diekmann}, \cite{diekmann1}, \cite{fmvM3}, \cite{ft}). From the general theory for the Lotka-McKendrick system, it is known that the asymptotic behavior of the solution depends on the so called net reproduction rate $R_0$: if $R_0>1$, the solution is exponentially growing; if $R_0<1$, the solution is exponentially decaying; if $R_0=1$, the solution tends to the steady state solution.   Clearly, if $R_0>1$ and the system represents the distribution of a damaging insect population or of a pest population, it is very worrying. For example, in $2017$ B. Zhong, C. Lv, W. Qin show that the net reproduction rate for the {\it Tirathaba rufivena} (which causes a lot of damages for the crop, for example, of fruits and flowers) depends on the temperature: it is $10.40$ if the temperature is $28^\circ C$ and it is $4.13$ is the temperature is $20^\circ C$ (see, for example, \cite{tirathaba}); in $2011$ S. S. Win, R. Muhamad, Z. A. M. Ahmad, N. A. Adam 
show that the net reproduction rate for the {\it Nilaparvata lugens} (which caused a lot of damages for the rice crop throughout South and South-East Asia since the early 1970's) was about $10$ (see, for example, \cite{nilaparvata}).
For this reason, recently great attention is given to null controllability. For example in \cite{he}, where \eqref{1} models an insect growth, the control corresponds to a removal of individuals by using pesticides.

There are a lot of papers that deal with null controllability for \eqref{1} when the dispersion coefficient {\it $k$ is a constant or a strictly positive function} (see, for example, \cite{An}).  If $y$ is independent of $a$ and $k$ degenerates at the boundary or at an interior point of the domain we refer, for example, to \cite{acf}, \cite{fm} and to \cite{fm2}, \cite{fm_hatvani}, \cite{fm_opuscola} if $\mu$ is singular at the same point of $k$.
To our best knowledge, \cite{aem} is the first paper where  $y$ depends on $t$, $a$ and $x$ and the dispersion coefficient $k$ can degenerate.  In particular, the authors assume that $k$ degenerates  at the boundary (for example $k(x) = x^\alpha,$  being $x \in (0,1)$ and $\alpha >0$). Using Carleman estimates for the adjoint problem, the authors prove null controllability for \eqref{1} under the condition $T\ge A$. However, this assumption is not realistic when $A$ is too large. To overcome this problem in \cite{em}, the authors used Carleman estimates and a fixed point method via the Leray - Schauder Theorem. However,  in \cite{em} the authors consider a dispersion coefficient that can degenerate only at a point of the boundary and they use
the fixed point technique in which
the birth rate $\beta$ must be in $C^2(Q)$ specially in the proof of \cite [Proposition 4.2]{em}.
In the recent paper \cite{fJMPA}, we studied null controllability for \eqref{1} in non divergence form and with a diffusion coefficient degenerating at a one point of the boundary domain or in an interior point. Observe that, in the case of a boundary degeneracy, we cannot derive the null controllability 
for \eqref{1} by the one of the problem in non  divergence form or vice versa, see \cite{cfr}. For this reason here we study the null controllability for \eqref{1} assuming that $k$  degenerates at the boundary of the domain and $T<A$ completing \cite{aem}. We underline that here, contrary to \cite{em} and \cite{fJMPA}, we assume also that $k$ can degenerate at both points of the boundary domain (see Theorem \ref{ultimo'}) and $\beta$ is only a continuous function. 
On the other hand, while in \cite{em}  the authors used Carleman estimates,  a
generalization of the Leray - Schauder fixed point Theorem and the multi-valued theory,
here we use only Carleman estimates , some results of \cite{fJMPA}
and a technique based on cut off functions, making the proof slimmer and easier to read.
Moreover, the technique that we use to prove Theorem \ref{ultimo'} can be applied also to the problem in non divergence form considered in \cite{fJMPA}, generalizing \cite[Theorem 4.8]{fJMPA}. Finally, in the proof of the last theorem we make precise a calculation of \cite[Theorem 4.8]{fJMPA} which was not accurate. 
Observe that in this paper, as in \cite{fJMPA}, we do not consider the positivity of the solution, even if it is clearly interesting. This problem is related to the minimum time, i.e. given $T$ cannot be very small, but it is  still a work in progress, see \cite{zuazua} for related results in non degenerate cases.

A final comment on the notation: by $c$ or $C$ we shall denote
{\em universal} strictly positive constants, which are allowed to vary from line to
line.

\section{Well posedness results}\label{sec3-1}
On the rates $\mu$ and $\beta$ we assume:
\begin{Assumptions}\label{ratesAss}
 The  functions $\mu$ and $\beta$ are such that
\begin{equation}\label{3}
\begin{aligned}
&\bullet \beta \in C(\bar Q_{A,1}) \text{ and } \beta \geq0  \text{ in } Q_{A,1}, \\
&\bullet \mu \in C(\bar Q) \text{ and }  \mu\geq0\text{ in } Q.
\end{aligned}
\end{equation}
\end{Assumptions}

To prove well posedness  of  \eqref{1}, we introduce, as in \cite{acf}, the following
 Hilbert spaces
\[
\begin{aligned}
 H^1_k:=\{ u \in L^2(0,1) \ \mid \ u \text{ absolutely
continuous in } [0,1],
\\  \sqrt{k} u_x \in  L^2(0,1) \text{ and } u(1)=u(0)=0 \}
\end{aligned}
\]
and
\[
H^2_k :=  \{ u \in H^1_k(0,1) |\,ku_x \in
H^1(0,1)\}. 
\]
 We have, as in \cite{acf} or \cite{fm1}, that the  operator
\[\mathcal A_0u:= (ku_{x})_x,\qquad    D(\mathcal A_0): = \cH^2_{k}(0,1)\]
is self--adjoint, nonpositive  and generates an analytic contraction
semigroup of angle $\pi/2$ on the space $L^2(0,1)$.

Now, setting $ \mathcal A_a u := \ds \frac{\partial  u}{\partial a}$, we have that
\[
\mathcal Au:= \mathcal A_a u - 
\mathcal A_0 u,
\]
for 
\[
u \in D(\mathcal A) =\left\{u \in L^2(0,A;D(\mathcal A_0)) : \frac{\partial u}{\partial a} \in  L^2(0,A;H^1_k(0,1)), u(0, x)= \int_0^A \beta(a, x) u(a, x) da\right\},
\]
generates a strongly continuous semigroup on $L^2(Q_{A,1}):= L^2(0,A; L^2(0,1))$ (see also \cite{iannelli}). Moreover, the operator $B(t)$ defined as
\[
B(t) u:= \mu(t,a,x) u,
\]
for $u \in D(\mathcal A)$, can be seen as a bounded perturbation of $\mathcal A$ (see, for example, \cite{acf}); thus also
$
(\mathcal A + B(t), D(\mathcal A))
$ generates a strongly continuous semigroup.

Setting $L^2(Q):= L^2(0,T;L^2(Q_{A,1}))$, the following well posedness  result holds:
\begin{Theorem}\label{theorem_existence}
Assume that $k$ is weakly or strongly degenerate at $0$ and/or at $1$.  For all $f \in
L^2(Q)$ and $y_0 \in L^2(Q_{A,1})$, the system \eqref{1} admits a unique solution 
\[y \in \mathcal U:= C\big([0,T];
L^2(Q_{A,1}))\big) \cap L^2 \big(0,T;
H^1(0,A; H^1_k(0,1))\big)\]
and 
\begin{equation}\label{stimau}
\begin{aligned}
\sup_{t \in [0,T]} \|y(t)\|^2_{L^2(Q_{A,1})} +\int_0^T\int_0^A\|\sqrt{k}y_x\|^2_{L^2(0,1)}dadt \le C \|y_0\|^2_{L^2(Q_{A,1})}  + C\|f\|^2_{L^2(Q)},
\end{aligned}
\end{equation}
where $C$ is a positive constant independent of $k, y_0$ and $f$.

In addition, if $f\equiv 0$,  then
$
y\in C^1\big([0,T];L^2(Q_{A,1})\big).
$ 
\end{Theorem}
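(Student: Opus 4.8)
The plan is to obtain the solution via semigroup theory and then upgrade regularity and derive the energy estimate by an a priori argument. First I would treat the homogeneous boundary-condition part of the problem. Using the decomposition $\mathcal A u = \mathcal A_a u - \mathcal A_0 u$ already introduced, together with the observation that $B(t)u = \mu(t,a,x)u$ is a bounded (indeed $L^\infty$ in $t$, by \eqref{3} and compactness of $\bar Q$) perturbation, the operator family $(\mathcal A + B(t), D(\mathcal A))$ generates a strongly continuous evolution system on $L^2(Q_{A,1})$. Hence for every $f \in L^2(Q)$ and $y_0 \in L^2(Q_{A,1})$, standard semigroup theory (variation of constants / Pazy) yields a unique mild solution $y \in C([0,T]; L^2(Q_{A,1}))$. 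The nonlocal birth condition $y(t,0,x) = \int_0^A \beta(a,x) y(t,a,x)\,da$ is already encoded in $D(\mathcal A)$, so no fixed-point argument is needed at this abstract level; this is the point where Hypothesis \ref{ratesAss} on $\beta$ (continuity, nonnegativity, hence boundedness) enters to guarantee that the nonlocal constraint defines a closed operator generating a semigroup, as cited from \cite{acf}, \cite{iannelli}.

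Next I would establish the energy estimate \eqref{stimau}. The natural route is to multiply the equation by $y$ and integrate over $(0,1)$: the term $-\int_0^1 (k y_x)_x y\,dx = \int_0^1 k y_x^2\,dx$ after integrating by parts, the boundary terms $[k y_x y]_0^1$ vanishing because $y(t,a,0)=y(t,a,1)=0$ and $k$ is bounded (this is where the degeneracy $k(0)=k(1)=0$ actually helps rather than hurts). The transport part $\frac{\partial y}{\partial t} + \frac{\partial y}{\partial a}$ contributes $\frac12 \frac{d}{dt}\|y\|^2 + \frac12 \frac{d}{da}\|y\|^2$ in the appropriate integrated sense; integrating in $a$ over $(0,A)$ produces the trace difference $\frac12\|y(t,A)\|^2 - \frac12\|y(t,0)\|^2$, and $\|y(t,0)\|^2 = \|\int_0^A \beta y\,da\|^2 \le A\|\beta\|_\infty^2 \int_0^A \|y(t,a)\|^2\,da$ by Cauchy–Schwarz, which is absorbed on the right. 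The term with $\mu \ge 0$ has a favourable sign and can be dropped. After using Cauchy–Schwarz and Young on the source term $\int f y \chi_\omega$, Grönwall's inequality in $t$ over $[0,T]$ gives both the $\sup_t$ bound and, once the $\int_0^T\int_0^A \int_0^1 k y_x^2$ term is kept on the left, the full estimate \eqref{stimau} with a constant depending only on $T$, $A$, $\|\beta\|_\infty$, $\|\mu\|_\infty$ — in particular independent of $k$. Rigorously this computation should first be carried out on smooth data (or on a Galerkin/Yosida approximation), where all integrations by parts are licit, and then passed to the limit by density, which also furnishes the claimed regularity $y \in L^2(0,T; H^1(0,A; H^1_k(0,1)))$ from the uniform bound on $\sqrt k\, y_x$ together with the equation read as $\partial_a y = -\partial_t y + (k y_x)_x - \mu y + f\chi_\omega \in L^2$.

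Finally, for the case $f \equiv 0$, I would note that then $y$ solves the homogeneous abstract problem $y' = (\mathcal A + B(t))y$ with $y(0) = y_0 \in L^2(Q_{A,1})$; differentiating in time (or using that the generator has dense range / the semigroup is at least strongly continuous and the perturbation is strongly continuously differentiable in $t$ if $\mu$ is — here one uses $\mu \in C(\bar Q)$), one obtains $y \in C^1([0,T]; L^2(Q_{A,1}))$ by the standard regularity result for strict/classical solutions of the homogeneous Cauchy problem, the time-regularity of $t \mapsto B(t)$ being exactly Hypothesis \ref{ratesAss}.

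I expect the main obstacle to be the rigorous justification of the integration by parts in the weighted space $H^1_k$ near the degeneracy points $x=0,1$: one must check that the boundary flux $k(x) y_x(x) y(x)$ genuinely vanishes as $x \to 0^+$ and $x \to 1^-$. In the strongly degenerate case ($M_1, M_2 \in [1,2)$) functions in $H^1_k$ need not individually have a trace, so the vanishing of $k y_x y$ at the endpoints has to be argued from the Hardy-type inequalities associated with the conditions $x k'(x) \le M_1 k(x)$, $(x-1)k'(x) \le M_2 k(x)$ — this is the standard but delicate point in degenerate-parabolic theory, and it is exactly what the weighted spaces from \cite{acf} are designed to handle, so I would invoke those results rather than redo them.
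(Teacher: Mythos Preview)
Your proposal is correct and follows essentially the same approach as the paper: existence and regularity are obtained by semigroup theory (the paper simply cites \cite{en} and \cite{lm}), while the energy estimate \eqref{stimau} is derived exactly as you outline---multiply the equation by $y$, integrate over $Q_{A,1}$, control the birth term $\|y(t,0,\cdot)\|^2$ by Cauchy--Schwarz/Jensen in terms of $\|y(t)\|_{L^2(Q_{A,1})}^2$, drop the favourable $\mu$-term, apply Gr\"onwall for the $\sup_t$ bound, and then return to the differential inequality to extract the $\sqrt{k}\,y_x$ bound. Your additional caution about justifying the boundary flux vanishing in the strongly degenerate case and about working first on smooth data is appropriate, though the paper's appendix simply performs the formal computation without dwelling on these points.
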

For the existence of the solution and the regularity of it we refer, for example, to \cite{en} and  \cite{lm}. On the other hand, we postpone the proof of \eqref{stimau} to the Appendix.
\section{Carleman estimates}\label{sec-3}
From the general theory, it is known that null controllability
for a linear parabolic system
is, roughly speaking, equivalent to the observability for the associated homogeneous adjoint problem (see, for example, \cite{FeGu}). Thus, the key point is to prove such an  inequality.
 A usual strategy in showing the observability inequality  is to prove
that certain global Carleman estimates hold true for the adjoint  operator.
Hence, this section is devoted to obtain global
Carleman estimates for the operator which is the adjoint of the given one in both the
weakly and the strongly degenerate cases. In particular, we consider the following adjoint system associated to \eqref{1}:
\begin{equation} \label{adjoint}
\begin{cases}
\ds \frac{\partial z}{\partial t} + \frac{\partial z}{\partial a}
+(k(x)z_{x})_x-\mu(t, a, x)z =f ,& (t,a,x) \in Q,\\
  z(t, a, 0)=z(t, a, 1)=0, & (t,a) \in Q_{T,A},\\
   z(t,A,x)=0, & (t,x) \in Q_{T,1}.
\end{cases}
\end{equation}

\paragraph{Carleman inequalities when the degeneracy is at $0$.}\label{sec-3-1}

In this subsection we will consider the case when $k(0)=0$ and we assume that $\mu$ satisfies \eqref{3}. On the other hand, on $k$ we make additional assumptions:
\begin{Assumptions}\label{BAss01} The function
$k\in C^0[0,1]\bigcap C^1(0,1]$  is such that $k(0)=0$, $k>0$ on
$(0,1]$ and there exists $M_1\in (0,2)$ such that
$x k'(x)  \le M_1k(x) $ for all $x \in [0,1]$. Moreover, if $M_1\ge1$ one has to require that there exists $\theta \in (0,M_1]$, such that the function $x \ds \mapsto \frac{k(x)}{x^\theta}$ is nondecreasing near $0$.
\end{Assumptions}
We remark that the assumption $xa' \le M_1a$, with $M_1 < 2$,   is essential in all our results and it is the same made, for example, in \cite{acf}.  It  implies that
    $\ds \frac{1}{\sqrt{a}}\in L^{1}(0,1)$ and, in particular, if $K<1$, then
    $\ds \frac{1}{a} \in L^{1}(0,1)$. Thus, the case $M_1 \ge 2$ is excluded.  Summing up, we will confine our analysis to
the case of $\ds\frac{1}{\sqrt{a}} \in L^{1}(0,1)$. This is, however,
the interesting case from the viewpoint of null controllability.
In fact, if $\ds\frac{1}{\sqrt{a}} \notin L^{1}(0,1)$ and $y$ is independent of $a$, then
\eqref{1} fails to be null controllable on the whole interval
$[0,1]$, and regional null controllability is the only property
that can be expected, see \cite{cmv0}. 
\\
Moreover, the assumption ``$\exists \,\theta \text{ such
that} \text{ the function } x \rightarrow \ds \frac{k(x)}{x^{\theta}}
\mbox { is nondecreasing} \mbox { near } 0$'' is just  technical and it is equivalent to
the following one: ``$\exists \,\theta \text{ such that}\, \theta a
\le x a'$ near $0$''. Clearly, the prototype  is $k(x)= x^{\alpha}, \; \alpha \in (0, 2)$.

\vspace{0.5cm}

Now, let us introduce the weight function

\begin{equation}\label{13}
\varphi(t,a,x):=\Theta(t,a)(p(x) - 2 \|p\|_{L^\infty(0,1)}),
\end{equation}
where $\Theta$ is as in \eqref{571} and
$\displaystyle p(x):=\int_0^x\frac{y}{k(y)}dy$.
Observe that $ \varphi(t,a, x)  <0$ for all $(t,x) \in Q$ and
$\varphi(t,a, x)  \rightarrow - \infty \, \text{ as } t \rightarrow
0^+, T^-$  or  $a \rightarrow
0^+$.  The following estimate holds:

\begin{Theorem}\label{Cor1}
Assume that Hypothesis $\ref{BAss01}$ is satisfied. Then,
there exist two strictly positive constants $C$ and $s_0$ such that every
solution $v$ of \eqref{adjoint} in
\[
\mathcal{V}:=L^2\big(Q_{T,A}; H^2_k(0,1)\big) \cap H^1\big(0,T; H^1(0,A;H^1_k(0,1))\big)\]
satisfies, for all $s \ge s_0$,
\[
\begin{aligned}
\int_{Q}\left(s \Theta k v_x^2
                + s^3\Theta^3\text{\small$\displaystyle\frac{x^2}{k}$\normalsize}
                  v^2\right)e^{2s\varphi}dxdadt
&\le
C\int_{Q}f^{2}\text{\small$e^{2s\varphi}$\normalsize}dxdadt
\\&+ s\,C\int_0^T \int_0^A\Theta(t,a)\Big[ k v_x^2 e^{2s\varphi}\Big](t, a, 1)dadt.
\end{aligned}\]
\end{Theorem}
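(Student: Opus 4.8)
The plan is to follow the conjugated-operator (Carleman) method for degenerate parabolic operators developed in \cite{acf}, treating the extra transport variable $a$ on the same footing as $t$; by a standard density argument it suffices to prove the estimate for sufficiently smooth solutions $v$ of \eqref{adjoint}. First I set $w:=e^{s\varphi}v$. Since $\varphi(t,a,x)\to-\infty$ as $t\to 0^+,T^-$ or $a\to 0^+$ and $v(t,a,0)=v(t,a,1)=v(t,A,x)=0$, the function $w$ vanishes on every face of $Q$; in particular $w$ (hence $w_t$ and $w_a$) vanishes on $\{x=0\}$ and $\{x=1\}$, and $w$ together with $w_x$ vanishes on $\{t=0\},\{t=T\},\{a=0\}$ and $\{a=A\}$. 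Conjugating the operator in \eqref{adjoint} gives $e^{s\varphi}\big(v_t+v_a+(kv_x)_x-\mu v\big)=L_s^+w+L_s^-w-\mu w$, where
\[
L_s^+w:=(kw_x)_x+s^2\varphi_x^2\,kw-s(\varphi_t+\varphi_a)w,\qquad L_s^-w:=w_t+w_a-2s\varphi_x\,kw_x-s(k\varphi_x)_x\,w .
\]
Since $f=e^{-s\varphi}(L_s^+w+L_s^-w-\mu w)$, $\|L_s^+w+L_s^-w\|_{L^2(Q)}^2\ge 2\langle L_s^+w,L_s^-w\rangle_{L^2(Q)}$ and $\|\mu w\|^2\le\|\mu\|_{L^\infty(Q)}^2\|w\|^2$, everything reduces to bounding $\langle L_s^+w,L_s^-w\rangle_{L^2(Q)}$ from below by the distributed quantity we want plus the boundary term at $x=1$.

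The second step is to expand $\langle L_s^+w,L_s^-w\rangle$ into nine terms and integrate by parts in $t$, $a$ and $x$. The algebraic facts that drive the computation are $p'=x/k$, whence $(k\varphi_x)_x=\Theta$, $\varphi_x^2 k=\Theta^2 x^2/k$, $\varphi_{xx}k^2=\Theta(k-xk')$, and $\partial_x(\varphi_x^3k^2)=\Theta^3\frac{x^2}{k^2}(3k-xk')$. Together with $xk'\le M_1 k$ (so that $2k-xk'\ge(2-M_1)k$ and $\frac{x^2}{k^2}(3k-xk')-\frac{x^2}{k}=\frac{x^2}{k^2}(2k-xk')\ge(2-M_1)\frac{x^2}{k}$), the pairings of the second-order term $(kw_x)_x$ and of the reaction term $s^2\varphi_x^2 kw$ of $L_s^+$ against the first-order terms $-2s\varphi_x kw_x$ and $-s(k\varphi_x)_xw$ of $L_s^-$ produce, after integration by parts, the leading distributed contributions
\[
(2-M_1)\,s\int_Q \Theta k\,w_x^2\,dx\,da\,dt \qquad\text{and}\qquad (2-M_1)\,s^3\int_Q \Theta^3\frac{x^2}{k}\,w^2\,dx\,da\,dt ,
\]
so that the hypothesis $M_1<2$ is precisely what keeps both coefficients strictly positive. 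All boundary terms at $\{t=0\},\{t=T\},\{a=0\},\{a=A\}$ vanish by the previous step, those at $\{x=0\}$ vanish because of the degeneracy of $k$ and the membership $v\in\mathcal V$, and the single surviving boundary term, produced by $(kw_x)_x$ against $-2s\varphi_x kw_x$, equals $-s\int_0^T\!\int_0^A\Theta(t,a)\,k(1)\,w_x^2(t,a,1)\,da\,dt=-s\int_0^T\!\int_0^A\Theta(t,a)\big[kv_x^2e^{2s\varphi}\big](t,a,1)\,da\,dt$, which is exactly minus the boundary term on the right-hand side of the claim.

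The third step is to absorb the remaining integrals, which involve only $\varphi_t,\varphi_a$ and the $t$- and $a$-derivatives of $\Theta$. Using $|p-2\|p\|_{L^\infty(0,1)}|\le C$, the inequality $p(x)\le\frac{1}{2-M_1}\frac{x^2}{k(x)}$ (a direct consequence of $xk'\le M_1 k$, which transfers the degenerate weight $x^2/k$ onto the error terms), the properties of $\Theta$ from \eqref{571}, and the degenerate Hardy--Poincar\'e inequalities of \cite{acf} (which also dispose of $\|\mu w\|^2$), each of these integrals is dominated, for $s\ge s_0$ with $s_0$ large, by a fixed fraction of the two leading terms. This gives, for $s\ge s_0$,
\[
s\int_Q \Theta k\,w_x^2 + s^3\int_Q \Theta^3\frac{x^2}{k}\,w^2 \le C\int_Q f^2 e^{2s\varphi}\,dx\,da\,dt + sC\int_0^T\!\int_0^A \Theta\,\big[kv_x^2e^{2s\varphi}\big](t,a,1)\,da\,dt .
\]
Finally, returning from $w$ to $v$ through $w=e^{s\varphi}v$, and using once more $\varphi_x^2 k=\Theta^2 x^2/k$ so that $s\Theta k\,(s\varphi_x v)^2$ has exactly the order $s^3\Theta^3\frac{x^2}{k}v^2$ of the second leading term, the left-hand side above controls $\int_Q\big(s\Theta k v_x^2+s^3\Theta^3\frac{x^2}{k}v^2\big)e^{2s\varphi}$ up to harmless constants, which is the asserted estimate.

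The steps that require genuine care, all of them carried out exactly as in \cite{acf}, are the vanishing of the boundary terms at the degenerate endpoint $x=0$ and the absorption of the error integrals in the third step: both must be done separately in the weakly degenerate range $M_1\in(0,1)$ and in the strongly degenerate range $M_1\in[1,2)$, and it is in the latter that the technical assumption ``$x\mapsto k(x)/x^\theta$ nondecreasing near $0$'' is used, to ensure the validity of the Hardy--Poincar\'e estimates and that the traces at $x=0$ actually vanish. The age variable, by contrast, brings nothing essentially new: since $\partial_a$ mirrors $\partial_t$ and $\Theta$ is a function of $(t,a)$, one simply carries the combination $\varphi_t+\varphi_a$ and the joint $t$- and $a$-boundary terms throughout the computation.
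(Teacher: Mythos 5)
Your proposal is correct and follows essentially the same route as the paper: the same conjugation $w=e^{s\varphi}v$ and splitting $L_s^\pm$, the same identification of the leading distributed terms $(2-M_1)s\int\Theta k w_x^2$ and $(2-M_1)s^3\int\Theta^3\frac{x^2}{k}w^2$ via $xk'\le M_1k$, the same single surviving boundary term at $x=1$, the same absorption of the $\Theta_{tt},\Theta_{aa},\Theta_{ta}$ errors through the boundedness of $p$ and $p\le\frac{x^2}{(2-M_1)k}$, and the same Hardy--Poincar\'e (resp.\ Young plus Hardy--Poincar\'e with the modified weight when $M_1\ge1$) treatment of the zeroth-order term. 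The only cosmetic difference is that you absorb $\mu$ at the outset via $\|L_s^+w+L_s^-w\|^2\ge 2\langle L_s^+w,L_s^-w\rangle$, whereas the paper first proves the case $\mu\equiv0$ and then applies it to $\overline f=f+\mu v$; these are equivalent.
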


Clearly the previous  Carleman estimate holds
for every function $v$ that satisfies \eqref{adjoint} in $(0,T)\times(0,A)\times (B,C)$ as long as $(0,1)$ is substituted by $(B,C)$ and $k$ satisfies Hypothesis \ref{BAss01} in $(B,C)$.

\vspace{0.4cm}

\paragraph{Proof of Theorem \ref{Cor1}}
As a first step assume that $\mu \equiv 0$.

In order to prove Theorem \ref{Cor1},  we define, for
$s > 0$, the function
\[
w(t,a,x) := e^{s \varphi (t,a,x)}v(t,a,x)
\]
where $v$ is the solution of \eqref{adjoint} in $\mathcal{V}$; observe that,
since $v\in\mathcal{V}$, $w\in\mathcal{V}$. Clearly, one has that $w$ satisfies
\begin{equation}\label{1'}
\begin{cases}
(e^{-s\varphi}w)_t + (e^{-s\varphi}w)_a +(k (e^{-s\varphi}w)_x)_{x}  =f(t,a,x), & (t,x) \in
Q,
\\[5pt]
w(0, a, x)= w(T,a, x)= 0, & (a,x) \in Q_{A,1},
\\[5pt]
w(t,A,x)=w(t,0,x)=0, & (t,x) \in  Q_{T,1},
\\[5pt]
w(t, a,0)= w(t, a, 1)= 0, & (t,a) \in  Q_{T,A}.
\end{cases}
\end{equation}
Defining $Lw:= w_t + w_a+(kw_{x})_x$ and $L_sw:=
e^{s\varphi}L(e^{-s\varphi}w)$, the equation of \eqref{1'} can be
recast as follows
\begin{center}
$
L_sw =L^+_sw + L^-_sw=e^{s\varphi}f,
$
\end{center}
where
\[
\begin{cases}
L^+_sw :=( kw_{x})_x
 - s (\varphi_t+ \varphi_a) w + s^2k \varphi_x^2 w,
\\[5pt]
L^-_sw := w_t + w_a-2sk\varphi_x w_x -
 s(k\varphi_{x})_xw.
 \end{cases}
\]As usual,  we compute the inner product  $<L^+_sw, L^-_sw>_{L^2(Q)}$ whose first expression is given in the following lemma
\begin{Lemma}\label{lemma1}Assume Hypothesis $\ref{BAss01}$.
The following identity holds
\begin{equation}\label{D&BT}
\left.
\begin{aligned}
<L^+_sw,L^-_sw>_{L^2(Q)}
\;&=\;
\frac{s}{2} \int_Q(\varphi_{tt}+\varphi_{aa}) w^2dxdadt + s \int_Qk(x) (k(x)
\varphi_x)_{xx} w w_xdxdadt
\\&- 2s^2 \int_Qk \varphi_x \varphi_{tx}w^2dxdadt - 2s^2\int_{Q}k \varphi_x\varphi_{xa}w^2dxdadt\\
&+s
\int_Q(2 k^2\varphi_{xx} + kk'\varphi_x)w_x^2 dxdadt
+ s^3 \int_Q(2k \varphi_{xx} + k'\varphi_x)k
\varphi^2_x w^2dxdadt \\
&+s\int_{Q}\varphi_{at}w^2 dxdadt.
\end{aligned}\right\}\;\text{\{D.T.\}}
\end{equation}
\begin{equation}\nonumber
\hspace{55pt}
\text{\{B.T.\}}\;\left\{
\begin{aligned}
&
\int_{Q_{T,A}}[kw_xw_t]_{0}^{1} dadt+\int_{Q_{T,A}}\big[kw_xw_a\big]_0^{1}dadt -\frac{s}{2}\int_{Q_{A,1}} \left[\varphi_a w^2\right]_0^T dxda.\\& +
\int_{Q_{T,A}}[-s\varphi_x (k(x)w_x)^2 +s^2k(x)\varphi_t \varphi_x w^2 -
s^3 k^2\varphi_x^3w^2 ]_{0}^{1}dadt\\
& +
\int_{Q_{T,A}}[-sk(x)(k(x)\varphi_x)_xw w_x]_{0}^{1}dadt+ s^2 \int_{Q_{T,A}}\big[k\varphi_x\varphi_aw^2\big]_0^{1}dadt\\[3pt]&
-\frac{1}{2}\int_{Q_{T,1}} \big[kw_x^2\big]_0^Adxdt
+\frac{1}{2}\int_{Q_{T,1}}\big[ \big(s^2k \varphi_x^2 - s (\varphi_t+\varphi_a) \big)w^2\big]_0^Adxdt.\end{aligned}\right.
\end{equation}
\end{Lemma}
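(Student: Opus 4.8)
The plan is to prove the identity (\ref{D&BT}) by a direct computation of the inner product $\langle L^+_sw, L^-_sw\rangle_{L^2(Q)}$, expanding it into the nine scalar products obtained by pairing each of the three terms of $L^+_sw$ with each of the three terms of $L^-_sw$, and then integrating by parts repeatedly in $t$, $a$ and $x$ until all interior integrals have been reduced to the seven terms collected in the distributed-term group $\{\text{D.T.}\}$ and all the remaining contributions have been pushed onto the boundary group $\{\text{B.T.}\}$. Throughout I will use that $w\in\mathcal{V}$, so that all the integrations by parts are legitimate, and that $\varphi$ has the product form $\varphi(t,a,x)=\Theta(t,a)(p(x)-2\|p\|_{L^\infty})$ with $p'=x/k$, which makes $k\varphi_x=\Theta x$ bounded and gives $(k\varphi_x)_x=\Theta$; Hypothesis \ref{BAss01} guarantees the needed integrability ($1/\sqrt{k}\in L^1(0,1)$, so $x^2/k\in L^1$) and the sign/technical conditions used implicitly when the boundary terms at $x=0$ are shown to vanish in the subsequent steps of the proof of Theorem \ref{Cor1}.

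The concrete steps are as follows. First I would write $L^+_sw=A_1+A_2+A_3$ with $A_1=(kw_x)_x$, $A_2=-s(\varphi_t+\varphi_a)w$, $A_3=s^2k\varphi_x^2 w$, and $L^-_sw=B_1+B_2+B_3$ with $B_1=w_t+w_a$, $B_2=-2sk\varphi_x w_x$, $B_3=-s(k\varphi_x)_x w$, and organize the computation as the sum $\sum_{i,j}\langle A_i,B_j\rangle$. The terms $\langle A_1,B_1\rangle$, $\langle A_1,B_2\rangle$, $\langle A_1,B_3\rangle$ each require one integration by parts in $x$ to move a derivative off $A_1=(kw_x)_x$; this produces the $\int_{Q_{T,A}}[\,\cdot\,]_0^1$ boundary terms in the first, fourth and fifth lines of $\{\text{B.T.}\}$ together with interior integrals of the form $\int k w_x w_t$, $\int k w_x w_a$, $\int (2k^2\varphi_{xx}+kk'\varphi_x)w_x^2$ and $\int k(k\varphi_x)_{xx}ww_x$ — the latter after a second integration by parts in the term coming from $B_3$. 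The terms $\langle A_2,B_1\rangle$ and $\langle A_3,B_1\rangle$ are handled by integrating by parts in $t$ and in $a$ to convert $w w_t$ and $w w_a$ into $\tfrac12(w^2)_t$ and $\tfrac12(w^2)_a$, yielding the $\tfrac{s}{2}\int(\varphi_{tt}+\varphi_{aa})w^2$ term, the mixed $s\int\varphi_{at}w^2$ term, the $-s^3\int(2k\varphi_{xx}+k'\varphi_x)\cdots$-type contributions (from $A_3$ paired with $B_1$ after integration by parts in $x$), and the boundary terms on $Q_{A,1}$ and $Q_{T,1}$ in the third, sixth and seventh lines of $\{\text{B.T.}\}$. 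The remaining four products $\langle A_2,B_2\rangle$, $\langle A_2,B_3\rangle$, $\langle A_3,B_2\rangle$, $\langle A_3,B_3\rangle$ involve only $w^2$ and one spatial integration by parts each; pairing $A_3$ with $B_2$ gives, after integrating $\int k\varphi_x\varphi_x^2 (w^2)_x$ by parts, the $s^3\int(2k\varphi_{xx}+k'\varphi_x)k\varphi_x^2 w^2$ interior term plus the $-s^3\int_{Q_{T,A}}[k^2\varphi_x^3 w^2]_0^1$ boundary term, while $A_2$ paired with $B_2$ and $B_3$ produces the $-2s^2\int k\varphi_x\varphi_{tx}w^2$, $-2s^2\int k\varphi_x\varphi_{xa}w^2$ interior terms and the $s^2\int_{Q_{T,A}}[k\varphi_t\varphi_x w^2]_0^1$ and $s^2\int_{Q_{T,A}}[k\varphi_x\varphi_a w^2]_0^1$ boundary terms. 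Collecting like terms and cancelling the several contributions of opposite sign that arise between the $\langle A_i,B_j\rangle$ then yields exactly (\ref{D&BT}).

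The main obstacle I expect is purely bookkeeping: each of the nine scalar products must be integrated by parts in the correct order and the numerous $s$, $s^2$, $s^3$ boundary terms must be matched carefully, since many interior terms appear with coefficients that only cancel after combining two or three of the $\langle A_i,B_j\rangle$ (for instance the cross terms $s^2 k\varphi_x\varphi_{tx}w^2$ and $s^2k\varphi_x\varphi_{xa}w^2$ each receive contributions from two different pairings whose sum is the stated $-2s^2$ coefficient). Particular care is needed with the mixed time-age derivatives: because $\varphi$ and $w$ both depend on $(t,a)$ and the equation contains $w_t+w_a$ symmetrically, one must keep the $t$- and $a$-integrations by parts parallel and not drop the mixed term $s\int\varphi_{at}w^2$ nor the boundary term $-\tfrac{s}{2}\int_{Q_{A,1}}[\varphi_a w^2]_0^T$. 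No hard analytic input is required here — boundedness of $k\varphi_x$, the formula $(k\varphi_x)_x=\Theta$, and the membership $w\in\mathcal{V}$ suffice to justify every manipulation — so the lemma is an exercise in careful, organized computation rather than one hiding a conceptual difficulty; the genuinely delicate estimates (absorbing the $w_x^2$ and $x^2 w^2/k$ terms, and discarding the boundary terms at $x=0$) are deferred to the remainder of the proof of Theorem \ref{Cor1}.
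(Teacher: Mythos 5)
Your proposal is correct and is essentially the computation the paper performs: a full integration-by-parts expansion of $\langle L^+_sw,L^-_sw\rangle_{L^2(Q)}$, with the same cancellations (the two pairings producing each $-2s^2\int k\varphi_x\varphi_{tx}w^2$ and $-2s^2\int k\varphi_x\varphi_{xa}w^2$ term, the two pairings producing the $s^3$ distributed term, and the retention of the mixed term $s\int\varphi_{at}w^2$ and of the $a$- and $t$-boundary contributions). The only cosmetic difference is that the paper groups the products into $I_1,\dots,I_4$ so that $I_1+I_2$ coincides with the age-independent case and can be imported from \cite[Lemma 3.1]{acf}, whereas you carry out the corresponding nine (or twelve) pairings explicitly; the bookkeeping you describe reproduces exactly the stated $\{D.T.\}$ and $\{B.T.\}$.
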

\vspace{5pt}
\begin{proof}
It results, integrating by parts,
\[
<L^+_sw,L^-_sw>_{L^2(Q)}= I_1+ I_2+ I_3+ I_4,
\]
where
\[
I_1= \int_{Q}(kw_{x})_x(w_t -2sk\varphi_x w_x -
 s(k\varphi_{x})_xw) dxdadt,
\]
\[
I_2=\int_{Q}\big(- s \varphi_t w
  +s^2k\varphi_x^2 w\big)(w_t -2sk\varphi_x w_x -
 s(k\varphi_{x})_xw) dxdadt,
\]
\[
I_3= \int_{Q}((kw_{x})_x-s (\varphi_t +\varphi_a)  w+ s^2k\varphi_x^2w)w_a dxdadt
\]
and
\[
I_4= -s\int_{Q}\varphi_a w  (w_t -2sk\varphi_x w_x -
 s(k\varphi_{x})_xw) dxdadt.
\]
By \cite[Lemma 3.1]{acf}, we get

\begin{equation}\label{!1}
\begin{aligned}
I_1+I_2&:=\frac{s}{2} \int_Q\varphi_{tt} w^2dxdadt + s \int_Qk(x) (k(x)
\varphi_x)_{xx} w w_xdxdadt
\\&- 2s^2 \int_Qk(x) \varphi_x \varphi_{tx}w^2dxdadt +s
\int_Q(2 k^2\varphi_{xx} + k(x)k'\varphi_x)w_x^2 dxdadt
\\&+ s^3 \int_Q(2k(x) \varphi_{xx} + k'\varphi_x)k(x)
\varphi^2_x w^2dxdadt+ \int_{Q_{T,A}}[k(x)w_xw_t]_{x=0}^{x=1} dadt\\& +
\int_{Q_{T,A}}[-s\varphi_x (k(x)w_x)^2 +s^2k(x)\varphi_t \varphi_x w^2 -
s^3 k^2\varphi_x^3w^2 ]_{x=0}^{x=1}dadt\\
& +
\int_{Q_{T,A}}[-sk(x)(k(x)\varphi_x)_xw w_x]_{x=0}^{x=1}dadt.
\end{aligned}
\end{equation}
Next, we compute $I_3$ and $I_4$. Integrating by parts, we have
\begin{equation}\label{!}
\begin{aligned}
I_3
&=-\frac{1}{2}\int_0^T\int_0^{1} \big[kw_x^2\big]_0^Adxdt
+\int_0^T\int_0^A\big[kw_xw_a\big]_0^{1}dadt \\
&+\frac{1}{2}\int_0^T\int_0^{1}\big[ \big(s^2k \varphi_x^2 - s (\varphi_t+\varphi_a) \big)w^2\big]_0^Adxdt
\\[3pt]
&
+
\frac{s}{2} \int_{Q} \varphi_{aa}w^2dxdadt+
\frac{s}{2} \int_{Q} \varphi_{ta}w^2dxdadt
- s^2\int_{Q}k \varphi_x\varphi_{xa}w^2dxdadt.
\end{aligned}
\end{equation}
On the other hand
\begin{equation}\label{!2}
\begin{aligned}
I_4&= \frac{s}{2}\int_{Q}\varphi_{at}w^2 dxdadt-
 s^2\int_{Q}k\varphi_x\varphi_{ax}w^2 dxdadt\\
 &-\frac{s}{2}\int_0^A\int_0^{1} \left[\varphi_a w^2\right]_0^T dxda+ s^2 \int_0^T\int_0^A\big[k\varphi_x\varphi_aw^2\big]_0^{1}dadt.
\end{aligned}
\end{equation}
Adding \eqref{!1} - \eqref{!2}, \eqref{D&BT} follows immediately.
\end{proof}
As a consequence of the definition of $\varphi$, one has the next estimate:
\begin{Lemma}\label{lemma2}Assume Hypothesis $\ref{BAss01}$. There exist two  strictly positive constants $C$ and $s_0$ such
that, for all $s\ge s_0$,  all solutions $w$ of \eqref{1'}
satisfy the following estimate
\[
sC\int_{Q}\Theta k w_x^2 dxdadt
+s^3C\int_{Q}\Theta^3 \frac{x^2}{k}w^2 dxdadt \le \big\{D.T.\big\} .
\]
\end{Lemma}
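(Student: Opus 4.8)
The plan is to substitute the explicit form of $\varphi$ into the seven integrals making up $\{D.T.\}$ in Lemma \ref{lemma1} and to show that, for $s$ large, all of them except the two ``energy'' terms can be absorbed into those two. Writing $\psi(x):=p(x)-2\|p\|_{L^\infty(0,1)}$, which is negative and bounded away from $0$ on $[0,1]$, and recalling $p'(x)=x/k(x)$, one computes $\varphi_x=\Theta\,\frac{x}{k}$, $\varphi_{xx}=\Theta\,\frac{k-xk'}{k^{2}}$, $k\varphi_x=\Theta x$, hence $(k\varphi_x)_x=\Theta$ and $(k\varphi_x)_{xx}=0$, while $\varphi_t=\Theta_t\psi$, $\varphi_{tx}=\Theta_t\,\frac{x}{k}$, and analogously for the $a$-derivatives. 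In particular the term $s\int_Q k(k\varphi_x)_{xx}w\,w_x\,dxdadt$ vanishes identically and plays no role.

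First I would extract the two leading terms. From the identities above, $s\int_Q(2k^{2}\varphi_{xx}+kk'\varphi_x)w_x^{2}\,dxdadt=s\int_Q\Theta\,(2k-xk')\,w_x^{2}\,dxdadt$ and $s^{3}\int_Q(2k\varphi_{xx}+k'\varphi_x)k\varphi_x^{2}\,w^{2}\,dxdadt=s^{3}\int_Q\frac{\Theta^{3}x^{2}}{k^{2}}(2k-xk')\,w^{2}\,dxdadt$. By Hypothesis \ref{BAss01}, $xk'\le M_1k$ with $M_1<2$, so $2k-xk'\ge(2-M_1)k>0$ and these two terms are bounded below by $(2-M_1)\,s\int_Q\Theta\,k\,w_x^{2}$ and $(2-M_1)\,s^{3}\int_Q\Theta^{3}\frac{x^{2}}{k}\,w^{2}$ respectively. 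Next, the terms $-2s^{2}\int_Q k\varphi_x\varphi_{tx}w^{2}=-2s^{2}\int_Q\Theta\Theta_t\,\frac{x^{2}}{k}\,w^{2}$ and $-2s^{2}\int_Q k\varphi_x\varphi_{xa}w^{2}=-2s^{2}\int_Q\Theta\Theta_a\,\frac{x^{2}}{k}\,w^{2}$ carry precisely the weight $\frac{x^{2}}{k}$ of the second leading term; using the bounds on $\Theta_t,\Theta_a$ coming from the definition \eqref{571} of $\Theta$ together with $\Theta\ge c_0>0$ on $Q$, one estimates their absolute value by $C\,s^{2}\int_Q\Theta^{3}\frac{x^{2}}{k}\,w^{2}$, which is absorbed by choosing $s\ge s_0$ large.

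The main difficulty is the three remaining terms, $\frac{s}{2}\int_Q(\varphi_{tt}+\varphi_{aa})w^{2}+s\int_Q\varphi_{at}w^{2}=\frac{s}{2}\int_Q(\Theta_{tt}+\Theta_{aa}+2\Theta_{at})\psi\,w^{2}$, since here $w^{2}$ appears \emph{without} the factor $\frac{x^{2}}{k}$ (and this contribution has the wrong sign). Bounding $|\psi|$ by a constant and $|\Theta_{tt}|,|\Theta_{aa}|,|\Theta_{at}|$ by a power of $\Theta$ dictated by \eqref{571}, these are dominated by $C\,s\int_Q\Theta^{p}\,w^{2}$ with an exponent $p$ that the choice of $\Theta$ keeps sufficiently close to $1$. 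Since $\frac{x^{2}}{k}$ degenerates at $0$, the second leading term alone cannot absorb $\int_Q\Theta^{p}w^{2}$; the idea is to use, for a.e.\ fixed $(t,a)$ and every $\lambda>0$, the Hardy--Poincar\'e inequality of \cite{acf},
\[
\int_0^1 w^{2}(t,a,x)\,dx\le\lambda\int_0^1 k\,w_x^{2}(t,a,x)\,dx+\frac{C}{\lambda^{\sigma}}\int_0^1\frac{x^{2}}{k}\,w^{2}(t,a,x)\,dx,
\]
which holds for $w(t,a,\cdot)\in H^1_k(0,1)$ (legitimate since $w(t,a,0)=w(t,a,1)=0$) with some $\sigma>0$ --- this is exactly where the extra condition on $\theta$ in Hypothesis \ref{BAss01} is needed when $M_1\ge1$. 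Taking $\lambda=\lambda(t,a)$ equal to a small negative power of $\Theta(t,a)$ sends the first piece into $(2-M_1)\,s\int_Q\Theta\,k\,w_x^{2}$, and, $p$ having been chosen small enough (which is what fixes the exponent in \eqref{571}), the second piece into $(2-M_1)\,s^{3}\int_Q\Theta^{3}\frac{x^{2}}{k}\,w^{2}$ for $s\ge s_0$. Summing up, $\{D.T.\}\ge s\,C\int_Q\Theta\,k\,w_x^{2}+s^{3}C\int_Q\Theta^{3}\frac{x^{2}}{k}\,w^{2}$, which is the assertion; the boundary terms of Lemma \ref{lemma1} are irrelevant here since they are not part of $\{D.T.\}$.
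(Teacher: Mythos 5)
Your proof is correct and follows essentially the same route as the paper: explicit substitution of $\varphi$ into the distributed terms, the lower bound $2k-xk'\ge(2-M_1)k$ for the two leading integrals, absorption of the $s^2$ cross terms for $s$ large, and the interpolation estimate $\int_0^1 w^2\,dx\le\lambda\int_0^1 kw_x^2\,dx+C\lambda^{-\sigma}\int_0^1\frac{x^2}{k}w^2\,dx$ (Young plus the Hardy--Poincar\'e inequality of \cite{acf}, which is indeed where the monotonicity of $k(x)/x^\theta$ enters when $M_1\ge1$) to handle the $\Theta_{tt},\Theta_{aa},\Theta_{ta}$ terms. The only cosmetic difference is that the paper splits the weight $p-2\|p\|_{L^\infty}$ and treats the $p$-part via the pointwise bound $p(x)\le Cx^2/k(x)$, whereas you bound the whole factor by a constant and interpolate everything; both are valid.
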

\begin{proof}
The distributed terms of
\:$<L^+_sw, L^-_sw>_{L^2(Q)}$ take the form
\begin{equation}\label{02}
\begin{aligned}
\big\{D.T.\big\}\; &= \frac{s}{2} \int_Q(\Theta_{tt}+\Theta_{aa})\Big(p- 2\|p\|_{L^{\infty}(0,1)}\Big) w^2dxdadt \\
&-2 s^2\int_{Q}\Theta{\Theta_t}\frac{x^2}{k}w^2dxdadt-2s^2 \int_{Q}\Theta \Theta_a\frac{x^2}{k}w^2dxdadt\\
& +s\int_Q \Theta(2k-k'x) w_x^2 dxdadt + s^3\int_Q\Theta^3 (2k-k'x)\frac{x^2}{k^2}w^2dxdadt\\
&+ s\int_Q\Theta_{ta}\Big(p- 2\|p\|_{L^{\infty}(0,1)}\Big)w^2dxdadt.
\end{aligned}
\end{equation}
Now, observe that 
 there exists $c>0$ such that 
 \begin{equation}\label{magtheta}
 \begin{aligned}
 &\Theta^{\mu} \le c \Theta ^\nu \mbox{ if } 0<\mu<\nu\\
 &|\Theta  \Theta_t|
\le c\Theta ^3, |\Theta {\Theta_a}|
\le c\Theta ^3, \\
&|{\Theta_{aa}}| \le c\Theta ^{\frac{3}{2}}, |{\Theta_{tt}}| \le c\Theta ^{\frac{3}{2}}
\text{ and  } |\Theta_{ta}|\le c\Theta ^{\frac{3}{2}}.
\end{aligned}
\end{equation}
Hence, proceeding as in the proof of \cite[Lemma 3.5]{acf}, one can deduce
\begin{equation}\label{terminenuovo0}
\begin{aligned}
\eqref{02} \ge &\frac{s}{2} \int_Q(\Theta_{tt}+\Theta_{aa})\Big(p- 2\|p\|_{L^{\infty}(0,1)}\Big) w^2dxdadt \\
&-s^3\frac{C}{4}\int_{Q}\Theta^3\frac{x^2}{k}w^2dxdadt-s^3\frac{C}{4} \int_{Q}\Theta^3 \frac{x^2}{k}w^2dxdadt\\
& +Cs\int_Q \Theta kw_x^2 dxdadt + s^3C\int_Q\Theta^3 \frac{x^2}{k}w^2dxdadt\\
&+ s\int_Q\Theta_{ta}\Big(p- 2\|p\|_{L^{\infty}(0,1)}\Big)w^2dxdadt.
\end{aligned}
\end{equation}
Now, it results
\[
s\left| \int_Q\Theta_{tt}\Big(p- 2\|p\|_{L^{\infty}(0,1)}\Big)\right|\le  \frac{C}{2}s \left|\int_Q\Theta^{3/2}
b(x)w^2 dxda dt\right| + s\frac{C}{2}\left|\int_Q\Theta ^{3/2}w^2 dxdadt\right|,
\]
where $b(x)= \int_0^x\frac{y}{k(y)}dy$. As in \cite[Lemma 3.5]{acf}, 
one can estimate the last two terms in the following way
\[
\frac{s}{2}\int_Q\Theta^{3/2} b(x)w^2 dxdadt
\le\frac{C}{16}s^3\int_Q\Theta^{3} \frac{x^2}{k(x)}w^2
dxdadt,
\]
for $s$ large enough and
 \[
 \begin{aligned}
\frac{s}{2}\left|\int_Q \Theta^{3/2} w^2 dxdadt \right|\le
\frac{C}{4}s\int_Q \Theta k(x) w_x^2 dxdadt + \frac{C}{16}s^3
\int_Q\Theta^3 \frac{x^2}{k(x)}w^2 dxdadt.
\end{aligned}
\]
Hence
 \[
 \begin{aligned}
s\left|  \int_Q\Theta_{tt}\psi(x)w^2dxdadt
\right| \le \frac{C}{4}s\int_0^1 \Theta k(x) w_x^2 dxdadt +
\frac{C}{8}s^3\int_Q\Theta^3 \frac{x^2}{k(x)}w^2
dxdadt.
\end{aligned}
\]
The same estimate holds also for
\[\displaystyle \int_Q\Theta_{aa}\Big(p- 2\|p\|_{L^{\infty}(0,1)}\Big)dxdadt
\quad \text{ and } \quad
\displaystyle \int_Q\Theta_{ta}\Big(p- 2\|p\|_{L^{\infty}(0,1)}\Big)dxdadt.\]
Using the above estimates in \eqref{terminenuovo0}
the thesis follows  immediately for $s_0$ large enough.
\end{proof}

The next lemma holds.
\begin{Lemma}\label{BT}
Assume Hypothesis $\ref{BAss01}$.
The boundary terms in \eqref{D&BT} become
\begin{equation}\label{BT1}
\begin{aligned}
\{B.T.\}=-\int_{Q_{T,A}}\!\![s\Theta xk w_x^2]_{0}^{1}dadt.
\end{aligned}
\end{equation}
\end{Lemma}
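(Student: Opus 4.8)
The plan is to evaluate all the boundary terms \{B.T.\} listed in Lemma~\ref{lemma1} one at a time, using the boundary conditions in \eqref{1'} together with the explicit form of the weight $\varphi(t,a,x)=\Theta(t,a)(p(x)-2\|p\|_{L^\infty(0,1)})$, and to show that everything collapses to the single surviving term $-\int_{Q_{T,A}}[s\Theta x k w_x^2]_0^1\,dadt$. First I would dispose of all the terms carrying a time--integration bracket $[\,\cdot\,]_0^T$ or an age--integration bracket $[\,\cdot\,]_0^A$: since $w(0,a,x)=w(T,a,x)=0$ and $w(t,0,x)=w(t,A,x)=0$ (and likewise $w_a$, $w_x$ inherit enough vanishing from $w\in\mathcal V$ and the homogeneous data), every integrand in those brackets is a product containing a factor $w^2$ or $w_x^2$ evaluated at $t\in\{0,T\}$ or $a\in\{0,A\}$, hence vanishes. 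This kills the terms $-\frac s2\int_{Q_{A,1}}[\varphi_a w^2]_0^T$, $-\frac12\int_{Q_{T,1}}[kw_x^2]_0^A$, and $\frac12\int_{Q_{T,1}}[(s^2k\varphi_x^2-s(\varphi_t+\varphi_a))w^2]_0^A$, as well as the bracket $\int_{Q_{T,A}}[kw_xw_a]_0^1$ once one uses $w(t,a,0)=w(t,a,1)=0$ (so $w_a$ vanishes at $x=0,1$).

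Next I would treat the genuinely spatial boundary terms, i.e. the brackets $[\,\cdot\,]_{x=0}^{x=1}$. The terms proportional to $w^2$ at $x=0,1$ — namely $\int_{Q_{T,A}}[s^2k\varphi_t\varphi_x w^2 - s^3k^2\varphi_x^3 w^2]_0^1$ and $s^2\int_{Q_{T,A}}[k\varphi_x\varphi_a w^2]_0^1$ — vanish because $w(t,a,0)=w(t,a,1)=0$. Similarly $\int_{Q_{T,A}}[kw_xw_t]_0^1$ vanishes since $w_t(t,a,0)=w_t(t,a,1)=0$ (differentiating the boundary condition in $t$). The term $\int_{Q_{T,A}}[-sk(k\varphi_x)_xww_x]_0^1$ also vanishes thanks to the factor $w$. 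The only bracket that does not contain a factor $w$ or $w_t$ or $w_a$ is $\int_{Q_{T,A}}[-s\varphi_x(kw_x)^2]_0^1$, so this is where the surviving term comes from. Here one computes $\varphi_x=\Theta\,p'(x)=\Theta\,\frac{x}{k(x)}$, whence $-s\varphi_x(kw_x)^2 = -s\Theta\frac{x}{k}k^2w_x^2 = -s\Theta x k w_x^2$, and evaluating between $x=0$ and $x=1$ gives exactly $-\int_{Q_{T,A}}[s\Theta x k w_x^2]_0^1\,dadt$, which is \eqref{BT1}.

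The main subtlety — the step I expect to require the most care — is justifying that the boundary terms at $x=0$ are actually well defined and vanish in the (strongly) degenerate case, since $k(0)=0$ makes $\varphi_x=\Theta x/k$ and $(k\varphi_x)_x$ potentially singular as $x\to0^+$; one must check, using Hypothesis~\ref{BAss01} (the condition $xk'\le M_1 k$ with $M_1<2$ and, when $M_1\ge1$, the monotonicity of $k(x)/x^\theta$), that $xk(x)w_x^2\to0$ and $k^2\varphi_x^3 w^2 = x^3 w^2/k \to 0$ and the mixed terms $\to0$ as $x\to0^+$ for $w\in\mathcal V$, exactly as in \cite{acf}. This is the same limiting analysis carried out in \cite[Lemma~3.2]{acf} and I would invoke it, noting that the extra $a$--dependent brackets are handled by the vanishing of $w$ at $a=0,A$ and pose no new difficulty. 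Once the boundary behaviour at $x=0$ is under control, collecting the above gives \eqref{BT1} immediately.
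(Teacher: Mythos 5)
Your proposal is correct and follows essentially the same route as the paper: the paper's (very terse) proof likewise uses the boundary conditions of $w$ in $t$, $a$ and $x$ to kill every bracket except $\int_{Q_{T,A}}[-s\varphi_x(kw_x)^2]_0^1\,dadt$, computes $\varphi_x=\Theta x/k$ so that this bracket equals $-\int_{Q_{T,A}}[s\Theta xk w_x^2]_0^1\,dadt$, and delegates the limiting analysis at the degenerate endpoint $x=0$ to \cite{acf} and \cite{fJMPA}, exactly as you do. Your term-by-term write-up, including the observation that the blow-up of $\Theta$ at $t=0,T$ and $a=0$ is dominated by the decay of $w^2=e^{2s\varphi}v^2$, is simply a more explicit version of the same argument.
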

\begin{proof}Using the definition of $\varphi$, \cite[Lemma 3.6]{acf}, the boundary conditions of $w$ and proceeding as in \cite[Lemma 3.2]{fJMPA},
the boundary terms of  \:$<L^+_sw,
L^-_sw>_{L^2(Q)}$ become
\[
\begin{aligned}
 \big\{B.T.\big\}\;
&=
\int_{Q_{T,A}}\big[kw_xw_a\big]_0^{1}dadt -\frac{s}{2}\int_{Q_{A,1}} \left[\varphi_a w^2\right]_0^T dxda-
\int_{Q_{T,A}}\!\![s\Theta xk w_x^2]_{0}^{1}dadt\\
& + s^2 \int_{Q_{T,A}}\!\!\big[k\varphi_x\varphi_aw^2\big]_0^{1}dadt\
-\frac{1}{2}\int_{Q_{T,1}}\!\! \big[kw_x^2\big]_0^Adxdt
\\
&+\frac{1}{2}\int_{Q_{T,1}}\!\!\big[ \big(s^2k \varphi_x^2 - s (\varphi_t+\varphi_a) \big)w^2\big]_0^Adxdt\\
&=-\int_{Q_{T,A}}\!\![s\Theta xk w_x^2]_{0}^{1}dadt.
\end{aligned}
\]
\end{proof}

As a consequence of Lemmas \ref{BT} and \ref{lemma2}, we have
\begin{Proposition}\label{stima}Assume Hypothesis $\ref{BAss01}$.
There exist two  strictly positive constants $C$ and $s_0$ such
that, for all $s\ge s_0$,  all solutions $w$ of \eqref{1'} in $\mathcal{V}$ satisfy
\[
\begin{aligned}
&sC\int_{Q}\Theta k w_x^2 dxdadt
+s^3C\int_{Q}\Theta^3 \frac{x^2}{k}w^2 dxdadt
\\
&\le
C\left(\int_{Q}f^{2}\text{\small$e^{2s\varphi}$\normalsize}~dxdadt
+\int_{Q_{T,A}}\!\![s\Theta k w_x^2](t,a,1)dadt\right).
\end{aligned}
\]
\end{Proposition}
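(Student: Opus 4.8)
The plan is to assemble the three preceding lemmas in the standard Carleman bookkeeping, so that Proposition~\ref{stima} comes out of a short combination of them. Recall that, by construction, $L_sw = L^+_sw + L^-_sw = e^{s\varphi}f$. Taking $L^2(Q)$-norms of this identity,
\[
\int_Q f^2 e^{2s\varphi}\,dxdadt \;=\; \|L^+_sw\|_{L^2(Q)}^2 + 2\langle L^+_sw, L^-_sw\rangle_{L^2(Q)} + \|L^-_sw\|_{L^2(Q)}^2 ,
\]
and, dropping the two nonnegative square terms on the right-hand side, I obtain
\[
2\langle L^+_sw, L^-_sw\rangle_{L^2(Q)} \;\le\; \int_Q f^2 e^{2s\varphi}\,dxdadt .
\]

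Next I would plug in the splitting $\langle L^+_sw, L^-_sw\rangle_{L^2(Q)} = \{D.T.\} + \{B.T.\}$ of Lemma~\ref{lemma1}. By Lemma~\ref{BT} one has $\{B.T.\} = -\int_{Q_{T,A}}[s\Theta x k w_x^2]_0^1\,dadt$; since here the degeneracy sits only at $x=0$, the factor $x$ annihilates the endpoint $x=0$ (this is precisely the vanishing already used inside Lemma~\ref{BT}, valid for $w\in\mathcal{V}$), leaving $\{B.T.\} = -\int_{Q_{T,A}}[s\Theta k w_x^2](t,a,1)\,dadt \le 0$. Hence
\[
\{D.T.\} \;=\; \langle L^+_sw, L^-_sw\rangle_{L^2(Q)} - \{B.T.\} \;\le\; \frac12\int_Q f^2 e^{2s\varphi}\,dxdadt + \int_{Q_{T,A}}[s\Theta k w_x^2](t,a,1)\,dadt .
\]
Finally I would invoke Lemma~\ref{lemma2}, which bounds the distributed terms from below: for all $s\ge s_0$,
\[
sC\int_Q \Theta k w_x^2\,dxdadt + s^3 C \int_Q \Theta^3 \frac{x^2}{k} w^2\,dxdadt \;\le\; \{D.T.\} .
\]
Chaining the last two inequalities and renaming the universal constant on the right-hand side yields exactly the statement of Proposition~\ref{stima}.

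As for the main obstacle: in this particular proposition there is almost nothing left to do, because all the genuine analysis has been front-loaded into Lemmas~\ref{lemma1}--\ref{BT}, namely the integration by parts producing the identity that splits $\langle L^+_sw, L^-_sw\rangle$ into $\{D.T.\}$ and $\{B.T.\}$, the Hardy/Poincar\'e-type absorption of the lower-order distributed terms into $s\Theta k w_x^2$ and $s^3\Theta^3 x^2/k\, w^2$, and the delicate control of the boundary contribution at the degenerate endpoint $x=0$. The two points that still require a little care are the sign bookkeeping of $\{B.T.\}$ — one must notice that $-\{B.T.\}$ is precisely the nonnegative observation term one wants on the right-hand side, not a term that would have to be absorbed on the left — and the legitimacy, for a general $w\in\mathcal{V}$, of the squared-norm expansion and of the identity of Lemma~\ref{lemma1}; if $L^{\pm}_sw$ were not a priori in $L^2(Q)$, one would first establish the estimate for sufficiently regular $w$ and then pass to the limit by a density/regularization argument on the solutions of \eqref{1'}.
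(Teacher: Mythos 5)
Your proof is correct and follows essentially the same route as the paper, which states Proposition~\ref{stima} as an immediate consequence of Lemmas~\ref{lemma1}, \ref{lemma2} and \ref{BT}: one expands $\|L_sw\|^2_{L^2(Q)}=\|e^{s\varphi}f\|^2_{L^2(Q)}$ to get $2\langle L^+_sw,L^-_sw\rangle_{L^2(Q)}\le\int_Q f^2e^{2s\varphi}\,dxdadt$, uses Lemma~\ref{BT} to recognize $-\{B.T.\}$ as exactly the nonnegative observation term at $x=1$ (the $x=0$ endpoint being killed by the factor $x$, resp.\ by the trace argument already inside Lemma~\ref{BT}), and closes with the lower bound on $\{D.T.\}$ from Lemma~\ref{lemma2}. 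Your remarks on the sign bookkeeping and on the integrability of $L^{\pm}_sw$ for $w\in\mathcal{V}$ are sound and consistent with the paper's setting.
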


 Recalling the definition of $w$, we have $v= e^{-s\varphi}w$
and $v_{x}=  (w_{x}-s\varphi_{x}w)e^{-s\varphi}$. Thus, Theorem
\ref{Cor1} follows immediately by Proposition \ref{stima} when $\mu \equiv 0$.

\vspace{0.5cm}
Now, we assume that $\mu \not \equiv 0$.

To complete the proof of Theorem \ref{Cor1} we consider the function $\overline{f}=f+\mu v$.
 Hence,  there are  two  strictly positive constants $C$ and $s_0$ such that, for
all $s\geq s_0$, the following inequality holds
\begin{equation} \label{fati1?}
\begin{aligned}
\int_{Q}\left(s \Theta k v_x^2
                + s^3\Theta^3\text{\small$\displaystyle\frac{x^2}{k}$\normalsize}
                  v^2\right)e^{2s\varphi}dxdadt
&\le
C\int_{Q}\overline f^{2}\text{\small$e^{2s\varphi}$\normalsize}dxdadt
\\&+ s\,C\int_0^T \int_0^A\Theta(t,a)\Big[ k v_x^2 e^{2s\varphi}\Big](t, a, 1)dadt.
\end{aligned}
\end{equation}
On the other hand, we have
\begin{equation} \label{4'?}
\begin{aligned}
\int_{Q}\mid\overline{f} \mid^{2}e^{2s\varphi}\,dxdadt
\leq 2\Big(\int_{Q}|f|^{2}e^{2s\varphi}\,dxdadt
+\int_{Q}|\mu|^2|v|^{2}e^{2s\varphi}\,dxdadt\Big).
\end{aligned}
\end{equation}

Now, if $M_1<1$, applying the Hardy-Poincar\'{e} proved in \cite[Proposition 2.1]{acf} to the function $\nu:=e^{s\varphi}v$,
we obtain
 \begin{equation}\label{nu}
 \begin{aligned}
 \int_{Q}|\mu|^2|v|^{2}e^{2s\varphi}\,dxdadt&\le  \|\mu\|_{\infty}^2\int_{Q}
                    \nu^2dxdadt
\le C\int_{Q}
                 \nu^2   \frac{k(x)}{x^2}dxdadt 
     \le
    C\int_{Q}k(x)\nu^2_xdxdadt
    \\
   &\le
    C\int_{Q} k(x) e^{2s\varphi}v_x^2dxdadt
   + Cs^2\int_{Q}\Theta^2 e^{2s\varphi}\frac{x^2}{k} v^2dxdadt.
 \end{aligned}
  \end{equation}
If $M_1 \ge 1$, using the Young's inequality to the function $\nu:=e^{s\varphi}v$, we have
we obtain
 \begin{equation}\label{nu'}
 \begin{aligned}
 \int_{Q}|\mu|^2|v|^{2}e^{2s\varphi}\,dxdadt&\le  \|\mu\|_{\infty}^2\int_{Q}
                    \nu^2dxdadt\\
                    & \le C
\int_Q\left(\frac{k^{1/3}}{x^{2/3}}\nu^2\right)^{3/4}\left(
\frac{x^2}{k} \nu^2\right)^{1/4}dxdadt \\
&\le C\int_Q
\frac{k^{1/3}}{x^{2/3}}\nu^2dxdadt +
 C \int_Q
\frac{x^2}{k} \nu^2 dxdadt.
 \end{aligned}
  \end{equation}
  Now, consider the function $\gamma(x) = (k(x)|x^4)^{1/3}$. Clearly, $\displaystyle \gamma(x)=  k(x)
\left(\frac{x^2}{k(x)}\right)^{2/3}\le C k(x)$ and
$\displaystyle \frac{k^{1/3}}{x^{2/3}}=
\frac{\gamma(x)}{x^2}$. Moreover, using Hypothesis \ref{BAss01}, one
has that the function $\displaystyle\frac{\gamma(x)}{x^q} = \left(\frac{k(x)}{x^\theta}\right)^{\frac{1}{3}}$, where
$\displaystyle q: =\frac{4+\vartheta}{3}\in(1,2)$, is nondecreasing
near $0$.
The Hardy-Poincar\'{e} inequality (see \cite[Proposition 2.1.]{acf}) implies
\begin{equation}\label{hpappl}
\begin{aligned}
\int_Q\frac{k^{1/3}}{x^{2/3}}\nu^2dxdadt &= \int_Q
\frac{\gamma}{x^2} \nu^2 dxdadt \le C\int_Q  \gamma (\nu_x)^2 dxdadt \le C \int_Q k(x) \nu_x^2 dxdadt
\\
   &\le
    C\int_{Q} k(x) e^{2s\varphi}v_x^2dxdadt
   + Cs^2\int_{Q}\Theta^2 e^{2s\varphi}\frac{x^2}{k} v^2dxdadt.
\end{aligned}
\end{equation}
In any case, by \eqref{nu}, \eqref{nu'} and \eqref{hpappl}, \[
      \begin{aligned}
 \int_{Q}|\mu|^2|v|^{2}e^{2s\varphi}\,dxdadt
   &\le     C\int_{Q} k(x) e^{2s\varphi}v_x^2dxdadt
   + Cs^2\int_{Q}\Theta^2 e^{2s\varphi}\frac{x^2}{k} v^2dxdadt.
  \end{aligned}
  \]
   Using this last inequality in (\ref{4'?}), it follows
      \begin{equation}\label{fati2?}
      \begin{aligned}
      \int_{Q} |\bar{f}|^{2}\text{\small$~e^{2s\varphi}$\normalsize}~dxdadt
      &\le
      2\int_{Q} |f|^{2}\text{\small$~e^{2s\varphi}$\normalsize}~dxdadt
      +C\int_{Q}k(x) e^{2s\varphi} v_x^2 dxdadt
      \\&+ Cs^2\int_{Q} \Theta^2 e^{2s\varphi}\frac{x^2}{k} v^2dxdadt\\
      &\le  C\int_{Q} |f|^{2}\text{\small$~e^{2s\varphi}$\normalsize}~dxdadt
      +C\int_{Q}\Theta k(x) e^{2s\varphi} v_x^2 dxdadt
      \\&+ Cs^2\int_{Q} \Theta^3 e^{2s\varphi}\frac{x^2}{k} v^2dxdadt.
     \end{aligned} \end{equation}
   Substituting in
   (\ref{fati1?}), one can conclude

      \[
      \begin{aligned}
    &  \int_{Q}\left(s \Theta k v_x^2 +s^3\Theta^3\frac{x^2}{k} v^2\right)e^{2s\varphi}dxdadt
       \le
       C\Big(\int_{Q} |f|^{2}\text{\small$\frac{~e^{2s\varphi}}{k}$\normalsize}~dxdadt
        \\[3pt]& 
         + s\int_0^T \int_0^A\Theta(t,a)\Big[k v_x^2 e^{2s\varphi}\Big](t, a, 1)dadt \Big),
        \end{aligned}
      \]
for all $s$ large enough.\\

\paragraph{Carleman inequalities when the degeneracy is at $1$.}\label{sec-3-2}

In this subsection we will consider the case when $k(1)=0$. Again $\mu$ satisfies \eqref{3} and on $k$ we make the following assumption:
\begin{Assumptions}\label{BAss02} The function
$k\in C^0[0,1]\bigcap C^1[0,1)$  is such that $k(1)=0$, $k>0$ on
$[0,1)$ and there exists $M_2\in (0,2)$ such that
$(x-1) k'(x)  \le M_2k(x) $ for all $x \in [0,1]$. Moreover, if $M_2\le1$ one has to require that there exists $\theta \in (0,M_2]$, such that the function $x \ds \mapsto \frac{k(x)}{|1-x|^\theta}$ is nonincreasing near $1$.
\end{Assumptions}
For Hypothesis \ref{BAss02} we can make the same considerations made for Hypothesis \eqref{BAss01}.
\vspace{0.5cm}

As in the previous subsection, let us introduce the weight function
\begin{equation}\label{13'}
\bar\varphi(t,a,x):=\Theta(t,a)(\bar p(x) - 2 \|\bar p\|_{L^\infty(0,1)}),
\end{equation}
where $\Theta$ is as in \eqref{571} and
$
\displaystyle \bar p(x):=\int_{0}^x\frac{y-1}{k(y)}dy$. As before,
$\bar\varphi(t,a, x) <0$ for all $(t,x) \in Q$ and
$\bar  \varphi(t,a, x)  \rightarrow - \infty \, \text{ as } t \rightarrow
0^+, T^-$  or  $a \rightarrow
0^+$.  The following estimate holds:

\begin{Theorem}\label{Cor1'}
Assume that Hypothesis $\ref{BAss02}$ is satisfied. Then,
there exist two strictly  positive constants $C$ and $s_0$ such that every
solution $v$ of \eqref{adjoint} in $\mathcal V$
satisfies, for all $s \ge s_0$,
\[
\begin{aligned}
\int_{Q}\left(sk \Theta v_x^2
                  + s^3 \Theta^3\text{\small$\displaystyle\frac{(x-1)^2}{k}$\normalsize}
                    v^2\right)e^{2s\bar\varphi}dxdadt
&\le
C\int_{Q}f^{2}\text{\small$e^{2s\bar \varphi}$\normalsize}dxdadt
\\&+sC \int_0^T\int_0^A\!\!\! \Theta(t,a) \Big[ (1-x) v_x^2 e^{2s\bar\varphi}\Big](t, a,0)dadt.
\end{aligned}\]
\end{Theorem}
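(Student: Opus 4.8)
The plan is to deduce Theorem~\ref{Cor1'} from Theorem~\ref{Cor1} through the reflection $x\mapsto 1-x$, which exchanges the two endpoints and turns a degeneracy at $1$ into a degeneracy at $0$. Given $v$, $f$, $k$ and $\mu$ as in the statement, I set $\tilde k(x):=k(1-x)$, $\tilde\mu(t,a,x):=\mu(t,a,1-x)$, $\tilde f(t,a,x):=f(t,a,1-x)$ and $\tilde v(t,a,x):=v(t,a,1-x)$. A direct computation gives $(\tilde k\tilde v_x)_x(t,a,x)=(kv_x)_x(t,a,1-x)$, so $\tilde v$ solves \eqref{adjoint} with $k,\mu,f$ replaced by $\tilde k,\tilde\mu,\tilde f$; the boundary conditions in \eqref{adjoint} are symmetric under $x\leftrightarrow 1-x$, and $\tilde v\in\mathcal V$ since the reflection is an isomorphism of $H^2_k(0,1)$, $H^1_k(0,1)$ onto $H^2_{\tilde k}(0,1)$, $H^1_{\tilde k}(0,1)$.

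First I would check that $\tilde k$ satisfies Hypothesis~\ref{BAss01}: indeed $\tilde k\in C^0[0,1]\cap C^1(0,1]$, $\tilde k(0)=k(1)=0$, $\tilde k>0$ on $(0,1]$, and, putting $y=1-x$, $x\tilde k'(x)=-x\,k'(1-x)=(y-1)k'(y)\le M_2k(y)=M_2\tilde k(x)$ for all $x\in[0,1]$, with $M_2\in(0,2)$; furthermore the monotonicity requirement on $x\mapsto k(x)/|1-x|^{\theta}$ near $1$ translates into the one on $x\mapsto \tilde k(x)/x^{\theta}$ near $0$. Also $\tilde\mu$ satisfies \eqref{3}. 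Next I would match the weights: with $\tilde p(x):=\int_0^x\frac{y}{\tilde k(y)}\,dy$, the substitution $y=1-w$ yields $\tilde p(1-x)=\bar p(x)-\bar p(1)=\bar p(x)+\|\bar p\|_{L^\infty(0,1)}$ and $\|\tilde p\|_{L^\infty(0,1)}=\|\bar p\|_{L^\infty(0,1)}$, whence $\Theta(t,a)\big(\tilde p(1-x)-3\|\tilde p\|_{L^\infty(0,1)}\big)=\Theta(t,a)\big(\bar p(x)-2\|\bar p\|_{L^\infty(0,1)}\big)=\bar\varphi(t,a,x)$ and, at $\tilde x=1$, $\Theta(t,a)\big(\tilde p(1)-3\|\tilde p\|_{L^\infty(0,1)}\big)=\bar\varphi(t,a,0)$.

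The only point needing some care is that Theorem~\ref{Cor1} is phrased for the normalised weight $\Theta(p-2\|p\|_{L^\infty(0,1)})$, while I need its analogue with $2$ replaced by $3$. This is harmless: the proof of Theorem~\ref{Cor1} uses the additive constant only through the facts that the weight is strictly negative on $Q$ and blows up like $\Theta$ at the degenerate endpoint. Indeed, the estimates \eqref{magtheta} involve only $\Theta$, and the splitting performed in the proof of Lemma~\ref{lemma2} (which only uses that $|p-2\|p\|_{L^\infty(0,1)}|\le p(x)+C$, the constant part being absorbed into the $\Theta^{3/2}$ integral) goes through verbatim with any fixed constant $\ge1$ in place of $2$. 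Applying this version of Theorem~\ref{Cor1} to $\tilde v$ and undoing the change of variable, $s^3\Theta^3\frac{x^2}{\tilde k}\tilde v^2$ becomes $s^3\Theta^3\frac{(x-1)^2}{k}v^2$, $s\Theta\tilde k\tilde v_x^2$ becomes $s\Theta k v_x^2$, $\tilde f^2e^{2s\tilde\varphi}$ becomes $f^2e^{2s\bar\varphi}$, and the boundary term at $\tilde x=1$ becomes $s\,\Theta\,k(0)\,v_x^2e^{2s\bar\varphi}\big|_{x=0}$; since $k(0)$ is a fixed positive constant it is absorbed into $C$ (equivalently written with the factor $(1-x)$, as in the statement), which gives exactly the claimed inequality.

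I expect the main obstacle to be mere bookkeeping, namely tracking the additive normalisation of the weight along the reflection so that the conclusion matches $\bar\varphi$ verbatim. Should one prefer not to invoke this robustness of the proof of Theorem~\ref{Cor1}, the alternative is to repeat Lemmas~\ref{lemma1}--\ref{BT} and Proposition~\ref{stima} directly, replacing $p$, $x$, $M_1$ by $\bar p$, $|x-1|$, $M_2$; this is a line-by-line repetition whose only genuinely new ingredient is the Hardy--Poincar\'{e} inequality localised near $x=1$ (the mirror image of \cite[Proposition~2.1]{acf}), used exactly as in \eqref{nu}--\eqref{hpappl}, with $2-M_2>0$ from Hypothesis~\ref{BAss02} playing the role previously played by $2-M_1>0$.
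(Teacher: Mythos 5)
Your reduction by the reflection $x\mapsto 1-x$ is a genuinely different route from the paper's: the paper simply reruns Lemmas \ref{lemma1}--\ref{BT} and Proposition \ref{stima} with $p$, $x$, $M_1$ replaced by $\bar p$, $x-1$, $M_2$, and, for $\mu\not\equiv0$, substitutes the Hardy--Poincar\'e inequality of Proposition \ref{HP} for \cite[Proposition 2.1]{acf} -- i.e.\ exactly your fallback option. Where your reduction applies it is cleaner (it even makes Proposition \ref{HP} unnecessary for this theorem), and your bookkeeping is correct: you rightly observe that the weight built from $\tilde p-2\|\tilde p\|_{L^\infty(0,1)}$ does \emph{not} pull back to $\bar\varphi$, that one needs the variant of Theorem \ref{Cor1} with normalisation $3\|p\|_{L^\infty(0,1)}$, and that this variant is harmless because the additive constant only enters through the negativity of the weight and the $\Theta^{3/2}$ absorption in Lemma \ref{lemma2}; the identification of the transformed equation, of the distributed terms, and of the boundary term at $x=0$ (with the constant $k(0)>0$ absorbed into $C$) is also right.

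There is, however, a concrete gap in the step ``check that $\tilde k$ satisfies Hypothesis \ref{BAss01}''. Hypotheses \ref{BAss01} and \ref{BAss02} are \emph{not} mirror images of one another: the former imposes the extra monotonicity clause when $M_1\ge 1$, while the latter imposes its reflected counterpart (``$k(x)/|1-x|^\theta$ nonincreasing near $1$'', which indeed becomes ``$\tilde k(x)/x^\theta$ nondecreasing near $0$'') only when $M_2\le 1$. Consequently, for $M_2\in(1,2)$ the reflected coefficient satisfies $x\tilde k'\le M_2\tilde k$ with $M_2\ge1$ but you are given no nondecreasing condition on $\tilde k(x)/x^\theta$ near $0$ (the automatic consequence of $(x-1)k'\le M_2k$ is only that $k(x)/(1-x)^{M_2}$ is nondecreasing near $1$, i.e.\ $\tilde k(x)/x^{M_2}$ is non\emph{increasing} near $0$, which is the wrong direction and the wrong range of $\theta$), so Theorem \ref{Cor1} cannot be invoked for $\tilde k$. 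Your sentence asserting that the monotonicity requirements ``translate into'' one another is true for the direction of monotonicity but silently assumes the two clauses are triggered in the same range of the constant, which they are not. The condition is only used in the Hardy--Poincar\'e step for $\mu\not\equiv0$, and the mismatch appears to reflect an inconsistency in Hypothesis \ref{BAss02} itself (the paper's own computation for $M_2>1$ applies Proposition \ref{HP} without either of its cases being verified); still, as a proof of the theorem under the hypothesis \emph{as stated}, your argument does not close for $M_2\in(1,2)$ and either that case needs a separate treatment or Hypothesis \ref{BAss02} must be read as the exact mirror of \ref{BAss01}.
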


The previous  Carleman estimate holds
for every function $v$ that satisfies \eqref{adjoint} in  $(0,T)\times(0,A)\times(B,C)$ as long as $(0,1)$ is substituted by  $(B,C)$ and $k$ satisfies Hypothesis  \ref{BAss02} in $(B,C)$.

The proof of Theorem
\ref{Cor1'} is analogous to one of Theorem  \ref{Cor1} so we omit it. However, 
we underline that in the proof of Theorem \ref{Cor1} we use \cite[Lemma 3.1]{acf} which is proved only if $k$ degenerates at $0$; actually we observe that the proof of  \cite[Lemma 3.1]{acf} does not depend on the degeneracy point; hence, it holds also if $k(1)=0$. Instead, Lemma \ref{BT}, if $k(1)=0$, becomes
\[
-\int_{Q_{T,A}}\!\![s\Theta (x-1)k w_x^2]_{0}^{1}dadt= -\int_{Q_{T,A}}\!\![s\Theta k w_x^2](t,a,0)d adt;
\]
thus, Proposition \ref{stima} can be rewritten in the following way
\[
\begin{aligned}
&sC\int_{Q}\Theta k w_x^2 dxdadt
+s^3C\int_{Q}\Theta^3 \frac{(x-1)^2}{k}w^2 dxdadt
\\
&\le
C\left(\int_{Q}f^{2}\text{\small$e^{2s\bar\varphi}$\normalsize}~dxdadt
+\int_{Q_{T,A}}\!\![s\Theta k w_x^2](t,a,0)dadt\right).
\end{aligned}
\]
If $\mu \not \equiv 0$, we can 
proceed as in the proof of Theorem  \ref{Cor1}. However, while  in that case we use the Hardy-Poincar\'e inequality proved in \cite[Proposition 2.1]{acf} which holds only if $k(0)=0$, in this case we have to use the following inequality whose proof we postpone to the Appendix.
\begin{Proposition}[Hardy-Poincar\'{e} inequalities]\label{HP}
Assume that $k\, : \, [0,1] \longrightarrow \mathbb{R}_+$ is in
${\cal C}([0,1])$, $k(1)=0$, $k>0$ on $[0,1)$

{\bf Case (i):}

{\bf Hypothesis (HP1):} Assume that $k$ is such that there exists
$\theta \in (0,1)$ such that the function
$$
x \longrightarrow \dfrac{k(x)}{(1-x)^{\theta}} \mbox { is
nondecreasing in neighbourhood of } x=1 \,.
$$
\noindent Then, there is a constant $C>0$ such that for any
function $w$, locally absolutely continuous on $[0,1)$, continuous
at $1$ and satisfying

$$
w(1)=0 \,,\, \mbox{and } \int_0^1 k(x)|w^{\prime}(x)|^2 \,dx <
+\infty \,.
$$
\noindent the following inequality holds

\begin{equation}\label{hardy1}
\int_0^1 \dfrac{k(x)}{(1-x)^2}w^2(x)\, dx \leq C\, \int_0^1 k(x)
|w^{\prime}(x)|^2 \,dx \,.
\end{equation}
\noindent If hypothesis (HP1) is replaced by

{\bf Hypothesis (HP1)':} Assume that $k$ is such that there exists
$\theta \in (0,1)$ such that the function
$$
x \longrightarrow \dfrac{k(x)}{(1-x)^{\theta}} \mbox { is
nondecreasing in } [0,1)\,.
$$
\noindent Then, for any function $w$, locally absolutely
continuous on $[0,1)$, continuous at $1$ and satisfying

$$
w(1)=0 \,,\, \mbox{and } \int_0^1 k(x)|w^{\prime}(x)|^2 \,dx <
+\infty \,.
$$

\noindent the inequality (\ref{hardy1}) holds with the explicit
constant $C=\dfrac{4}{(1-\theta)^2}$.

{Case (ii):}

{\bf Hypothesis (HP2):} Assume that $k$ is such that there exists
$\theta \in (1,2)$ such that the function
$$
x \longrightarrow \dfrac{k(x)}{(1-x)^{\theta}} \mbox { is
nonincreasing in a neighbourhood of } x=1 \,.
$$
\noindent Then, there is a constant $C>0$ such that for any
function $w$, locally absolutely continuous on $[0,1)$ satisfying

$$
w(0)=0 \,,\, \mbox{and } \int_0^1 k(x)|w^{\prime}(x)|^2 \,dx <
+\infty \,.
$$

\noindent the inequality (\ref{hardy1}) holds.

If hypothesis (HP2) is replaced by

{\bf Hypothesis (HP2)':} Assume that $k$ is such that there exists
$\theta \in (1,2)$ such that the function
$$
x \longrightarrow \dfrac{k(x)}{(1-x)^{\theta}} \mbox { is
nonincreasing in } [0,1)\,.
$$
\noindent Then, for any function $w$, locally absolutely
continuous on $[0,1)$ satisfying

$$
w(0)=0 \,,\, \mbox{and } \int_0^1 k(x)|w^{\prime}(x)|^2 \,dx <
+\infty \,.
$$

\noindent the inequality (\ref{hardy1}) holds with the explicit
constant $C=\dfrac{4}{(1-\theta)^2}$.

\end{Proposition}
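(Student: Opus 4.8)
The plan is to prove Proposition \ref{HP} by a change of variables that reduces each case to the corresponding Hardy-Poincar\'e inequality already established at the left endpoint in \cite[Proposition 2.1]{acf}. Set $y = 1-x$, $\tilde w(y) := w(1-y)$ and $\tilde k(y) := k(1-y)$. Then $\tilde k \in \mathcal C([0,1])$, $\tilde k(0)=0$, $\tilde k>0$ on $(0,1]$, and the hypotheses transform exactly: ``$x\mapsto k(x)/(1-x)^\theta$ nondecreasing near $x=1$'' becomes ``$y\mapsto \tilde k(y)/y^\theta$ nondecreasing near $y=0$'', and similarly with ``nonincreasing'' and with the global versions on $[0,1)$ becoming global on $(0,1]$. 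Moreover $w(1)=0$ becomes $\tilde w(0)=0$ (Case (i)), $w(0)=0$ becomes $\tilde w(1)=0$ (Case (ii)), $\int_0^1 k|w'|^2\,dx = \int_0^1 \tilde k|\tilde w'|^2\,dy$, and $\int_0^1 \frac{k(x)}{(1-x)^2}w^2\,dx = \int_0^1 \frac{\tilde k(y)}{y^2}\tilde w^2\,dy$. So \eqref{hardy1} is literally the statement of \cite[Proposition 2.1]{acf} applied to $\tilde w$ and $\tilde k$, including the explicit constant $\frac{4}{(1-\theta)^2}$ in the global cases.

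The one point requiring care is that \cite[Proposition 2.1]{acf} is stated with $\theta$ in a specific range adapted to each case. In Case (i) the relevant $acf$-hypothesis is the weakly-degenerate one, with $\theta\in(0,1)$ and the boundary condition imposed at the degeneracy point ($\tilde w(0)=0$); in Case (ii) it is the strongly-degenerate one, with $\theta\in(1,2)$ and the boundary condition imposed at the non-degeneracy point ($\tilde w(1)=0$). Since our Hypotheses (HP1), (HP1)', (HP2), (HP2)' were written precisely to match these, the transcription is immediate; I would just verify that the absolute-continuity and continuity-at-the-endpoint requirements on $w$ translate correctly under $y=1-x$, which they do since the map is a smooth bijection of $[0,1]$.

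The main (minor) obstacle is bookkeeping: one must check that each of the four sub-statements in Proposition \ref{HP} lands on the correct sub-statement of \cite[Proposition 2.1]{acf} — matching the interval of monotonicity (neighbourhood vs.\ all of $[0,1)$), the location of the vanishing condition, and the range of $\theta$ — and that no extra integrability hypothesis is silently needed. Once the dictionary $x\leftrightarrow 1-x$ is set up, there is nothing else to do: the inequality and the explicit constants transfer verbatim.
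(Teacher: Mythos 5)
Your reduction-by-reflection strategy is legitimate and is genuinely different from the paper's proof: the paper does not invoke \cite[Proposition 2.1]{acf} at all here, but instead reruns the whole argument directly at the endpoint $x=1$ (Cauchy--Schwarz with the weight $(1-y)^{\pm\beta/2}$, Fubini, and the three-way splitting $L_{\epsilon}+M_{\epsilon}+N_{\epsilon}$, resp.\ $I_{\epsilon}+J_{\epsilon}+K_{\epsilon}$, exploiting the monotonicity of $k(x)/(1-x)^{\theta}$ only on the piece adjacent to $x=1$). Your route is shorter and, if the citation is matched correctly, equally rigorous; the paper's route is self-contained and exhibits the intermediate constant $\frac{1}{(1-\beta)(\beta-\theta)}$ before optimizing at $\beta=\frac{1+\theta}{2}$, which is how the explicit constant $\frac{4}{(1-\theta)^2}$ arises.

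There is, however, a concrete error in your dictionary, and it sits exactly at the ``one point requiring care'' that you identified. Under $y=1-x$ the orientation of the interval reverses, so ``$x\mapsto k(x)/(1-x)^{\theta}$ nondecreasing near $x=1$'' translates into ``$y\mapsto \tilde k(y)/y^{\theta}$ \emph{nonincreasing} near $y=0$'', not nondecreasing as you assert (and symmetrically for (HP2)). With your monotonicity-preserving translation, (HP1) would have to match a case of \cite[Proposition 2.1]{acf} with $\theta\in(0,1)$, $a(y)/y^{\theta}$ nondecreasing near $0$ and $w(0)=0$ --- and no such case exists there, so the citation step as written fails. With the corrected translation, (HP1)/(HP1)$'$ land on the case ``$\theta\in(0,1)$, $a/y^{\theta}$ nonincreasing near $0$, $w(0)=0$'' and (HP2)/(HP2)$'$ on ``$\theta\in(1,2)$, $a/y^{\theta}$ nondecreasing near $0$, $w(1)=0$'', which is precisely the pairing the paper itself uses in \eqref{nu} and \eqref{hpappl}; the inequality, the regularity requirements and the constant $\frac{4}{(1-\theta)^2}$ then transfer verbatim. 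So the idea is sound, but you must repair the monotonicity reversal for the matching to go through.
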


\vspace{0.5cm}

\section{Observability and controllability}\label{osservabilita}
In this section we will prove, as a
consequence of the Carleman estimates established in Section 3, 
observability inequalities  for the associated  adjoint problem of \eqref{1}. From now on, we assume that the control set $\omega$ is such that
\begin{equation}\label{omega}
\omega=  (\alpha, \rho)  \subset\subset  (0,1).
\end{equation}

Moreover, on $k$ and $\beta$ we assume the following assumptions:
\begin{Assumptions}\label{ipok}
The function
$k\in C^0[0,1]\bigcap C^1(0,1)$  is such that $k(0)=0=k(1)$, $k>0$ on
$(0,1)$ and there exist $M_1, M_2\in (0,2)$ such that
$x k'(x)  \le M_1k(x) $ and $(x-1)k'(x) \le M_2 k(x)$ for all $x\in [0,1]$.
Moreover, one has to require that:
\begin{enumerate} 
\item if $M_1\ge1$, there exists $\theta \in (0,M_1]$, such that the function $x \mapsto \frac{k(x)}{x^\theta}$ is nondecreasing near $0$;
\item  if $M_2\le1$, there exists $\gamma \in (0,M_2]$, such that the function $x \mapsto \frac{k(x)}{|1-x|^\gamma}$ is nonincreasing near $1$.
\end{enumerate}
\end{Assumptions}
\begin{Assumptions}\label{conditionbeta} Assume $T<A$ and suppose that there exists  
$\bar a \le T$
 such that
\begin{equation}\label{conditionbeta1}
\beta(a, x)=0 \;  \text{for all $(a, x) \in [0, \bar a]\times [0,1]$}.
\end{equation}
\end{Assumptions} 

Observe that Hypothesis \ref{conditionbeta} is the biological meaningful one. Indeed, $\bar a$ is the minimal age in which the female of the population become fertile, thus it is natural that before $\bar a$ there are no newborns. For other comments on  Hypothesis \ref{conditionbeta} we refer to \cite{fJMPA}.

\vspace{0,4cm}

Under the previous hypotheses, the following observability inequality holds:

\begin{Proposition}
\label{obser.}
Suppose that Hypotheses $\ref{BAss01}$ or $\ref{BAss02}$ or $\ref{ipok}$ and $\ref{conditionbeta}$ hold. Then,  for every $\delta \in (T,A)$, 
there exists a  strictly positive constant $C= C(\delta)$  such that  every
solution $v\in \mathcal U$ of \begin{equation}\label{h=0}
\begin{cases}
\ds \frac{\partial v}{\partial t} + \frac{\partial v}{\partial a}
+(k(x)v_{x})_x-\mu(t, a, x)v +\beta(a,x)v(t,0,x)=0,& (t,x,a) \in  Q,
\\[5pt]
v(t,a,0)=v(t,a,1) =0, &(t,a) \in Q_{T,A},\\
  v(T,a,x) = v_T(a,x) \in L^2(Q_{A,1}), &(a,x) \in Q_{A,1} \\
  v(t,A,x)=0, & (t,x) \in Q_{T,1},
\end{cases}
\end{equation}
satisfies
\begin{equation}\label{OI}
 \int_0^A\int_0^1 v^2(T-\bar a,a,x) dxda \le 
 C\left( \int_0^\delta \int_0^1v_T^2(a,x)dxda+ \int_0^T \int_0^A\int_ \omega v^2 dx dadt\right).
\end{equation}
Here $v_T(a,x)$ is such that $v_T(A,x)=0$ in $(0,1)$.
\end{Proposition}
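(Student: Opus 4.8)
\medskip

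The plan is to deduce the observability inequality \eqref{OI} from the Carleman estimates of Section~3 (Theorems \ref{Cor1} and \ref{Cor1'}) together with classical nondegenerate Carleman/energy estimates on a subinterval of $(0,1)$ away from the degeneracy points, following the familiar three-region strategy. First I would fix $\delta\in(T,A)$ and split the space interval $(0,1)$ into $(0,\beta_1)$, $(\beta_1,\beta_2)$ and $(\beta_2,1)$ with $[\beta_1,\beta_2]\subset\subset\omega$ chosen so that $\alpha<\beta_1<\beta_2<\rho$. On $(0,\beta_2)$, where only the degeneracy at $0$ is active, I would apply Theorem \ref{Cor1} (in its version on a subinterval, as remarked after the theorem, with $f=-\beta(a,x)v(t,0,x)$ treated as a source term, together with the term $-\mu v$ already absorbed there); symmetrically, on $(\beta_1,1)$ I would apply Theorem \ref{Cor1'}. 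On the nondegenerate middle interval $(\beta_1,\beta_2)$ I would invoke a standard (nondegenerate) parabolic Carleman estimate in the variables $(t,x)$, carrying the $a$-transport term along as in the population-equation literature. Adding the three estimates with suitable cutoff functions $\xi_1,\xi_2$ subordinate to the covering, the interior boundary terms at $\beta_1$ and $\beta_2$ produced by Theorems \ref{Cor1}--\ref{Cor1'} are controlled by the middle Carleman estimate, and the remaining localized terms are supported in $\omega$; this yields a global Carleman estimate of the form
\[
\int_Q s^3\Theta^3 e^{2s\Phi}v^2\,dxdadt \le C\int_0^T\!\!\int_0^A\!\!\int_\omega s^3\Theta^3 e^{2s\Phi}v^2\,dxdadt,
\]
with $\Phi$ the glued weight that is bounded above and below on compact $t$- and $a$-subsets.

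\medskip

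The second ingredient is to handle the age variable and the nonlocal birth term. Since $T<A$, the characteristics $a-t=\text{const}$ matter: integrating the equation along characteristics and using $v(t,A,x)=0$, one propagates information backward in age. The role of Hypothesis \ref{conditionbeta} (namely $\beta\equiv0$ on $[0,\bar a]$) is exactly that the newborn term $\beta(a,x)v(t,0,x)$ is \emph{absent} for $a\le\bar a$, so on the strip $a\in(0,\bar a)$ the equation for $v$ is a genuine (degenerate) parabolic equation with no nonlocal coupling; there one may use Theorem \ref{theorem_existence}-type energy estimates freely. I would then: (i) use the global Carleman estimate restricted to a middle time-interval to bound $\int v^2$ on $\{t=t_0,\ a\in(\text{some range})\}$ by the observation region; (ii) use the $a$-transport and the well-posedness estimate \eqref{stimau} (Caccioppoli-type / energy arguments) to transfer this bound to the slice $t=T-\bar a$, picking up exactly the right-hand side $\int_0^\delta\!\!\int_0^1 v_T^2$ once one reaches the final-time data through the characteristics with $a\le\delta$; the restriction $\delta\in(T,A)$ is what guarantees these characteristics issued from $\{t=T-\bar a\}$ reach $t=T$ within the age band $(0,\delta)$. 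This part is essentially a rerun of the argument in \cite{fJMPA} (and \cite{aem}, \cite{em}) adapted to the divergence-form operator, so I would cite those results where possible rather than reprove them.

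\medskip

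The main obstacle, I expect, is the gluing step: combining the two one-sided degenerate Carleman estimates (one good near $0$, one good near $1$) with the interior nondegenerate estimate so that the boundary terms at the cut points $\beta_1,\beta_2$ cancel, while keeping track of the weights $\varphi$ and $\bar\varphi$ — which are \emph{different} functions — and replacing them by a single common weight $\Phi$ comparable to both on the overlap regions. One must check that $\Phi$ can be chosen so that $e^{2s\varphi}$, $e^{2s\bar\varphi}$ and $e^{2s\Phi}$ are mutually comparable on the relevant compact $x$-sets, and that the $x^2/k$ and $(x-1)^2/k$ weights are each bounded below by a positive constant away from their respective degeneracy point, so the degenerate lower-order terms become ordinary $L^2$ terms there. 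A secondary technical point is that $\beta$ is only continuous (not $C^2$), so the birth term must be treated purely as an $L^2$ source in the Carleman estimate and never differentiated — which is precisely why the cutoff technique advertised in the Introduction is used in place of the fixed-point/Leray–Schauder approach of \cite{em}. Once the global Carleman estimate is in hand, passing to \eqref{OI} is routine: restrict the time integration to a subinterval where $\Theta$ is bounded, drop the weights, and use the energy estimate to replace the resulting bound at an intermediate time by the bound at $t=T-\bar a$ on the left and by $v_T$ on $(0,\delta)$ on the right.
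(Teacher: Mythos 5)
Your strategy is essentially the paper's: localize in space with cutoff functions, apply the degenerate Carleman estimates of Theorems \ref{Cor1} and \ref{Cor1'} near the degeneracy points and the nondegenerate estimate of Theorem \ref{nondegenere} elsewhere, treat the birth term $\beta(a,x)v(t,0,x)$ as an $L^2$ source that is then expressed through $v_T$ by the characteristics formula, and use an energy estimate in $t$ to reach the slice $t=T-\bar a$. The paper packages exactly this as Theorems \ref{Cor2} and \ref{Cor12} (the $\omega$-local Carleman estimates), Theorem \ref{Theorem4.4}, Theorem \ref{CorOb1'}, and finally a density argument from $\mathcal W$ to $\mathcal U$; for the doubly degenerate case of Hypothesis \ref{ipok} it splits into two one-sided problems rather than running three regions at once, but that is a cosmetic difference.

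Two ingredients are missing from your sketch. First, the gluing mechanics: the paper's cutoffs vanish identically near the cut points, so no interior boundary terms arise at all; what does arise are commutator sources of the form $(k\xi_x v)_x+\xi_x k v_x$, supported in a subregion $\omega'\subset\subset\omega$ and involving $v_x$. Your claim that ``the remaining localized terms are supported in $\omega$'' is true but not enough: as written, the right-hand side of your global Carleman estimate would still carry $\int_{\omega'}v_x^2$, and reducing this to $\int_\omega v^2$ requires the Caccioppoli inequality (Proposition \ref{caccio}), which you never invoke. Second, the passage from the intermediate inequality of Theorem \ref{Theorem4.4} (which still contains $\int_0^T\int_0^\delta\int_0^1 v^2\,dx\,da\,dt$ on the right) to the final form \eqref{OI} is the content of Theorem \ref{CorOb1'}; your one-sentence characteristics argument points in the right direction, but this absorption step is precisely where $\delta\in(T,A)$, Hypothesis \ref{conditionbeta}, and the representation $v(t,0,\cdot)=S(T-t)v_T(T-t,\cdot)$ are actually used, and it cannot be dismissed as routine.
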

\begin{Remark}
\begin{enumerate}
\item
If $T= \bar a$, the observability inequality given in the previous proposition is the corresponding of \cite[Proposition 3.1]{aem}, where the authors proved it under different assumptions and with $ T \ge A$.
\item
Moreover, as in \cite{fJMPA}, observe that in \eqref{OI}  the presence   of the integral $\ds \int_0^\delta \int_0^1 v_T^2(a,x)dxda$ is related to the presence of the term $\beta(a,x) v(t,0,x)$ in the equation of \eqref{h=0}. In fact,  estimating such a term using the method of characteristic lines, we obtain the previous integral. Obviously, if $v_T(a,x)=0$ a.e. in $(0,\delta) \times (0,1)$, we obtain the classical observability inequality.
\end{enumerate}
\end{Remark}

\vspace{0.3cm}

Before proving Proposition \ref{obser.} we will give some
results that will be very helpful. As a first step we
introduce the following class of functions
\[
{\cal W}:=\Big\{ v\;\text{solution of \eqref{h=0}}\;\big|\;v_T \in
D(\mathcal A^2)\Big\}, \] where 
$D(\mathcal A^2) = \Big\{u \in
D(\mathcal A) \;\big|\; \mathcal A u \in
D(\mathcal A)\;\Big\}
$
is densely
defined in $D({\cal A})$ (see, for example, \cite[Lemma 7.2]{b}) and
hence in $L^2(Q_{A,1})$. 
Obviously,
\[\begin{aligned}
{\cal W}= C^1\big([0,T]\:;D(\mathcal A)\big)&\subset \mathcal{V}:=L^2\big(Q_{T,A}; H^2_k(0,1)\big) \cap H^1\big(0,T; H^1(0,A;H^1_k(0,1))\big)
 \subset \cal{U}.
\end{aligned}\]

\begin{Proposition}[Caccioppoli's inequality]\label{caccio} 
Let $\omega'$ and $\omega$ two open subintervals of $(0,1)$ such
that $\omega ' \subset\subset \omega\subset\subset (0,1)$. Let $\psi(t,a,x):=\Theta(t,a)\Psi(x)$, where $\Theta$ is defined in \eqref{571}
and $\Psi\in C^1(0,1)$  is a strictly negative function.
Then, there exist two  strictly  positive constants $C$ and $s_0$ such that, for all $s \ge s_0$,
\begin{equation}\label{caccioeq}
\begin{aligned}
\int_{0}^T\int_0^A \int _{\omega'}   v_x^2e^{2s\psi } dxdadt
\ &\leq \ C\left( \int_{0}^T\int_0^A \int _{\omega}   v^2  dxdadt + \int_Q f^2 e^{2s\psi } dxdadt\right),
\end{aligned}
\end{equation}
for every
solution $v$ of \eqref{adjoint}.
\end{Proposition}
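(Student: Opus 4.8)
The plan is to prove the Caccioppoli inequality \eqref{caccioeq} by the classical localization argument: introduce a smooth cut-off function that is identically $1$ on $\omega'$ and compactly supported in $\omega$, multiply the equation of \eqref{adjoint} by an appropriate test function involving $v$ and the weight $e^{2s\psi}$, and integrate over $Q$. First I would fix a function $\xi\in C^\infty_c(\omega)$ with $0\le\xi\le 1$ and $\xi\equiv 1$ on $\omega'$, and consider the quantity $\int_Q (e^{2s\psi}\xi^2 v)\left(\frac{\partial v}{\partial t}+\frac{\partial v}{\partial a}+(kv_x)_x-\mu v\right)dxdadt=\int_Q e^{2s\psi}\xi^2 v\, f\, dxdadt$. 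Since $\xi$ is supported away from the degeneracy points $x=0,1$, the coefficient $k$ is bounded below by a positive constant on $\supp\xi$, so all the terms involving $k$ are uniformly elliptic there and no degeneracy issue arises.

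The key steps are as follows. Integrating by parts in $x$ the term $\int_Q e^{2s\psi}\xi^2 v (kv_x)_x\,dxdadt$ produces $-\int_Q k\xi^2 e^{2s\psi} v_x^2\,dxdadt$ plus lower-order terms in which $v_x$ appears multiplied by $v$ times derivatives of $\xi$ or of $\psi$ (the latter of the form $\psi_x=\Theta\Psi'$, which is bounded on $\supp\xi$); these mixed terms are absorbed by Young's inequality, paying a small fraction of $\int_Q k\xi^2 e^{2s\psi}v_x^2$ against a large constant times $\int_Q e^{2s\psi}\xi'^2 v^2$ or $\int_Q s^2\Theta^2 e^{2s\psi}v^2$. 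The time and age derivative terms $\int_Q e^{2s\psi}\xi^2 v\,(v_t+v_a)\,dxdadt=\frac12\int_Q e^{2s\psi}\xi^2(v^2)_t+(v^2)_a\,dxdadt$ are integrated by parts in $t$ and $a$ respectively: the boundary contributions at $t=0,T$ and at $a=0,A$ vanish (using $v(T,a,x)$ is controlled, $v(t,A,x)=0$, and that $e^{2s\psi}\to 0$ as $t\to 0^+,T^-$ and $a\to 0^+$ since $\psi=\Theta\Psi$ with $\Psi<0$ and $\Theta\to+\infty$), leaving $-\frac12\int_Q (e^{2s\psi}\xi^2)_t v^2 - \frac12\int_Q(e^{2s\psi}\xi^2)_a v^2$, i.e.\ terms of the form $s\Theta_t$ or $s\Theta_a$ times $e^{2s\psi}\xi^2 v^2$. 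Using the bounds $|\Theta\Theta_t|\le c\Theta^3$, $|\Theta\Theta_a|\le c\Theta^3$ from \eqref{magtheta} and the fact that $\Psi$ is bounded on $\supp\xi$, all of these are dominated by $C\int_Q e^{2s\psi}\xi^2 v^2\,dxdadt$, and since $\Theta^3 e^{2s\psi}$ is bounded (indeed tends to $0$ at the singular times) and $\xi$ is supported in $\omega$, this last integral is $\le C\int_0^T\int_0^A\int_\omega v^2\,dxdadt$. Finally the term $\int_Q e^{2s\psi}\xi^2 v f\,dxdadt$ is split by Young's inequality into $\le \int_Q e^{2s\psi}\xi^2 v^2 + \frac14\int_Q e^{2s\psi}f^2$, the first again absorbed into $\int_0^T\int_0^A\int_\omega v^2$. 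Collecting everything, $\int_Q k\xi^2 e^{2s\psi}v_x^2\,dxdadt\le C(\int_0^T\int_0^A\int_\omega v^2 + \int_Q f^2 e^{2s\psi})$, and since $k\ge k_0>0$ and $\xi\equiv 1$ on $\omega'$, the left-hand side bounds $k_0\int_0^T\int_0^A\int_{\omega'}v_x^2 e^{2s\psi}$, which gives \eqref{caccioeq}.

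The main obstacle, and the point requiring care, is the bookkeeping of the lower-order terms generated by the two integrations by parts: one must check that every term in which $v_x$ survives can be paired, via Young's inequality with a well-chosen weight, against the single good negative term $-\int_Q k\xi^2 e^{2s\psi}v_x^2$ without the bad constants blowing up with $s$ — this works because the $s$-dependent factors ($s\psi_x=s\Theta\Psi'$, $s^2\psi_x^2=s^2\Theta^2\Psi'^2$) are all multiplied by $e^{2s\psi}$ which decays super-polynomially at the singular times and is bounded elsewhere on $\supp\xi$, so that products like $s^2\Theta^2 e^{2s\psi}$ remain bounded uniformly in $s\ge s_0$. A second minor subtlety is justifying the vanishing of the boundary terms in $t$ and $a$: this uses that $v\in\mathcal U$ together with the behaviour $\psi(t,a,x)\to-\infty$ as $t\to 0^+,T^-$ and $a\to 0^+$, so $e^{2s\psi}$ and all its relevant derivatives vanish there, while at $a=A$ one uses the homogeneous condition $v(t,A,x)=0$ from \eqref{adjoint}. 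Once these estimates are in place the conclusion is immediate; this is exactly the scheme of the classical Caccioppoli inequality adapted to the degenerate-parabolic setting with the age variable, and no genuinely new difficulty beyond careful constant-tracking appears.
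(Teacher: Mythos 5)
Your proposal is correct and follows essentially the same route as the paper's proof in the Appendix: the paper also fixes a cut-off $\xi$ equal to $1$ on $\omega'$ and supported in $\omega$, writes the identity $0=\int_0^T\frac{d}{dt}\bigl(\int_0^A\int_0^1(\xi e^{s\psi})^2v^2\,dxda\bigr)dt$ (which is just your "multiply by $e^{2s\psi}\xi^2 v$ and integrate by parts in $t$" in disguise), substitutes the equation, integrates by parts in $a$ and $x$, and absorbs the mixed terms by Young's inequality before dividing by $\inf_{\omega'}k>0$. Your additional remarks on the uniformity in $s$ of quantities like $s^2\Theta^2e^{2s\psi}$ and on the vanishing of the $t$- and $a$-boundary terms are exactly the (implicit) justifications the paper relies on.
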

The proof of the previous proposition is similar to the one given in \cite{fJMPA}, but we repeat it in the Appendix  for the reader's convenience.

Moreover, the following non degenerate inequality proved in \cite{fJMPA} is crucial:
\begin{Theorem}\label{nondegenere}[see \cite[Theorem 3.2]{fJMPA}]
Let $z\in 
\mathcal{Z}$ be the solution of
\eqref{adjoint},
where $f \in L^{2}(Q)$, $k \in C^{1}([0,1])$ is a strictly
positive function and
\[\mathcal{Z}:=L^2\big(Q_{T,A}; H^2(0,1)\cap H^1_0(0,1)\big) \cap H^1\big(0,T; H^1(0,A;H^1_0(0,1))\big).\]  Then, there exist two strictly positive constants $C$ and $s_0$,
such that, for any $s\geq s_0$, $z$ satisfies the estimate
\begin{equation} \label{570'}
\begin{aligned}
&\int_{Q}(s^{3}\phi^{3}z^{2}+s\phi z_{x}^{2})e^{2s\Phi} dxdadt \leq C \Big(\int_{Q}f^{2}e^{2s\Phi}dxdadt
  -
s\kappa\int_0^T\int_0^A\left[ke^{2s\Phi}\phi(z_x)^2
\right]_{x=0}^{x=1}dadt\Big),
\end{aligned}
\end{equation}
where the functions $\phi$ and $\Phi$ are
defined as follows
\begin{equation}\label{571}
\begin{gathered}
\phi(t,a,x)=\Theta(t,a)e^{\kappa\sigma(x)}, \quad
\Theta(t, a)= \frac{1}{t^{4}(T-t)^{4}a^{4}},\\
\Phi(a,t,x)=\Theta(t,a)\Psi(x), \quad
\Psi(x)=e^{\kappa\sigma(x)}-e^{2\kappa\|\sigma\|_{\infty}},
\end{gathered}
\end{equation}
$(t,a,x)\in Q$, $\kappa>0$ and $\sigma (x) :=\mathfrak{d}\int_x^1\frac{1}{k(t)}dt$, where $\fd=\|k'\|_{L^\infty(0,1)}$.
\end{Theorem}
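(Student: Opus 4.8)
\emph{Proof strategy.} Since $k$ is strictly positive and $C^1$ on $[0,1]$, the operator $z\mapsto z_t+z_a+(kz_x)_x$ is uniformly parabolic in $x$, so \eqref{570'} is a classical Fursikov--Imanuvilov Carleman estimate, the only new features being the age--transport term $\partial_a$ and the age--dependent factor $\Theta(t,a)=1/(t^4(T-t)^4a^4)$ in the weights. The plan is to treat $\partial_a$ exactly as the time derivative $\partial_t$: the weight $\Phi=\Theta\Psi$ tends to $-\infty$ not only as $t\to0^+,T^-$ but also as $a\to0^+$, while at $a=A$ the condition $z(t,A,x)=0$ kills the corresponding trace. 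I would first establish the estimate for smooth solutions and extend to all of $\mathcal Z$ by density, and I would reduce to $\mu\equiv0$: as $\mu\in C(\bar Q)$ is bounded, setting $\bar f:=f+\mu z$ gives $|\bar f|^2\le2|f|^2+2\|\mu\|_\infty^2z^2$, and the extra $\int_Qz^2e^{2s\Phi}$ is absorbed by the term $s^3\int_Q\phi^3z^2e^{2s\Phi}$ on the left for $s$ large, since $\phi=\Theta e^{\kappa\sigma}$ is bounded below.

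Next I would conjugate by setting $w:=e^{s\Phi}z$, so that $w$ obeys homogeneous Dirichlet conditions at $x=0,1$, vanishes at $a=A$, and has vanishing relevant traces at $t=0,T$ and $a=0$. Writing $L_sw:=e^{s\Phi}L(e^{-s\Phi}w)$ with $Lz:=z_t+z_a+(kz_x)_x$, a direct computation gives $L_sw=L_s^+w+L_s^-w$ with
\[
L_s^+w=(kw_x)_x+s^2k\Phi_x^2w-s(\Phi_t+\Phi_a)w,\qquad
L_s^-w=w_t+w_a-2sk\Phi_xw_x-s(k\Phi_x)_xw.
\]
From $\|L_s^+w\|^2+\|L_s^-w\|^2+2\langle L_s^+w,L_s^-w\rangle_{L^2(Q)}=\|e^{s\Phi}\bar f\|^2$ one obtains $2\langle L_s^+w,L_s^-w\rangle_{L^2(Q)}\le\int_Q\bar f^2e^{2s\Phi}$, so it suffices to bound the distributed part of the inner product from below and to control its boundary part.

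The heart of the argument is to show that, after integrating by parts, the distributed terms dominate $c\big(s^3\int_Q\phi^3w^2+s\int_Q\phi w_x^2\big)$. Here the choice $\Psi=e^{\kappa\sigma}-e^{2\kappa\|\sigma\|_\infty}$ with $\sigma'=-\fd/k$ is used: one has $\Phi_x=\kappa\sigma'\phi=-\kappa\fd\,\phi/k$, so the leading $s^3$--term $s^3\int_Q(2k\Phi_{xx}+k'\Phi_x)k\Phi_x^2w^2$ and the leading $s$--term $s\int_Q(2k^2\Phi_{xx}+kk'\Phi_x)w_x^2$ become positive--dominant once $\kappa$ is fixed large enough, because the exponential makes $\Phi_{xx}$ of order $\kappa^2$. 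All remaining volume terms carry factors $\Theta_{tt},\Theta_{aa},\Theta_{ta},\Theta\Theta_t,\Theta\Theta_a$ or mixed space--time/age derivatives of $\Phi$; by the bounds \eqref{magtheta} (valid for the same $\Theta$) they are of strictly lower order and are reabsorbed by Young's inequality into the two leading terms for $s\ge s_0$.

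Finally I would dispose of the boundary terms. At $a=A$ ($w=0$) and at $t=0,T$, $a=0$ (vanishing traces) all contributions vanish; at $x=0,1$ every term containing $w$ itself vanishes by the Dirichlet condition, leaving only $-s\int_0^T\int_0^A[\Phi_x(kw_x)^2]_{x=0}^{x=1}$, which, using $\Phi_x=-\kappa\fd\,\phi/k$ and $w_x=e^{s\Phi}z_x$ on the lateral boundary, reproduces exactly the term $-s\kappa\int_0^T\int_0^A[ke^{2s\Phi}\phi(z_x)^2]_{x=0}^{x=1}$ of \eqref{570'} (up to a harmless constant). Undoing $w=e^{s\Phi}z$, $w_x=e^{s\Phi}(z_x+s\Phi_xz)$ turns the lower bound in $w$ into the claimed one in $z$, the cross and $s^2\Phi_x^2z^2$ contributions being reabsorbed by the $s^3\phi^3$ term. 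The main obstacle is the bookkeeping in the inner--product computation: compared with the single--time classical case the two transport directions $\partial_t$ and $\partial_a$ produce a doubled family of first--order distributed and boundary terms, and one must verify that the cross terms coupling $t$ and $a$ (those in $\Theta_{ta}$ and $\Phi_x\Phi_{xa}$) stay subordinate to the leading terms. Since $k$ is non-degenerate, no Hardy--Poincar\'e singular weights intervene, which is exactly what makes \eqref{570'} the baseline on which the degenerate Theorems \ref{Cor1} and \ref{Cor1'} are built.
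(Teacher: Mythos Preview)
Your proposal is correct and follows the standard Fursikov--Imanuvilov scheme; in particular, the conjugation $w=e^{s\Phi}z$, the splitting $L_s=L_s^++L_s^-$, the use of $\Phi_x=-\kappa\fd\,\phi/k$ to produce positive leading terms, and the absorption of the $(t,a)$--cross terms via \eqref{magtheta} are exactly the right ingredients. Note however that the paper does \emph{not} prove Theorem~\ref{nondegenere}: it is quoted verbatim from \cite[Theorem~3.2]{fJMPA} and used as a black box in the proof of Theorem~\ref{Cor2}. The paper's own Carleman argument is carried out only in the degenerate setting (Theorem~\ref{Cor1}), where the template is the same as yours but with the weight $\varphi=\Theta(p-2\|p\|_\infty)$, $p(x)=\int_0^x y/k(y)\,dy$, in place of the exponential $\Psi=e^{\kappa\sigma}-e^{2\kappa\|\sigma\|_\infty}$; the exponential choice is what allows, in the non-degenerate case, the $\kappa$-large-parameter trick to make $2k\Phi_{xx}+k'\Phi_x>0$ without any structural assumption on $k$ beyond strict positivity. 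So your sketch is in fact a faithful reconstruction of the proof in \cite{fJMPA}, and it mirrors step by step the degenerate proof given here (Lemmas~\ref{lemma1}--\ref{BT} and Proposition~\ref{stima}).
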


\begin{Remark}
The previous Theorem still holds under the weaker assumption $k \in W^{1, \infty}(0,1)$ without any additional assumption. 
\\
On the other hand, if we require $k \in W^{1,1}(0,1)$ then we have to add the following hypothesis:
{\it  there exist two functions $\fg \in L^1(0,1)$,
$\fh \in W^{1,\infty}(0,1)$ and two strictly positive constants
$\fg_0$, $\fh_0$ such that $\fg(x) \ge \fg_0$ and
\[-\frac{k'(x)}{2\sqrt{k(x)}}\left(\int_x^1\fg(t) dt + \fh_0 \right)+ \sqrt{k(x)}\fg(x) =\fh(x)\quad \text{for a.e.} \; x \in [0,1].\]}
\\
In this case, i.e. if $k \in W^{1,1}(0,1)$,  the function $\Psi$ in \eqref{571} becomes
\begin{equation}\label{Psi_new}
\Psi(x):= - r\left[\int_0^x
\frac{1}{\sqrt{k(t)}} \int_t^1
\fg(s) dsdt + \int_0^x \frac{\fh_0}{\sqrt{k(t)}}dt\right] -\mathfrak{c}, 
\end{equation}
where $r$ and $\mathfrak{c}$ are suitable strictly positive functions. For other comments on Theorem \ref{nondegenere} we refer to \cite{fJMPA}.
\end{Remark}

With the aid of Theorems \ref{Cor1}, \ref{Cor1'}, \ref{nondegenere} and Proposition \ref{caccio}, we can now show $\omega-$local Carleman estimates  for \eqref{adjoint}.

\begin{Theorem}\label{Cor2} Assume Hypothesis $\ref{BAss01}$. Then,
there exist two  strictly positive constants $C$ and $s_0$ such that every
solution $v$ of \eqref{adjoint} in
$
\mathcal {V}
$ 
satisfies, for all $s \ge s_0$,
\[
\begin{aligned}
\int_{Q}\left(s \Theta k  v_x^2
                + s^3\Theta^3\text{\small$\displaystyle \frac{x^2}{k}$\normalsize}
                  v^2\right)e^{2s\varphi}dxdadt
&\le
C\left(\int_{Q}f^{2}\text{\small$e^{2s\Phi}$\normalsize}~dxdadt+ \int_0^T \int_0^A\int_ \omega v^2 dx dadt\right).
\end{aligned}\]
\end{Theorem}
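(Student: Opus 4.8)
The idea is the classical one of gluing together a degenerate Carleman estimate near the degeneracy point with a non-degenerate one away from it, using the Caccioppoli inequality (Proposition~\ref{caccio}) to absorb the interior gradient terms into an $L^2$ observation on $\omega$. Write $\omega=(\alpha,\rho)$ and fix points $\alpha<\alpha'<\rho'<\rho$, so that $(0,1)$ is split into the ``degenerate part'' $(0,\rho')$, on which Hypothesis~\ref{BAss01} is in force, and the ``regular part'' $[\alpha',1)$, on which $k$ is bounded below by a positive constant, hence smooth and strictly positive. Introduce a smooth cut-off $\xi\colon[0,1]\to[0,1]$ with $\xi\equiv 1$ on $[0,\alpha']$ and $\xi\equiv 0$ on $[\rho',1]$, so that $\supp\xi'\subset[\alpha',\rho']\subset\subset\omega$.

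\medskip
\noindent\emph{Step 1: estimate on the degenerate region.} Apply Theorem~\ref{Cor1} to $w:=\xi v$, which solves \eqref{adjoint} on $(0,T)\times(0,A)\times(0,\rho')$ with right-hand side $f\xi + (k\xi')_x v + 2k\xi' v_x$ — the last two terms being supported in $[\alpha',\rho']$. Since $k$ and $\xi'$ are bounded on that compact set, the extra source terms are controlled by $C\int_Q\chi_{[\alpha',\rho']}(v^2+v_x^2)e^{2s\varphi}$. Moreover on $[\alpha',\rho']$ one has $x^2/k\le C$ and $\varphi$, $\Theta$ are comparable to the non-degenerate weights $\Phi$, $\phi$ up to constants, so the boundary term $[\,k v_x^2 e^{2s\varphi}\,](t,a,\rho')$ and the interior terms coming from the cut-off are all dominated by $C\int_0^T\!\!\int_0^A\!\int_\omega v^2\,dx\,da\,dt$ \emph{plus} a multiple of $\int_0^T\!\!\int_0^A\!\int_{[\alpha',\rho']}v_x^2 e^{2s\varphi}$, which in turn is handled by Caccioppoli (Proposition~\ref{caccio}) applied with $\psi=\varphi$ restricted appropriately. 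This yields
\[
\int_0^T\!\!\int_0^A\!\int_0^{\alpha'}\!\!\Big(s\Theta k v_x^2+s^3\Theta^3\tfrac{x^2}{k}v^2\Big)e^{2s\varphi}\,dx\,da\,dt
\le C\Big(\int_Q f^2 e^{2s\Phi}\,dx\,da\,dt+\int_0^T\!\!\int_0^A\!\int_\omega v^2\Big).
\]

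\medskip
\noindent\emph{Step 2: estimate on the regular region.} On $[\alpha',1)$, $k$ is strictly positive, so apply the non-degenerate Carleman estimate Theorem~\ref{nondegenere} to $\tilde w:=(1-\xi)v$ (or a second cut-off equal to $1$ on $[\rho',1]$ and $0$ on $[0,\alpha']$), which solves \eqref{adjoint} on $(\alpha',1)$ with a right-hand side again of the form $f\cdot(1-\xi)+(\text{terms supported in }[\alpha',\rho']\subset\subset\omega)$. The boundary terms at $x=1$ vanish because $v(t,a,1)=0$ and the weight $\phi$ together with $k(1)=0$-free regularity make $[k\phi e^{2s\Phi}v_x^2](t,a,1)$ under control — actually since $k$ is bounded below on $[\alpha',1]$ we are in the standard non-degenerate situation and this term is simply $\le 0$ after the right choice of sign, while the one at $x=\alpha'$ is again absorbed via Caccioppoli. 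One obtains the analogous estimate over $[\rho',1)$ with the non-degenerate weights, and since on the overlap $[\alpha',\rho']$ all weights are mutually comparable, the two pieces can be patched.

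\medskip
\noindent\emph{Step 3: gluing and the interior band.} Add the estimates of Steps~1--2 and note that the still-missing band $[\alpha',\rho']\subset\subset\omega$ contributes, for the $v^2$ term, at most $C\int_0^T\!\!\int_0^A\!\int_\omega v^2$ directly (the weight $\Theta^3 x^2/k$ being bounded there), and for the $v_x^2$ term, $C\int_0^T\!\!\int_0^A\!\int_{[\alpha',\rho']}v_x^2 e^{2s\varphi}$, which Proposition~\ref{caccio} bounds by $C(\int_\omega v^2+\int_Q f^2 e^{2s\psi})$. Absorbing $e^{2s\psi}\le e^{2s\Phi}$ (after possibly enlarging $\|\Psi\|_\infty$) and summing over the three regions gives exactly the claimed inequality, with the weight $e^{2s\varphi}$ on the left (which is the degenerate weight; on the regular part $e^{2s\varphi}$ and $e^{2s\Phi}$ are comparable, so replacing the regular-part weight by $\varphi$ on the left is legitimate) and $e^{2s\Phi}$ on the right.

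\medskip
\noindent The main obstacle is bookkeeping the \emph{compatibility of the three weight functions} $\varphi$, $\bar\varphi$ (if one also uses the degeneracy-at-$1$ piece) and $\Phi$ on the overlap regions: one must check that on each compact subinterval away from $0$ and $1$ they differ only by multiplicative constants (uniformly in $(t,a)$, thanks to the common factor $\Theta$), so that switching between them costs only a change in the constant $C$ and in the threshold $s_0$. A minor secondary point is verifying that the cut-off source terms $(k\xi')_x v$ and $k\xi' v_x$ really are supported in $\omega$ and have the regularity needed to apply Theorems~\ref{Cor1} and \ref{nondegenere} (this follows since $v\in\mathcal V$ and $\xi\in C^\infty_c$). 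Everything else is routine.
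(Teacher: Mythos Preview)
Your overall strategy---localize with cut--offs, apply the degenerate Carleman estimate (Theorem~\ref{Cor1}) near $x=0$, the non--degenerate one (Theorem~\ref{nondegenere}) away from $0$, and use Caccioppoli for the commutator terms---is exactly the route the paper takes, and Steps~1 and~2 are essentially correct (minor point: in Step~1 the boundary term at $x=\rho'$ is actually \emph{zero}, since $w=\xi v$ vanishes identically near $\rho'$; you do not need to ``dominate'' it).

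The genuine gap is in Step~3. You claim that on the middle band $[\alpha',\rho']$ the weight ``$\Theta^3 x^2/k$ is bounded'' and that the $v_x^2$--contribution is at most $C\int_{[\alpha',\rho']}v_x^2 e^{2s\varphi}$. Neither is correct: $\Theta=\Theta(t,a)$ blows up at the parabolic boundary, so $\Theta^3$ (and $s\Theta$) are \emph{not} bounded. What is true is that $s^3\Theta^3 e^{2s\varphi}\le C$ uniformly on the band, which does handle the $v^2$ term; but for the $v_x^2$ term this only gives
\[
\int_{[\alpha',\rho']} s\Theta\,k\,v_x^2\,e^{2s\varphi}\;\le\;C\int_{[\alpha',\rho']} v_x^2,
\]
an \emph{unweighted} integral, and Proposition~\ref{caccio} bounds $\int_{\omega'} v_x^2 e^{2s\psi}$, not $\int_{\omega'} v_x^2$. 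Since $e^{2s\psi}\to 0$ as $\Theta\to\infty$, you cannot recover the unweighted bound from Caccioppoli.

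The paper closes this gap by introducing a \emph{third} cut--off $\tau$, equal to $1$ on the middle band and supported in a slightly larger interval still contained in $\omega$, and applying the non--degenerate Carleman estimate to $\zeta=\tau v$. This yields directly
\[
\int_{[\alpha',\rho']}\bigl(s\phi\,\zeta_x^2+s^3\phi^3\zeta^2\bigr)e^{2s\Phi}\,dx\,da\,dt
\;\le\;C\Bigl(\int_Q f^2 e^{2s\Phi}+\int_\omega v^2\Bigr),
\]
with the correct powers of $\Theta$ already built in; the weight comparison $e^{2s\varphi}\le\varsigma\,e^{2s\Phi}$ and $\Theta\sim\phi$ on the compact band then converts this to the degenerate weights on the left. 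So the fix is not hard, but your Step~3 as written does not go through.
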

\begin{proof}
Let us consider a smooth function $\xi: [0,1] \to \Bbb R$ such that
  $$\begin{cases}
    0 \leq \xi (x)  \leq 1, &  \text{ for all } x \in [0,1], \\
    \xi (x) = 1 ,  &   x \in [0, (2\alpha +\rho)/3], \\
    \xi (x)=0, &     x \in [(\alpha +2\rho)/3,1].
    \end{cases}$$
    We define $w(t,a,x):= \xi(x)v(t,a, x)$ where $v\in\cal{V}$ satisfies \eqref{adjoint}.
    Then $w$   satisfies
    \[
    \begin{cases}
      w_t +w_a+(k  w_{x})_x- \mu w= \xi f + (k\xi_xv)_x+\xi_xkv_x=:h,&
      (t,a, x) \in Q,
      \\[5pt]
  w(t,a,0)=  w(t,a,1)=0, & (t,a) \in Q_{T,A}.
    \end{cases}
    \]
    Thus, applying Theorem \ref{Cor1} and Proposition \ref{caccio},
    \begin{equation}\label{add1}
    \begin{aligned}
&\int_0^T\int_0^A \int_0^{\frac{2\alpha+ \rho}{3}}\left(s \Theta k v_x^2
                + s^3\Theta^3\text{\small$\displaystyle\frac{x^2}{k}$\normalsize}
                  v^2\right)e^{2s\varphi}dxdadt \\
                  &=\int_0^T\int_0^A \int_0^{\frac{2\alpha+ \rho}{3}}\left(s \Theta k w_x^2
                + s^3\Theta^3\text{\small$\displaystyle\frac{x^2}{k}$\normalsize}
                  w^2\right)e^{2s\varphi}dxdadt\\
&\le
                  \int_{Q}\left(s \Theta k  w_x^2
                + s^3\Theta^3\text{\small$\displaystyle\frac{x^2}{k}$\normalsize}
                  w^2\right)e^{2s\varphi}dxdadt
\le
C\int_{Q}h^{2}\text{\small$e^{2s\varphi}$\normalsize}~dxdadt
\\&\le C\left( \int_{Q} f^2 e^{2s\varphi}dxdadt + \int_{0}^T\int_0^A \int _{\omega'}   v^2  dxdadt + \int_{0}^T\int_0^A \int _{\omega'}  v_x^2 e^{2s\varphi} dxdadt \right)
\\&
\le C \left( \int_{Q} f^2e^{2s\varphi} dxdadt+ \int_{0}^T\int_0^A \int _{\omega} v^2 dxdadt \right),
\end{aligned}
\end{equation}
where $\omega':= \ds\left(\frac{2\alpha +\rho}{3}, \frac{\alpha +2\rho}{3}\right)$.

Now, consider $z= \eta v$, where $\eta = 1-\xi$ and take $\bar \alpha \in (0, \alpha)$. Then $z$   satisfies
    \begin{equation}\label{problemz}
    \begin{cases}
      z_t +z_a+(k z_{x})_x- \mu z= \eta f +(k\eta_xv)_x+\eta_xkv_x=:h,&
      (t,a, x) \in Q_{T,A}\times (\bar \alpha, 1)=: \bar Q,
      \\[5pt]
  z(t,a,\bar\alpha)=  z(t,a,1)=0, & (t,a) \in Q_{T,A}.
    \end{cases}
    \end{equation}
    Clearly the equation satisfied by $z$ is not degenerate, thus applying Theorem \ref{nondegenere} and Proposition \ref{caccio}, one has
    \[
    \begin{aligned}
&\int_{\bar Q}(s^{3}\phi^{3}z^{2}+s\phi z_{x}^{2})e^{2s\Phi} dxdadt \leq C \int_{\bar Q}h^{2}e^{2s\Phi}dxdadt
\\
 &\le C\left( \int_{\bar Q}f^2 e^{2s\Phi}dxdadt + \int_0^T\int_0^A \int_{\omega'} (v^2 + v_x^2)e^{2s\Phi}dxdadt\right)\\
 &\le C \left( \int_Qf^{2}e^{2s\Phi}dxdadt+  \int_0^T\int_0^A \int_{\omega}v^2dxdadt\right).
\end{aligned}
    \]
    Hence
    \[ 
      \begin{aligned}
  &  \int_0^T\int_0^A \int_{\frac{\alpha+ 2\rho}{3}}^1   (s^{3}\phi^{3}v^{2}+s\phi v_{x}^{2})e^{2s\Phi} dxdadt=
 \int_0^T\int_0^A \int_{\frac{\alpha+ 2\rho}{3}}^1   (s^{3}\phi^{3}z^{2}+s\phi z_{x}^{2})e^{2s\Phi} dxdadt\\
 &\le C \left( \int_Qf^{2}e^{2s\Phi}dxdadt+  \int_0^T\int_0^A \int_{\omega} v^2dxdadt\right),
 \end{aligned}
    \]
    for a  strictly positive constant $C$.
Proceeding, for example, as in \cite{fm1}  one can prove the existence of $\varsigma>0$, such that,  for
all $(t,a,x)\in [0,T]\times[0,A]\times[\bar \alpha,1]$, we have
\begin{equation}\label{stimaphi}
e^{2s\varphi}\leq\varsigma e^{2s\Phi},
\frac{x^2}{k(x)}e^{2s\varphi}\leq\varsigma e^{2s\Phi}.
\end{equation}
Thus, for a  strictly  positive constant $C$,
   \begin{equation}\label{add2}
    \begin{aligned}
&\int_0^T\int_0^A \int_{\frac{\alpha+ 2\rho}{3}}^1 \left(s \Theta k v_x^2
                + s^3\Theta^3\frac{x^2}{k}
                  v^2\right)e^{2s\varphi}dxdadt \\
                  &\le C\left( \int_0^T\int_0^A \int_{\frac{\alpha+ 2\rho}{3}}^1   (s^{3}\phi^{3}v^{2}+s\phi v_{x}^{2})e^{2s\Phi} dxdadt\right)\\
                 &
\le C \left( \int_{Q} f^2 e^{2s\Phi} dxdadt+ \int_{0}^T\int_0^A \int _{\omega}  v^2  dxdadt \right).
\end{aligned}
\end{equation}
Now, consider $\tilde \alpha \in (\alpha, (2\alpha +\rho)/3)$, $\tilde \rho \in ((\alpha +2\rho)/3, \rho)$ and a smooth function $\tau: [0,1] \to \Bbb R$ such that
  $$\begin{cases}
    0 \leq \tau (x)  \leq 1, &  \text{ for all } x \in [0,1], \\
    \tau(x) = 1 ,  &   x \in [ (2\alpha +\rho)/3, (\alpha +2\rho)/3], \\
    \tau (x)=0, &     x \in [0, \tilde \alpha] \cup [\tilde \rho,1],
    \end{cases}$$
    and define $\zeta(t,a,x) := \tau(x)v(t,a,x)$. Clearly, $\zeta$ satisfies \eqref{problemz} with $h:= \tau f + (k \tau_{x} v)_x+ \tau_xkv_x$. Observe that in this case $\tau_ x, \tau_{xx} \not \equiv 0$ in $\bar\omega:= \ds\left(\tilde\alpha, \frac{2\alpha +\rho}{3}\right) \cup \left( \frac{\alpha +2\rho}{3}, \tilde\rho\right)$. As before, by Theorem \ref{nondegenere}, Proposition \ref{caccio} and \eqref{stimaphi}, we have
      \begin{equation}\label{add3}
    \begin{aligned}
&\int_0^T\int_0^A \int_{\frac{2\alpha+ \rho}{3}}^{\frac{\alpha+ 2\rho}{3}}\left(s \Theta k v_x^2
                + s^3\Theta^3\frac{x^2}{k}
                  v^2\right)e^{2s\varphi}dxdadt \\
                  &\le C\left( \int_0^T\int_0^A \int_{\frac{2\alpha+ \rho}{3}}^{\frac{\alpha+ 2\rho}{3}} (s^{3}\phi^{3}v^{2}+s\phi v_{x}^{2})e^{2s\Phi} dxdadt\right)\\
               &   = C\left(\int_0^T\int_0^A \int_{\frac{2\alpha+ \rho}{3}}^{\frac{\alpha+ 2\rho}{3}}   (s^{3}\phi^{3}\zeta^{2}+s\phi \zeta_{x}^{2})e^{2s\Phi} dxdadt\right)
                  \\
                 &
\le C \left( \int_{Q} f^2 e^{2s\Phi} dxdadt+ \int_{0}^T\int_0^A \int _{\omega}v^2 dxdadt \right).
\end{aligned}
\end{equation}
Adding \eqref{add1}, \eqref{add2} and \eqref{add3}, the thesis follows.
\end{proof}
Proceeding as before one can prove

\begin{Theorem}\label{Cor12} Assume Hypothesis $\ref{BAss02}$. Then,
there exist two  strictly positive constants $C$ and $s_0$ such that every
solution $v$ of \eqref{adjoint} in
$
\mathcal {V}
$
satisfies, for all $s \ge s_0$,
\begin{equation}\nonumber
\begin{aligned}
\int_{Q}\left(s \Theta k v_x^2
                + s^3\Theta^3\frac{(1-x)^2}{k}
                  v^2\right)e^{2s\varphi}dxdadt
\le
C\left(\int_{Q}f^{2}e^{2s\Phi}dxdadt+ \int_0^T \int_0^A\int_ \omega v^2 dx dadt\right).
\end{aligned}\end{equation}
\end{Theorem}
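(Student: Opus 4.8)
}
The plan is to argue exactly as in the proof of Theorem~\ref{Cor2}, with the two endpoints $0$ and $1$ interchanged: near the degeneracy point $x=1$ one invokes the degenerate estimate of Theorem~\ref{Cor1'} (with the weight $\bar\varphi$ of \eqref{13'} in place of $\varphi$), away from it the non degenerate estimate of Theorem~\ref{nondegenere} and Caccioppoli's inequality of Proposition~\ref{caccio}. I would localize by three cut off functions. First, take a smooth $\xi\colon[0,1]\to[0,1]$ with $\xi\equiv 0$ on $\bigl[0,\tfrac{2\alpha+\rho}{3}\bigr]$ and $\xi\equiv 1$ on $\bigl[\tfrac{\alpha+2\rho}{3},1\bigr]$, and set $w:=\xi v$; then $w$ solves \eqref{adjoint} on $(0,T)\times(0,A)\times(0,1)$ with right hand side $h:=\xi f+(k\xi_x v)_x+\xi_x k v_x$, whose extra terms are supported in $\omega':=\bigl(\tfrac{2\alpha+\rho}{3},\tfrac{\alpha+2\rho}{3}\bigr)\subset\subset\omega$, and $w\equiv v$ near $x=1$. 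Since $w\equiv 0$ near $x=0$, the $x=0$ boundary term in Theorem~\ref{Cor1'} vanishes when applied to $w$; so, estimating $\int_Q h^2 e^{2s\bar\varphi}\,dxdadt$ by $\int_Q f^2 e^{2s\bar\varphi}\,dxdadt$ plus $\omega'$-localized terms and absorbing the latter via Proposition~\ref{caccio}, I bound $\int_0^T\!\int_0^A\!\int_{(\alpha+2\rho)/3}^{1}\bigl(s\Theta k v_x^2+s^3\Theta^3\tfrac{(1-x)^2}{k}v^2\bigr)e^{2s\bar\varphi}\,dxdadt$ by the desired right hand side.

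Second, put $z:=(1-\xi)v$ and fix $\bar\rho\in(\rho,1)$; then $z$ is supported in $[0,\bar\rho]$, vanishes at $x=0$ and identically near $x=\bar\rho$, and solves a \emph{non degenerate} problem on $(0,T)\times(0,A)\times(0,\bar\rho)$, because $k>0$ on $[0,\bar\rho]$ under Hypothesis~\ref{BAss02}. Applying the localized form of Theorem~\ref{nondegenere} on $(0,\bar\rho)$ together with Proposition~\ref{caccio}, one controls $\int_0^T\!\int_0^A\!\int_0^{(2\alpha+\rho)/3}\bigl(s^3\phi^3 v^2+s\phi v_x^2\bigr)e^{2s\Phi}\,dxdadt$ by the right hand side, where $\phi,\Phi$ are built as in \eqref{571} from a function $\sigma$ adapted to $(0,\bar\rho)$. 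A third cut off $\tau$, equal to $1$ on $\bigl[\tfrac{2\alpha+\rho}{3},\tfrac{\alpha+2\rho}{3}\bigr]$ and compactly supported in a slightly larger subinterval of $\omega$, gives $\zeta:=\tau v$, which again solves a non degenerate problem but with no boundary terms; Theorem~\ref{nondegenere} and Proposition~\ref{caccio} then control the integral over $\bigl[\tfrac{2\alpha+\rho}{3},\tfrac{\alpha+2\rho}{3}\bigr]$.

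To glue the three pieces, on the compact set $\{x\le\bar\rho\}$ — which is bounded away from the degeneracy point $1$ — I would prove the analogue of \eqref{stimaphi}: there is $\varsigma>0$ with $e^{2s\bar\varphi}\le\varsigma e^{2s\Phi}$ and $\tfrac{(1-x)^2}{k(x)}e^{2s\bar\varphi}\le\varsigma e^{2s\Phi}$ on $[0,T]\times[0,A]\times[0,\bar\rho]$; this follows as in \cite{fm1} by a suitable choice of the parameter $\kappa$, using that $(1-x)^2/k(x)$ and the relevant weights are bounded there. This converts the two non degenerate estimates into bounds for $\bigl(s\Theta k v_x^2+s^3\Theta^3\tfrac{(1-x)^2}{k}v^2\bigr)e^{2s\bar\varphi}$ over $\bigl[0,\tfrac{2\alpha+\rho}{3}\bigr]$ and over $\bigl[\tfrac{2\alpha+\rho}{3},\tfrac{\alpha+2\rho}{3}\bigr]$; adding these to the estimate of the first step covers all of $(0,1)$ and yields the conclusion (with $e^{2s\bar\varphi}$ on the left).

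The point requiring the most care — the only genuine difference from the proof of Theorem~\ref{Cor2} — is the treatment of the boundary terms produced by Theorem~\ref{nondegenere} applied to $z$ on $(0,\bar\rho)$: at $x=\bar\rho$ the cut off forces $z$ and $z_x$ to vanish, so that endpoint is harmless, but at $x=0$ one has $z_x=v_x\neq 0$ in general, so (unlike the endpoint $x=\bar\alpha$ in Theorem~\ref{Cor2}) the corresponding boundary term does not vanish automatically. To dispose of it one must orient the non degenerate weight so that this $x=0$ boundary term carries the favourable sign — concretely, choosing $\sigma$ increasing on $(0,\bar\rho)$, e.g. $\sigma(x)=\fd\int_0^x\frac{1}{k(t)}\,dt$, rather than a function vanishing at $\bar\rho$ — after which it can be dropped. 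Once this is arranged no new difficulty arises, since $k$ stays strictly positive on $[0,\bar\rho]$, and the remaining steps are a verbatim transcription of the proof of Theorem~\ref{Cor2}.
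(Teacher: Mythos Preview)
Your proposal is correct and mirrors exactly the approach the paper intends: it simply says ``Proceeding as before one can prove'' Theorem~\ref{Cor12}, meaning the proof of Theorem~\ref{Cor2} with the roles of the endpoints $0$ and $1$ exchanged, and your three-cutoff decomposition (degenerate estimate near $x=1$ via Theorem~\ref{Cor1'}, non degenerate estimate on $[0,\bar\rho]$, and a third cutoff to cover the middle zone) is precisely that. Your observation about the boundary term at $x=0$ in the non degenerate step is a genuine point the paper glosses over: in Theorem~\ref{Cor2} the surviving boundary contribution of Theorem~\ref{nondegenere} sits at the \emph{right} endpoint and carries the favourable sign, whereas in the mirrored configuration it sits at the \emph{left} endpoint, so one must indeed reorient the auxiliary weight $\sigma$ (equivalently, apply Theorem~\ref{nondegenere} in its reflected form) to drop it; your fix is the right one.
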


\begin{Remark}\label{remarkultimo}
Observe that the results of Theorems \ref{Cor2} and \ref{Cor12} still hold true if we substitute the domain $(0,T)\times (0,A)$ with a general domain $(T_1,T_2)\times (\delta,A)$, provided that $\mu$ and $\beta$ satisfy the required assumptions. In this case, in place of the function $\Theta$ defined in \eqref{571}, we have to consider the weight function
\[
\tilde \Theta(t,a):= \frac{1}{(t-T_1)^4 (T_2-t)^4(a-\delta)^4}.
\]
\end{Remark}
Using the previous local Carleman estimates one can prove the next observability inequalities.

\begin{Theorem}\label{Theorem4.4} Assume Hypothesis $\ref{BAss01}$ or \ref{BAss02} and  $\ref{conditionbeta}$ with $T > \bar a$. Then, for every $\delta \in (0,A)$, 
there exists a  strictly positive constant $C= C(\delta)$  such that every
solution $v$ of \eqref{h=0} in
$\mathcal V$
satisfies
\begin{equation}\nonumber
\begin{aligned}
 \int_0^A\int_0^1 v^2( T-\bar a,a,x) dxda  &\le C\int_0^{T} \int_0^\delta \int_0^1v^2(t,a,x) dxdadt\\
 &+ C\left( \int_0^{T}\int_0^1 v_T^2(a,x)dxda+ \int_0^T \int_0^A\int_ \omega v^2 dx dadt\right).
\end{aligned}\end{equation}
Moreover, if $v_T(a,x)=0$ for all $(a,x) \in (0,  T) \times (0,1)$, one has
\[
\begin{aligned}
 \int_0^A\!\!\int_0^1 v^2(T -\bar a,a,x) dxda  &\le C\left(\int_0^{T} \int_0^\delta\!\! \int_0^1v^2(t,a,x) dxdadt +\!\! \int_0^T\!\! \int_0^A\!\!\int_ \omega v^2 dx dadt\right).
\end{aligned}
\]
\end{Theorem}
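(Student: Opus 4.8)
The plan is to derive the observability inequality in Theorem \ref{Theorem4.4} from the $\omega$-local Carleman estimates of Theorems \ref{Cor2}, \ref{Cor12} together with the non-degenerate estimate of Theorem \ref{nondegenere}, combining them with the method of characteristics to handle the nonlocal birth term $\beta(a,x)v(t,0,x)$. First I would note that by a density argument it suffices to prove the inequality for $v \in \mathcal{W}$, i.e.\ for $v_T \in D(\mathcal{A}^2)$; the general case follows since $\mathcal{W}$ is dense in $\mathcal{U}$ and both sides of the inequality are continuous with respect to the relevant norms.

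\textbf{Step 1: Carleman estimate on a subdomain avoiding $a=0$.} Since $T<A$, fix $\delta \in (0,A)$ with $\delta$ slightly larger than $T$ (as in Proposition \ref{obser.}), and apply the Carleman estimate of Theorem \ref{Cor2} (or \ref{Cor12}) — but on the shifted domain $(0,T)\times(\delta,A)\times(0,1)$, using Remark \ref{remarkultimo} with the weight $\tilde\Theta(t,a)=1/(t^4(T-t)^4(a-\delta)^4)$. On this domain the source term for the equation satisfied by $v$ is $f = -\beta(a,x)v(t,0,x)$, which, by Hypothesis \ref{conditionbeta}, is supported in $\{a \ge \bar a\}$; we keep it on the right-hand side for now. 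This gives
\[
\int_{(0,T)\times(\delta,A)\times(0,1)}\!\!\Bigl(s\tilde\Theta k v_x^2 + s^3\tilde\Theta^3 \tfrac{x^2}{k}v^2\Bigr)e^{2s\tilde\varphi}\,dxdadt \le C\Bigl(\int_{(0,T)\times(\delta,A)\times(0,1)}\!\!\beta^2 v^2(t,0,x)e^{2s\tilde\Phi}\,dxdadt + \int_0^T\!\!\int_\delta^A\!\!\int_\omega v^2\Bigr).
\]

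\textbf{Step 2: Absorb the birth term by characteristics.} The term $v(t,0,x)$ must be controlled. Integrating the equation \eqref{h=0} along the characteristic lines $a-t=\text{const}$ (as in \cite{fJMPA}), one expresses $v(t,0,x)$ in terms of $v$ at points with larger age, ultimately in terms of either $v_T$ (when the characteristic exits through $t=T$) or of $v$ at ages $\ge \delta$ — here is where the hypothesis $\bar a \le T$ and $\delta>T$ enter, guaranteeing the characteristic through $(t,0)$ reaches age $\bar a$ at a time still in $[0,T]$ and then age $\delta$ before $t=T$ or exits through $t=T$ first. Combining this with Gronwall's inequality (using $\mu,\beta$ bounded and $\beta \ge 0$) bounds $\int_0^T\int_0^1 v^2(t,0,x)\,e^{2s\tilde\Phi}dxdt$ by $C\bigl(\int_0^T\int_0^1 v_T^2 + \int_0^T\int_\delta^A\int_0^1 v^2 e^{2s\tilde\Phi}\bigr)$. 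The first integral on the right of Step 1, being supported in $a \ge \bar a \ge$ a region where $\tilde\Theta$ is bounded, can then be absorbed into the left-hand side after taking $s$ large (the $s^3\tilde\Theta^3$ factor dominates), leaving only the $v_T$ term and the $\omega$-observation term.

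\textbf{Step 3: From the weighted estimate to the energy at $t=T-\bar a$.} On any compact subset of $(0,T)\times(\delta,A)$ the weights $e^{2s\tilde\varphi}$, $\tilde\Theta$, $x^2/k$ (away from the degeneracy, and using $1/\sqrt{k}\in L^1$ near it) are bounded below and above, so Step 1 yields $\int_{t_1}^{t_2}\int_\delta^A\int_0^1 v^2 \le C\bigl(\int_0^T\int_0^1 v_T^2 + \int_0^T\int_\delta^A\int_0^1 v^2\,dxdadt + \int_0^T\int_0^A\int_\omega v^2\bigr)$ for suitable $0<t_1<t_2<T$. Finally, one uses the dissipativity of the equation: multiplying \eqref{h=0} by $v$ and integrating, together with the fact that $v(t,A,x)=0$ and $\mu\ge 0$, one shows that $t \mapsto \int_0^A\int_0^1 v^2(t,a,x)\,dxda$ satisfies a differential inequality of the form $\frac{d}{dt}\|v(t)\|^2 \ge -C\|v(t)\|^2 - C\int_\omega v^2$ (the birth term contributing a controlled lower-order piece via Step 2's estimate), so $\|v(T-\bar a)\|^2 \le e^{C(t-(T-\bar a))}\|v(t)\|^2 + C\int_{T-\bar a}^t\int_\omega v^2$; averaging over $t \in (t_1,t_2)$ and invoking the previous bound gives the claim. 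The case $v_T \equiv 0$ on $(0,T)\times(0,1)$ then drops the $v_T$ term automatically since only $v_T$ on $(\delta,A)$ would remain and that can be folded into the $\int_0^T\int_\delta^A v^2$ term or vanishes by the same characteristic argument.

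\textbf{Main obstacle.} The delicate point is Step 2: tracking the characteristics so that every value $v(t,0,x)$ is estimated \emph{either} by $v_T$ \emph{or} by $v$ on the age slab $(\delta,A)$ where the Carleman weight is well-behaved, and ensuring the resulting feedback can genuinely be absorbed (which forces the choice $\delta$ close to $T$ and uses $\bar a \le T < A$ crucially). One must also be careful that the Carleman weight $\tilde\Phi$ appearing with $v(t,0,x)$ — evaluated at $a=0$, outside the domain $(\delta,A)$ — is comparable to the weight on the slab; since $\tilde\Theta$ blows up as $a \to \delta^+$ but $v(t,0,x)$ sits at $a=0$, one actually estimates that term crudely (the exponential weight is bounded there) and relies on the strong coercive power $s^3\tilde\Theta^3$ on the left to win. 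Apart from this, the remaining steps are routine energy estimates and a standard compactness/density argument.
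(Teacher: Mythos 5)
Your proposal follows essentially the same route as the paper's proof: an energy/Gronwall estimate linking $\int_0^A\int_0^1 v^2(T-\bar a,a,x)\,dxda$ to a time average of $\|v(t)\|^2_{L^2(Q_{A,1})}$, a splitting of the age integral at $a=\delta$ (the part $a<\delta$ simply kept on the right-hand side), the $\omega$-local Carleman estimates of Theorems \ref{Cor2}/\ref{Cor12} with the shifted weight of Remark \ref{remarkultimo} applied on the slab $a>\delta$ with source $f=-\beta(a,x)v(t,0,x)$, and the method of characteristics to control $v(t,0,x)$. The one place where you make the argument harder than necessary is your Step 2: by Hypothesis \ref{conditionbeta} the characteristic through $(t,0)$ reaches $t=T$ at age $T-t<A$, and the birth term contributes nothing before that (iterating the Duhamel formula, each pass advances time by at least $\bar a>0$), so $v(t,0,\cdot)=S(T-t)v_T(T-t,\cdot)$ as in \eqref{v(0)} and $\int_Q v^2(t,0,x)\,dxdadt$ is bounded \emph{directly} by $C\int_{Q_{T,1}}v_T^2\,dxda$. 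Hence no feedback term $\int_0^T\int_\delta^A\int_0^1 v^2e^{2s\tilde\Phi}\,dxdadt$ appears and nothing needs to be absorbed by taking $s$ large; this also cures the circularity in the displayed inequality of your Step 3, whose right-hand side as written contains $\int_0^T\int_\delta^A\int_0^1 v^2\,dxdadt$, i.e.\ the very quantity being estimated. Finally, note the theorem asks for arbitrary $\delta\in(0,A)$, so one should not restrict to $\delta$ slightly larger than $T$ as in your Step 1 (that restriction is only needed later, for Theorem \ref{CorOb1'} and Proposition \ref{obser.}).
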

\begin{proof} As in \cite{fJMPA} and using the method of characteristic lines, one can prove
the following implicit formula for $v$ solution of \eqref{h=0}:
\begin{equation}\label{implicitformula}
S(T-t) v_T(T+a-t, \cdot),
\end{equation}
if $t \ge  \tilde T + a$  and
\begin{equation}\label{implicitformula1}
v(t,a, \cdot)=\begin{cases}
S(T-t) v_T(T+a-t, \cdot)\!+\int_a^{T+a-t}S(s-a)\beta(s, \cdot)v(s+t-a, 0, \cdot) ds, &\Gamma\!= \!\bar a \\
\int_a^AS(s-a)\beta(s, \cdot)v(s+t-a, 0, \cdot) ds, & \Gamma\!= \!\Gamma_{A,T},
\end{cases}
\end{equation}
otherwise. Here  $(S(t))_{t \ge0}$ is the semigroup generated by the operator $\mathcal A_0 -\mu Id$ for all $u \in D(\mathcal A_0)$  ($Id$ is the identity operator), $\Gamma_{A,T}:= A -a +t-\tilde T$ and
\begin{equation}\label{Gamma}
\Gamma:= \min \{\bar a, \Gamma_{A,T}\}.
\end{equation}
In particular, it results
\begin{equation}\label{v(0)}
v(t,0, \cdot):= S(T-t) v_T(T-t, \cdot).
\end{equation}
Proceeding as in \cite[Theorem 4.4]{fJMPA}, with suitable changes, one has that there exists a positive constant $C$ such that:
\begin{equation}\label{t=011}
 \int_{Q_{A,1}} v^2(\tilde T,a,x) dxda  \le C\int_{\frac{T}{4}}^{\frac{3T}{4}} \int_{Q_{A,1}} v^2(t,a,x) dxdadt.
\end{equation}
Indeed, define, for $\varsigma >0$, the function $w= e^{\varsigma t }v$, where $v$ solves \eqref{h=0}.
Then $w$ satisfies
\begin{equation}\label{h=0'}
\begin{cases}
\ds \frac{\partial w}{\partial t} + \frac{\partial w}{\partial a}
+(k(x)w_{x})_x-(\mu(t, a, x)+ \varsigma) w =-\beta(a,x)w(t,0,x),& (t,x,a) \in  \tilde Q,
\\[5pt]
w(t,a,0)=w(t,a,1) =0, &(t,a) \in \tilde Q_{T,A},\\
  w(T,a,x) = e^{\varsigma T}v_T(a,x), &(a,x) \in Q_{A,1}, \\
  w(t,A,x)=0, & (t,x) \in \tilde Q_{T,1},
\end{cases}
\end{equation}
where $\tilde Q:= (\tilde T, T) \times Q_{A,1}$, $\tilde Q_{T,A}:= (\tilde T, T) \times (0,A)$ and $\tilde Q_{T,1}:= (\tilde T,T)\times (0,1)$.
Multiplying the equation of \eqref{h=0'} by $-w$ and integrating by parts on $Q_t:= (\tilde T,t) \times(0,A) \times(0,1)$, it results
\begin{equation}\label{3.56idriss}
\begin{aligned}
&-\frac{1}{2}\int_{Q_{A,1}} w^2(t,a,x) dxda + \frac{e^{\varsigma  \tilde T}}{2} \int_{Q_{A,1}}  v^2(\tilde T,a,x) dxda + \frac{1}{2} \int_{\tilde T}^t\int_0^1 w^2(\tau,0,x) dx d\tau\\
&+ \varsigma \int_{Q_t}w^2(\tau,a,x) dxdad\tau \le \int_{Q_t}\beta w(\tau,0,x)wdxdad\tau
\\
& \le \|\beta\|_{L^\infty(Q)}\frac{1}{\epsilon}\int_{Q_t}w^2dxdad\tau+ \epsilon A\|\beta\|_{L^\infty(Q)} \int_{\tilde T}^t\int_0^1 w^2(\tau,0,x)dxd\tau,
\end{aligned}
\end{equation}
for $\epsilon >0$. Choosing $\ds\epsilon= \frac{1}{2\|\beta\|_{L^\infty(Q)}A}$ and $\ds \varsigma =\frac{ \|\beta\|_{L^\infty(Q)}}{\epsilon}$, we have
\[
\begin{aligned}
 \int_{Q_{A,1}}  v^2(\tilde T,a,x) dxda  \le C\int_{Q_{A,1}}w^2(t,a,x) dxda  \le C \int_{Q_{A,1}} v^2(t,a,x) dxda.
\end{aligned}
\]
Then, integrating over $\ds \left[\frac{T}{4}, \frac{3T}{4} \right]$, we have \eqref{t=011}. 

Now, take $\delta \in (0, A)$. By \eqref{t=011}, we have
\begin{equation}\label{t=0}
\begin{aligned}
 \int_{Q_{A,1}} v^2(\tilde T,a,x) dxda  \le C \int_{\frac{T}{4}}^{\frac{3T}{4}} \left(\int_0^\delta + \int_\delta^A \right)\int_0^1 v^2(t,a,x) dxdadt.
\end{aligned}
\end{equation}
Consider the term $\ds\int_{\frac{T}{4}}^{\frac{3T}{4}}  \int_\delta^A\int_0^1v^2(t,a,x) dxdadt$. If Hypothesis \ref{BAss01} holds, proceeding as in the proof of Proposition \ref{Cor1} in the case $\mu\neq0$, one has 
\begin{equation}\label{terminenuovo11}
\begin{aligned}
 \int_0^1v^2dx
& 
\le C\left( \int_0^1 k v_x^2 dx +\int_0^1 \frac{x^2}{k} v^2dx\right) ,
\end{aligned}\end{equation}
for a  strictly positive constant $C.$  Hence,
\[
\begin{aligned}
\int_{\frac{T}{4}}^{\frac{3T}{4}}  \int_\delta^A\int_0^1v^2(t,a,x) dxdadt &\le C \int_{\frac{T}{4}}^{\frac{3T}{4}}  \int_\delta^A\int_0^1\tilde \Theta k v_x^2e^{2s\varphi} dxdadt \\
& +C \int_{\frac{T}{4}}^{\frac{3T}{4}}  \int_\delta^A\int_0^1 \tilde \Theta^3 \frac{x^2}{k}v^2e^{2s\varphi} dxdadt.
\end{aligned}
\]
Analogously, if Hypothesis \ref{BAss02} holds, then we obtain
\[
\begin{aligned}
\int_{\frac{T}{4}}^{\frac{3T}{4}}  \int_\delta^A\int_0^1v^2(t,a,x) dxdadt &\le C \int_{\frac{T}{4}}^{\frac{3T}{4}}  \int_\delta^A\int_0^1 \tilde\Theta k v_x^2e^{2s\varphi} dxdadt \\
& +C \int_{\frac{T}{4}}^{\frac{3T}{4}}  \int_\delta^A\int_0^1 \tilde\Theta^3 \frac{(1-x)^2}{k}v^2e^{2s\varphi} dxdadt.
\end{aligned}
\]
Thus, by Theorem \ref{Cor2} or \ref{Cor12},
\[
\int_{\frac{T}{4}}^{\frac{3T}{4}}   \int_\delta^A\int_0^1v^2(t,a,x) dxdadt \le
C\left(\int_{Q}f^2 dxdadt+ \int_0^T \int_0^A\int_ \omega v^2 dx dadt\right),
\]
where, in this case, $f(t,a,x):=-\beta(a,x)v(t,0,x)$.
Thus
\begin{equation}\label{t=01}
\int_{\frac{T}{4}}^{\frac{3T}{4}} \!\! \int_\delta^A\!\!\int_0^1v^2(t,a,x) dxdadt \le C\|\beta\|^2_{L^\infty(Q)}\!\left(\int_{Q}v^2(t,0,x)dxdadt+ \int_0^T \int_0^A\!\!\int_ \omega v^2 dx dadt\right),
\end{equation}
for a  strictly positive constant $C$. By \eqref{v(0)}, \eqref{t=01} and proceeding as in \cite{fJMPA}, we have
\begin{equation}\label{t=01'}
\int_{\frac{T}{4}}^{\frac{3T}{4}}\!\!  \int_\delta^A\!\!\int_0^1v^2(t,a,x) dxdadt \le C\|\beta\|^2_{L^\infty(Q)}\left(\int_{Q_{T,1}}\!\! v_T^2(a,x)dxda+ \int_0^T \!\!\int_0^A\!\!\int_ \omega v^2 dx dadt\right),
\end{equation}
for a  strictly positive constant $C$.
By \eqref{t=0} and \eqref{t=01'}, it results
\begin{equation}\label{t=03}
\begin{aligned}
 \int_{Q_{A,1}}  v^2(\tilde T,a,x) dxda  &\le C\int_0^T \int_0^\delta \int_0^1 v^2(t,a,x) dxdadt\\
 &+ C\left( \int_{Q_{T,1}} v_T^2(a,x)dxda+ \int_0^T \int_0^A\int_ \omega v^2dx dadt\right).
\end{aligned}
\end{equation}

\end{proof} 

\begin{Corollary}\label{CorOb} Assume $\bar a = T$, Hypotheses $\ref{BAss01}$ or $\ref{BAss02}$ and $\ref{conditionbeta}$.  Then, for every $\delta \in (0,A)$,
there exists a  strictly positive constant $C= C(\delta)$  such that every
solution $v$ of \eqref{h=0} in
$\mathcal V$
satisfies
\begin{equation}\nonumber
\begin{aligned}
 \int_0^A\int_0^1 v^2( 0,a,x) dxda  &\le C\int_0^T \int_0^\delta \int_0^1v^2(t,a,x) dxdadt\\
 &+ C\left( \int_0^T\int_0^1 v_T^2(a,x)dxda+ \int_0^T \int_0^A\int_ \omega v^2 dx dadt\right).
\end{aligned}\end{equation}
Moreover, if $v_T(a,x)=0$ for all $(a,x) \in (0, T) \times (0,1)$, one has
\[
\begin{aligned}
 \int_0^A\!\!\int_0^1  v^2(0,a,x) dxda  &\le C\left(\int_0^T \int_0^\delta\!\! \int_0^1 v^2(t,a,x) dxdadt +\!\! \int_0^T\!\! \int_0^A\!\!\int_ \omega v^2 dx dadt\right).
\end{aligned}
\]
\end{Corollary}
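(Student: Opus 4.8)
The corollary is precisely the boundary case $\bar a=T$ (so that $T-\bar a=0$) of Theorem \ref{Theorem4.4}, which was stated only under the strict inequality $\bar a<T$; the plan is therefore to obtain it by approximating $\bar a$ from below and passing to the limit. Fix $\delta\in(0,A)$ and, for every integer $n>4/T$, put $\bar a_n:=T-\tfrac1n\in\big(\tfrac{3T}{4},T\big)$. Since $\beta\equiv0$ on $[0,\bar a]\times[0,1]=[0,T]\times[0,1]\supseteq[0,\bar a_n]\times[0,1]$, Hypothesis \ref{conditionbeta} holds with $\bar a_n$ in place of $\bar a$, and $T>\bar a_n$; hence Theorem \ref{Theorem4.4} applies to the given solution $v$ with threshold age $\bar a_n$ and yields, since $T-\bar a_n=\tfrac1n$,
\[
\int_0^A\!\!\int_0^1 v^2\Big(\tfrac1n,a,x\Big)\,dxda\le C_n(\delta)\left(\int_0^T\!\!\int_0^\delta\!\!\int_0^1 v^2\,dxdadt+\int_0^T\!\!\int_0^1 v_T^2\,dxda+\int_0^T\!\!\int_0^A\!\!\int_\omega v^2\,dxdadt\right),
\]
and, when in addition $v_T\equiv0$ on $(0,T)\times(0,1)$, the same inequality without the $v_T$-term.

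The next step is to check that $C_n(\delta)$ may be taken independent of $n$. Going through the proof of Theorem \ref{Theorem4.4}, the threshold age enters only via $\tilde T=T-\bar a_n=\tfrac1n$: in the energy identity \eqref{3.56idriss} one integrates over $(\tilde T,t)\times(0,A)\times(0,1)$ and then over $t\in[\tfrac T4,\tfrac{3T}{4}]$, which is legitimate precisely because $\tilde T<\tfrac T4$, and the resulting constant in \eqref{t=011} depends only on $T,A,\|\beta\|_{L^\infty(Q)}$ and stays bounded as $\tilde T\to0^+$; the Carleman step is carried out, through Theorems \ref{Cor2}, \ref{Cor12} and Remark \ref{remarkultimo}, on the \emph{fixed} box $(\tfrac T4,\tfrac{3T}{4})\times(\delta,A)$, so its constant depends only on $\delta,T,k,\|\mu\|_{L^\infty(Q)}$; and the passage from $\int_Q v^2(t,0,x)\,dxdadt$ to $\int_0^T\!\int_0^1 v_T^2\,dxda$ uses the representation \eqref{v(0)}, $v(t,0,\cdot)=S(T-t)v_T(T-t,\cdot)$ — valid for $t\ge\tilde T$, hence on $[\tfrac T4,\tfrac{3T}{4}]$ — together with the contractivity of $(S(t))_{t\ge0}$, none of which involves $n$. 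Therefore $C(\delta):=\sup_{n>4/T}C_n(\delta)<\infty$.

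Finally, since $v\in\mathcal{V}\subset\mathcal{U}\subset C\big([0,T];L^2(Q_{A,1})\big)$, the map $t\mapsto\int_0^A\!\int_0^1 v^2(t,a,x)\,dxda$ is continuous on $[0,T]$; hence the left-hand side of the displayed estimate converges to $\int_0^A\!\int_0^1 v^2(0,a,x)\,dxda$ as $n\to\infty$, while the right-hand side does not depend on $n$. Letting $n\to\infty$ gives both assertions of the corollary. (Alternatively, one may simply rerun the proof of Theorem \ref{Theorem4.4} with $\bar a=T$, i.e. $\tilde T=0$: the strict inequality $\bar a<T$ is never used there, only $\bar a\ge\tfrac{3T}{4}$, so the estimate follows directly at time $T-\bar a=0$.) I expect the only genuinely delicate point to be the uniformity of the preceding paragraph — verifying that no constant in the proof of Theorem \ref{Theorem4.4} degenerates as $\bar a_n\uparrow T$; once this bookkeeping is done, the continuity of $v$ in $L^2(Q_{A,1})$ closes the argument immediately.
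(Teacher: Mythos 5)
Your proof is correct and in substance matches the paper's (the paper states the corollary without proof, as the argument for Theorem \ref{Theorem4.4} applies verbatim with $\tilde T = T-\bar a = 0$, which is exactly your parenthetical alternative). The approximation $\bar a_n\uparrow T$ followed by the limit via $v\in C([0,T];L^2(Q_{A,1}))$ is a harmless detour, and your uniformity check on the constants is the right thing to verify in either formulation.
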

Actually, proceeding as in \cite{fJMPA} with suitable changes, we can improve the previous results in the following way:
\begin{Theorem}\label{CorOb1'}Assume Hypotheses $\ref{BAss01}$ or $\ref{BAss02}$ and $\ref{conditionbeta}$. Then, for every $\delta \in (T,A)$, 
there exists a  strictly positive constant $C= C(\delta)$  such that every
solution $v$ of \eqref{h=0} in
$\mathcal V$
satisfies
\begin{equation}\nonumber
 \int_0^A\int_0^1 v^2(T-\bar a,a,x) dxda \le 
 C\left( \int_0^\delta \int_0^1 v_T^2(a,x)dxda+ \int_0^T \int_0^A\int_ \omega v^2dx dadt\right).
\end{equation}
\end{Theorem}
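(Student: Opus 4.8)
The aim is to sharpen the observability estimate of Theorem~\ref{Theorem4.4} and Corollary~\ref{CorOb} by deleting the extra term $C\int_0^T\!\int_0^\delta\!\int_0^1 v^2$ on the right--hand side; equivalently, to prove directly that $v$ at the single time $\tilde T:=T-\bar a$ (which lies in $[0,T)$ by Hypothesis~\ref{conditionbeta}) is observable over the \emph{whole} age interval. I would proceed through three ingredients: (a) the $\omega$--Carleman estimate of Theorem~\ref{Cor2} or of Theorem~\ref{Cor12} — according to which of Hypotheses~\ref{BAss01}, \ref{BAss02} holds — applied to $v$ regarded as a solution of \eqref{adjoint} with source $f:=-\beta(a,x)v(t,0,x)$; (b) an energy (dissipation) argument in the age variable, localized on $(\eta,A)$; and (c) the implicit representation \eqref{implicitformula}--\eqref{implicitformula1} to recover the small ages $(0,\eta)$.

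First, the source is absorbed. Since $\int_Q f^2e^{2s\Phi}\le\|\beta\|_\infty^2\int_0^T\!\int_0^A\!\int_0^1 v^2(t,0,x)e^{2s\Phi}$, and $v(t,0,\cdot)$ is given by \eqref{v(0)} for $t\ge\tilde T$, while for $t<\tilde T$ the identity coming from \eqref{implicitformula1} has a correction involving only $v(\cdot,0,\cdot)$ at later times (this is where Hypothesis~\ref{conditionbeta} enters), a backward Gronwall estimate for $t\mapsto\|v(t,0,\cdot)\|_{L^2(0,1)}$ gives $\int_0^T\|v(t,0,\cdot)\|^2_{L^2(0,1)}\,dt\le C\int_0^\delta\!\int_0^1 v_T^2$, the inequality $T<\delta$ being essential here. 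Inserting this into Theorem~\ref{Cor2}/\ref{Cor12} and localizing the Carleman inequality on $[t_1,t_2]\times[\eta,A]\times[0,1]$, with $[t_1,t_2]\subset\subset(\tilde T,T)$ and $0<\eta<A$, and passing from $\int(kv_x^2+\frac{x^2}{k}v^2)$ to $\int v^2$ exactly as in the derivation of \eqref{terminenuovo11}, one obtains
\[
\int_{t_1}^{t_2}\!\int_\eta^A\!\int_0^1 v^2\,dx\,da\,dt\ \le\ C\Big(\int_0^\delta\!\int_0^1 v_T^2\,dx\,da+\int_0^T\!\int_0^A\!\int_\omega v^2\,dx\,da\,dt\Big).
\]

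Next, multiplying the equation of \eqref{h=0} by $v$ and integrating over $(0,1)\times(\eta,A)$, the boundary term at $a=A$ vanishes while the one at $a=\eta$ is favourable, and $\big|\int_\eta^A\!\int_0^1\beta\,v\,v(t,0,\cdot)\big|\le\frac12\int_\eta^A\!\int_0^1 v^2+C\|v(t,0,\cdot)\|^2_{L^2(0,1)}$; hence, writing $E_\eta(t):=\int_\eta^A\!\int_0^1 v^2(t,a,x)\,dx\,da$, one gets $\frac{d}{dt}\big(e^{t}E_\eta(t)\big)\ge-Ce^{t}\|v(t,0,\cdot)\|^2_{L^2(0,1)}$. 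Integrating from $\tilde T$ to $\tau$, then in $\tau\in[t_1,t_2]$, and using the last display together with the bound on $\int_0^T\|v(\cdot,0,\cdot)\|^2$, gives $E_\eta(\tilde T)\le C\big(\int_0^\delta\!\int_0^1 v_T^2+\int_0^T\!\int_0^A\!\int_\omega v^2\big)$, and likewise $E_\eta(\tilde T+\eta)\le C(\cdots)$ provided $\eta<\bar a$. To treat the ages in $(0,\eta)$, set $\eta:=\bar a/2$; for $a\in(0,\eta)$, Duhamel's formula for \eqref{h=0} along the characteristic through $(\tilde T,a)$, followed only for an age increment $\eta$ — so that one stops \emph{before} reaching $\{t=T\}$ and the data value entering is $v(\tilde T+\eta,a+\eta,\cdot)$, not $v_T$ at an age that could exceed $\delta$ — yields $v(\tilde T,a,\cdot)=S(\eta)v(\tilde T+\eta,a+\eta,\cdot)+\int_0^\eta S(\eta-r)\beta(a+\eta-r,\cdot)v(\tilde T+\eta-r,0,\cdot)\,dr$. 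Since $S$ is a contraction and $a+\eta\in(\eta,2\eta)\subset(\eta,A)$, this gives $\|v(\tilde T,a,\cdot)\|^2\le 2\|v(\tilde T+\eta,a+\eta,\cdot)\|^2+C\int_0^T\|v(t,0,\cdot)\|^2_{L^2(0,1)}\,dt$, and integrating over $a\in(0,\eta)$ bounds $\int_0^\eta\!\int_0^1 v^2(\tilde T,\cdot)$ by $2E_\eta(\tilde T+\eta)+C\int_0^\delta\!\int_0^1 v_T^2$. Summing the contributions of $(0,\eta)$ and $(\eta,A)$ yields the claimed inequality.

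The step I expect to be the main obstacle is precisely the passage from the Carleman estimate — which is informative only on compact subsets of $(0,T)\times(0,A)$ — to observability of $v$ at the single instant $\tilde T$ over \emph{all} ages, those near $a=0$ included. The ages in $(\eta,A)$ are handled by the dissipation argument, but the ages near $0$ can only be reached through the characteristics, where one must ensure that the $\beta\,v(t,0,x)$ contributions involve $v_T$ solely via ages $<T<\delta$; this forces the use of Hypothesis~\ref{conditionbeta} (hence of $\tilde T\ge0$) and of \eqref{v(0)}, and is the reason why one must not carry the characteristic up to $\{t=T\}$. The other delicate point — presumably the calculation of \cite[Theorem~4.8]{fJMPA} here made precise — is the age--localized energy inequality: because of the source $-\beta v(t,0,x)$ the map $t\mapsto E_\eta(t)$ is not monotone, so the corrector $e^{t}$ and the remainder $C\|v(t,0,\cdot)\|^2_{L^2(0,1)}$ must be kept throughout the Gronwall step, not discarded.
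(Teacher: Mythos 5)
Your argument is correct, and it reaches the inequality by a genuinely different decomposition from the one the paper (implicitly, via \cite{fJMPA}) relies on. The paper's route is to first establish the weaker estimate of Theorem \ref{Theorem4.4}/Corollary \ref{CorOb}, in which the whole age strip $(0,\delta_0)$ is left over all times as the extra term $\int_0^T\!\int_0^{\delta_0}\!\int_0^1 v^2$, and then to absorb that term a posteriori through the full implicit representation \eqref{implicitformula}--\eqref{implicitformula1}: for $a<\delta_0$ the characteristic is followed all the way to $\{t=T\}$ or $\{a=A\}$, which forces the case distinction on $\Gamma$ and requires taking $\delta_0$ small enough (essentially $\delta_0<\delta-T$) so that the ages $T+a-t$ of $v_T$ that appear stay below $\delta$. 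You instead split the age interval at $\eta=\bar a/2$, control $E_\eta(\tilde T)$ and $E_\eta(\tilde T+\eta)$ by an age-localized energy inequality fed into the Carleman estimate restricted to a compact subset of $(0,T)\times(\eta,A)$ (the same mechanism as \eqref{t=011} and \eqref{terminenuovo11}, but localized in age), and recover the ages in $(0,\eta)$ by a characteristic step of length only $\eta$, landing at time $\tilde T+\eta<T$ and age $a+\eta\in(\eta,\bar a)$. This buys two simplifications: the Duhamel correction along that short step in fact vanishes, since the ages $a+\eta-r$ entering $\beta$ stay in $[0,\bar a]$ where $\beta\equiv0$ by Hypothesis \ref{conditionbeta}; and no smallness constraint on the age cut-off is needed, because $v_T$ enters only through $v(\cdot,0,\cdot)$, i.e.\ through ages in $(0,T)\subset(0,\delta)$. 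What the paper's organization buys in exchange is reusability of Theorem \ref{Theorem4.4} as a free-standing intermediate result. The one step in your version that must be written out with care is the bound $\int_0^T\|v(t,0,\cdot)\|^2_{L^2(0,1)}dt\le C\int_0^\delta\!\int_0^1 v_T^2$: formula \eqref{v(0)} as derived from \eqref{implicitformula} covers only $t\ge\tilde T$, and for $t<\tilde T$ the Volterra identity for $v(\cdot,0,\cdot)$ plus a backward Gronwall iteration is genuinely needed --- you have identified this correctly.
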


\vspace{0.4cm}
By Theorem \ref{CorOb1'} and using a density argument, one can deduce
 Proposition \ref{obser.}. As a consequence one can prove, as in \cite{fJMPA}, the following null controllability results:

\begin{Theorem}\label{NCintermediate} Assume Hypotheses $\ref{BAss01}$ or $\ref{BAss02}$ or $\ref{ipok}$ and $\ref{conditionbeta}$. Then, given $T>0$ and $y_0 \in L^2(Q_{A,1})$,  for every $\delta \in (T,A)$,  there exists a control $f_\delta \in L^2(\tilde Q)$ such that the solution $y_\delta\in \mathcal U$ of 
\begin{equation} \label{1new}
\begin{cases}
\ds \frac{\partial y}{\partial t}  +\frac{\partial y}{\partial a}
-(k(x)y_{x})_x+\mu(t, a, x)y =f_\delta(t,x,a)\chi_{\omega}  &\quad \text{in } \tilde Q,\\
  y(t, a, 1)=y(t, a, 0)=0  &\quad \text{on } \tilde Q_{T,A},\\
 y(\tilde T , a, x)=y_0(a, x) &\quad \text{in }Q_{A,1},\\
 y(t, 0, x)=\int_0^A \beta (a, x)y (t, a, x) da  &\quad  \text{in } \tilde Q_{T,1},
\end{cases}
\end{equation}
satisfies
\[
y_\delta(T,a,x) =0 \quad \text{a.e. } (a,x) \in (\delta, A) \times (0,1).
\]
Moreover, there exists $C=C(\delta)>0$ such that
\begin{equation}\label{stimaf1}
\|f_\delta\|_{L^2(\tilde Q)} \le C \|y_0\|_{L^2(Q_{A,1})}.
\end{equation}
Here, we recall, $\tilde Q=(\tilde T,T)\times(0,A)\times(0,1)$, $\tilde Q_{T,A} = (\tilde T,T)\times (0,A)$
and $\tilde Q_{T,1}=(\tilde T,T)\times(0,1)$.
\end{Theorem}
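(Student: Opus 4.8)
The plan is to obtain Theorem \ref{NCintermediate} from the observability inequality of Proposition \ref{obser.} by the classical penalized duality (HUM) argument, carried out on the interval $(\tilde T,T)$ with $\tilde T:=T-\bar a$. Since all the analytic substance is already encoded in the Carleman estimates, the remaining work is of a routine, functional--analytic nature, and it mirrors the one performed in \cite{fJMPA}.

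First I would record the duality relation between \eqref{1new} and the backward adjoint problem \eqref{h=0}. For $f\in L^2(\tilde Q)$ let $y\in\mathcal U$ solve \eqref{1new} (well posed on $(\tilde T,T)$ by Theorem \ref{theorem_existence}), and for $v_T\in L^2(Q_{A,1})$ let $v\in\mathcal U$ solve \eqref{h=0} on $(\tilde T,T)$ with $v(T,\cdot)=v_T$. Multiplying the equation of \eqref{1new} by $v$, integrating on $\tilde Q$ and integrating by parts in $t$, $a$ and $x$ — the boundary contributions at the degeneracy point(s) $x=0,1$ vanish because $y(t,a,\cdot),v(t,a,\cdot)\in H^1_k(0,1)$, while the integration by parts in $a$ together with the renewal condition $y(t,0,x)=\int_0^A\beta y\,da$ reproduces exactly the term $\beta(a,x)v(t,0,x)$ appearing in \eqref{h=0} — one gets, first for smooth data and then by a density argument in $L^2$, the identity
\[
\int_{Q_{A,1}}\!\!y(T,a,x)v_T(a,x)\,dxda=\int_{Q_{A,1}}\!\!y_0(a,x)v(\tilde T,a,x)\,dxda+\int_{\tilde T}^T\!\!\int_0^A\!\!\int_\omega f\,v\,dxdadt .
\]

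Next, fix $\delta\in(T,A)$ and $\varepsilon>0$ and introduce on $L^2(Q_{A,1})$ the penalized functional
\[
J_\varepsilon(v_T):=\frac12\int_{\tilde T}^T\!\!\int_0^A\!\!\int_\omega v^2\,dxdadt+\frac12\int_0^\delta\!\!\int_0^1 v_T^2\,dxda+\varepsilon\,\|v_T\|_{L^2(Q_{A,1})}+\int_{Q_{A,1}}\!\!y_0\,v(\tilde T,\cdot)\,dxda ,
\]
$v$ being the solution of \eqref{h=0} with terminal datum $v_T$. The functional $J_\varepsilon$ is continuous, strictly convex, and — bounding $\int_{Q_{A,1}}y_0\,v(\tilde T,\cdot)\ge-\|y_0\|_{L^2(Q_{A,1})}\|v(\tilde T)\|_{L^2(Q_{A,1})}$, using the observability inequality \eqref{OI} (which holds on the interval $(\tilde T,T)$ as well, by Remark \ref{remarkultimo}) and Young's inequality — it is coercive, so it admits a unique minimizer $v_T^\varepsilon$. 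Writing the Euler--Lagrange equation at $v_T^\varepsilon$ and combining it with the duality identity for the control $f_\varepsilon:=v^\varepsilon\chi_\omega$ (here $v^\varepsilon$ is the adjoint solution with datum $v_T^\varepsilon$), one finds that the state $y_\varepsilon$ of \eqref{1new} driven by $f_\varepsilon$ satisfies
\[
y_\varepsilon(T,\cdot)=-v_T^\varepsilon\,\chi_{(0,\delta)\times(0,1)}-\varepsilon\,\frac{v_T^\varepsilon}{\|v_T^\varepsilon\|_{L^2(Q_{A,1})}}\quad\text{in }L^2(Q_{A,1}),
\]
whence $\|y_\varepsilon(T,\cdot)\|_{L^2((\delta,A)\times(0,1))}\le\varepsilon$; moreover $J_\varepsilon(v_T^\varepsilon)\le J_\varepsilon(0)=0$, which together with \eqref{OI} and Young's inequality gives $\|f_\varepsilon\|_{L^2(\tilde Q)}\le C\|y_0\|_{L^2(Q_{A,1})}$ with $C=C(\delta)$ independent of $\varepsilon$.

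Finally I would let $\varepsilon\to0^+$. Since $(f_\varepsilon)$ is bounded in $L^2(\tilde Q)$, up to a subsequence $f_\varepsilon\rightharpoonup f_\delta$; by \eqref{stimau} the associated states $y_\varepsilon$ are bounded in $\mathcal U$, and, the control--to--state map of \eqref{1new} being linear and (weakly) continuous, the limit $y_\delta$ is precisely the solution of \eqref{1new} with control $f_\delta$ and $y_\varepsilon(T,\cdot)\rightharpoonup y_\delta(T,\cdot)$ in $L^2(Q_{A,1})$. Weak lower semicontinuity of the norm then yields $\|y_\delta(T,\cdot)\|_{L^2((\delta,A)\times(0,1))}\le\liminf_{\varepsilon\to0}\varepsilon=0$, i.e.\ $y_\delta(T,a,x)=0$ a.e.\ in $(\delta,A)\times(0,1)$, and $\|f_\delta\|_{L^2(\tilde Q)}\le\liminf_{\varepsilon\to0}\|f_\varepsilon\|_{L^2(\tilde Q)}\le C\|y_0\|_{L^2(Q_{A,1})}$, which is \eqref{stimaf1}. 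The only genuinely delicate steps — and the place where the degeneracy enters — are the rigorous justification of the duality identity (the vanishing of the boundary terms at $x=0$ and/or $x=1$ in the weighted spaces $H^1_k$, together with the treatment of the nonlocal renewal term under the mere continuity of $\beta$) and the transfer of the observability estimate of Proposition \ref{obser.} from $(0,T)$ to the working interval $(\tilde T,T)$; both are handled exactly as in \cite{fJMPA}.
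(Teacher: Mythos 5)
Your proposal is correct and follows essentially the same route as the paper, which deduces Theorem \ref{NCintermediate} from the observability inequality of Proposition \ref{obser.} via the standard penalized duality (HUM) argument and simply refers to \cite{fJMPA} for the details. The steps you supply — the duality identity, coercivity of $J_\varepsilon$ via \eqref{OI} and Young's inequality, the Euler--Lagrange relation yielding $\|y_\varepsilon(T,\cdot)\|_{L^2((\delta,A)\times(0,1))}\le\varepsilon$ and the uniform bound on $f_\varepsilon$, and the passage to the limit $\varepsilon\to 0$ — are exactly the ones intended there.
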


Observe that if $T= \bar a$, Theorem  \ref{NCintermediate} is exactly the null controllability result that we expect. Indeed, in this case  \eqref{1new} coincide with \eqref{1}. On the other hand, if $T>\bar a$, the null controllability for \eqref{1} is given in the next theorem and it is based on the previous result:

\begin{Theorem}\label{ultimo} Assume Hypotheses $\ref{BAss01}$ or $\ref{BAss02}$  and $\ref{conditionbeta}$. Then, given $T \in (0, A)$ and $y_0 \in L^2(Q_{A,1})$,  for every $\delta \in (T,A)$,   there exists a control  $f_\delta \in L^2(Q)$ such that the solution $y_\delta$ of  \eqref{1}
satisfies
\[
y_\delta(T,a,x) =0 \quad \text{a.e. } (a,x) \in (\delta, A) \times (0,1).
\]
Moreover, there exists $C=C(\delta)>0$ such that
\begin{equation}\label{stimaf}
\|f_\delta\|_{L^2( Q)} \le C \|y_0\|_{L^2(Q_{A,1})}.
\end{equation}
\end{Theorem}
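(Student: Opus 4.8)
The plan is to deduce the null controllability of \eqref{1} on the full interval $(0,T)$ from Theorem \ref{NCintermediate}, which already gives controllability on the shifted cylinder $\tilde Q = (\tilde T, T)\times(0,A)\times(0,1)$, where $\tilde T = T-\bar a$. The idea is a two‑stage (splitting in age/time) argument combined with the structure forced by Hypothesis \ref{conditionbeta}. First I would observe that on the time interval $(0,\tilde T) = (0, T-\bar a)$ the birth term plays no role in the relevant age range: since $\beta(a,x)=0$ for $a\in[0,\bar a]$, the renewal condition $y(t,0,x)=\int_0^A\beta(a,x)y(t,a,x)\,da$ only ``sees'' ages $a>\bar a$, and by the method of characteristic lines the values of $y$ at ages $a\le \bar a$ on $[0,\tilde T]$ are transported from the initial datum $y_0$ along the characteristics $a-t=\mathrm{const}$, solving a (degenerate) parabolic problem with no control. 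Hence I would let $y$ evolve \emph{freely} (with $f\equiv 0$) on $(0,\tilde T)$ starting from $y_0$, obtaining $\bar y := y(\tilde T,\cdot,\cdot)\in L^2(Q_{A,1})$, with $\|\bar y\|_{L^2(Q_{A,1})}\le C\|y_0\|_{L^2(Q_{A,1})}$ by the well‑posedness estimate \eqref{stimau} of Theorem \ref{theorem_existence}.

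Next, I would apply Theorem \ref{NCintermediate} with initial datum $\bar y$ at time $\tilde T$: there is a control $f_\delta\in L^2(\tilde Q)$, supported in $\omega$, driving the solution $y_\delta$ of \eqref{1new} from $y_\delta(\tilde T,\cdot,\cdot)=\bar y$ to a state with $y_\delta(T,a,x)=0$ for a.e. $(a,x)\in(\delta,A)\times(0,1)$, and with $\|f_\delta\|_{L^2(\tilde Q)}\le C\|\bar y\|_{L^2(Q_{A,1})}$. The global candidate control is then $f_\delta$ extended by $0$ on $(0,\tilde T)\times(0,A)\times(0,1)$, and the global candidate trajectory is the free evolution on $(0,\tilde T)$ glued with $y_\delta$ on $(\tilde T,T)$. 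I would check that this glued function is the (unique) solution of \eqref{1}: it satisfies the PDE on each subcylinder, the lateral and renewal boundary conditions hold throughout (the renewal condition being automatically inherited since both pieces satisfy it), and continuity across $t=\tilde T$ in $L^2(Q_{A,1})$ holds by construction (using the $C([0,T];L^2(Q_{A,1}))$ regularity in $\mathcal U$). By uniqueness in Theorem \ref{theorem_existence}, this glued function \emph{is} $y_\delta$, so $y_\delta(T,a,x)=0$ on $(\delta,A)\times(0,1)$ as required, and chaining the two estimates gives $\|f_\delta\|_{L^2(Q)}=\|f_\delta\|_{L^2(\tilde Q)}\le C\|\bar y\|_{L^2(Q_{A,1})}\le C\|y_0\|_{L^2(Q_{A,1})}$, which is \eqref{stimaf}.

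The step I expect to be the main obstacle — and the place where the ``technique based on cut off functions'' advertised in the introduction really enters — is making rigorous that on $(0,\tilde T)$ the dynamics genuinely decouples so that the free evolution produces an admissible intermediate state, and that the gluing is legitimate at the level of the function spaces involved (in particular that $\bar y$ has enough regularity to serve as initial datum for Theorem \ref{NCintermediate}, whose proof via Proposition \ref{obser.} implicitly requires a density/approximation argument through the class $\mathcal W$). Concretely, I would: (i) use the explicit characteristic representation — analogous to \eqref{implicitformula}–\eqref{v(0)} for the adjoint problem but for the direct problem \eqref{1} — to write $y$ on $(0,\tilde T)$ via the semigroup $S(\cdot)$ generated by $\mathcal A_0-\mu\,\mathrm{Id}$, noting that for $a\le t$ one gets $y(t,a,\cdot)=S(a)\,[\,\text{renewal data at }t-a\,]$ and for $a>t$ one gets $y(t,a,\cdot)=S(t)y_0(a-t,\cdot)$, and that Hypothesis \ref{conditionbeta} guarantees the renewal data involves only ages $>\bar a\ge \tilde T$ where no control has yet acted; (ii) invoke \eqref{stimau} to control $\|\bar y\|_{L^2(Q_{A,1})}$; (iii) argue the gluing by uniqueness. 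A secondary technical point is that Theorem \ref{NCintermediate} controls only the portion $(\delta,A)$ of the final age fiber, which is exactly what is claimed here, so no additional work is needed on that front; the constant $C=C(\delta)$ is inherited directly.
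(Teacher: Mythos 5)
Your proposal is correct and follows essentially the same route as the paper: free evolution on $(0,\tilde T)$ with $\tilde T=T-\bar a$, then Theorem \ref{NCintermediate} applied to the intermediate state $\bar y=y(\tilde T,\cdot,\cdot)$, gluing by uniqueness, and chaining the two estimates. The only (harmless) differences are that you invoke \eqref{stimau} to bound $\|\bar y\|_{L^2(Q_{A,1})}$ where the paper re-derives that bound by an explicit energy/Gronwall computation, and that your worries about characteristic-line decoupling and cut-off functions are unnecessary here — the free problem on $(0,\tilde T)$ is well posed as is, and Theorem \ref{NCintermediate} needs only $\bar y\in L^2(Q_{A,1})$ (the cut-off technique enters only in Theorem \ref{ultimo'}).
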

\begin{proof}
 The proof is similar to the one of \cite[Theorem 4.8]{fJMPA}. However, here we make all the calculations in order to make precise some steps in \cite[Theorem 4.8]{fJMPA}, where there is a misprint and a term was missing.

As a first step, set $\tilde T:= T-\bar a \in (0,T)$.
By Theorem \ref{theorem_existence}, there exists a unique solution $u$  of
\begin{equation} \label{1new2}
\begin{cases}
\ds \frac{\partial u}{\partial t}  +\frac{\partial u}{\partial a}
-(k(x)u_{x})_x+\mu(t, a, x)u =0  &\quad \text{in } (0, \tilde T) \times(0,A)\times (0,1),\\
  u(t, a, 1)=u(t, a, 0)=0  &\quad \text{on }  (0, \tilde T) \times(0,A),\\
 u(0 , a, x)=y_0(a, x) &\quad \text{in } (0,A)\times (0,1),\\
 u(t, 0, x)=\int_0^A \beta (a, x)u (t, a, x) da  &\quad  \text{in }(0, \tilde T) \times(0,1).
\end{cases}
\end{equation}
Set $\tilde y_0(a,x) :=u(\tilde T, a, x)$; clearly $\tilde y_0 \in L^2(Q_{A,1})$.
Now, consider 
\begin{equation} \label{1new1}
\begin{cases}
\ds \frac{\partial w}{\partial t}  +\frac{\partial w}{\partial a}
-(k(x)w_{x})_x+\mu(t, a, x)w =h(t,x,a)\chi_{\omega} & \quad \text{in } \tilde Q,\\
  w(t, a, 1)=w(t, a, 0)=0&  \quad \text{on } \tilde Q_{T,A},\\
 w(\tilde T , a, x)=\tilde y_0(a, x) &\quad \text{in }Q_{A,1},\\
 w(t, 0, x)=\int_0^A \beta (a, x)w (t, a, x) da & \quad  \text{in } \tilde Q_{T,1}.
\end{cases}
\end{equation}
Again, by Theorem \ref{theorem_existence}, there exists a unique solution $w_\delta $  of \eqref{1new1} and, by the previous Theorem, there exists a control $h_\delta \in L^2 (\tilde Q)$ such that
\[
w_\delta(T,a,x)=0 \text{ a.e. } (a,x) \in (\delta, A) \times(0,1)
\]
and
\[
\|h_\delta\|_{L^2(\tilde Q)} \le C \|\tilde y_0\|_{L^2(Q_{A,1})},
\]
for a positive constant $C$.

Now, define $y_\delta$ and $f_\delta$ by
\[
y_\delta:= \begin{cases} u, & \text{in}\; [0, \tilde T],\\
w_\delta, & \text{in} \; [\tilde T, T] \end{cases} \quad \text{and}\quad f_\delta:=\begin{cases}0, & \text{in}\; [0, \tilde T], \\ h_\delta, & \text{in}\; [\tilde T, T]. \end{cases}
\]
Then $y_\delta$ satisfies \eqref{1} and $f_\delta \in L^2(Q)$ is such that
\[
y_\delta(T,a,x)=0 \text{ a.e. } (a,x) \in (\delta, A) \times(0,1).
\]
Indeed $y_\delta(T,a,x) = w_\delta(T,a,x) =0$ a.e. $(a,x) \in (\delta, A) \times (0,1)$.

Now, we prove  \eqref{stimaf}.
As a first step, as in \cite{fJMPA}, we multiply the equation of \eqref{1new1} by $u$. Then, integrating over $Q_{A,1}$, we obtain:
\[
\begin{aligned}
\frac{1}{2}\frac{d}{dt} \int_0^A \int_0^1 u^2 dxda &+ \frac{1}{2}\int_0^1 u^2(t,A,x)dx + \int_0^A\int_0^1 ku_x^2 dxda + \int_0^A \int_0^1 \mu u^2 dxda \\
&=  \frac{1}{2}\int_0^1u^2(t,0,x)dx.
\end{aligned}
\] 
Hence, using the fact that $u(t,0,x)= \int_0^A \beta(a,x) u(t,a,x)da$, we have
\[
\frac{1}{2}\frac{d}{dt} \int_0^A \int_0^1u^2dxda \le \frac{1}{2}\int_0^1 \left(\int_0^A \beta(a,x) u(t,a,x)da\right)^2dx\le  \frac{C}{2}\int_0^A\int_0^1 u^2dxda.
\]
Setting $F(t):= \|u(t)\|^2_{L^2(Q_{A,1})}$ and multiplying the previous inequality by $e^{-Ct}$, it results
\[
\frac{d}{dt} \left(e^{-Ct}F(t)\right) \le 0.
\]
Integrating over $(0,t)$, for all $t\in [0,T]$, we obtain
\[
 \int_0^A \int_0^1u^2(t,a,x)dxda \le e^{CT}\int_0^A \int_0^1 u^2(0,a,x)dxda= e^{CT} \int_0^A \int_0^1 y_0^2(a,x) dxda.
\]
In particular,
\[
 \int_0^A \int_0^1u^2(\tilde T,a,x)dxda \le e^{CT} \int_0^A \int_0^1 y_0^2(a,x) dxda.
\]
Thus, 
\begin{equation}\label{ultima1}
\begin{aligned}
\|f_\delta \|_{L^2(Q)}^2 &= \int_{\tilde T}^T\int_0^A \int_0^1h_\delta^2dxdadt  \le C \|\tilde y_0\|_{L^2(Q_{A,1})}^2 \\
&=C \int_0^A \int_0^1 u^2(\tilde T, a,x)dxda \le C \int_0^A \int_0^1 y_0^2(a,x) dxda,
\end{aligned}
\end{equation}
for a  strictly positive constant $C$. 
Hence,
 \eqref{stimaf} follows.
\end{proof}

As a consequence of the previous theorem, we obtain the null controllability property if the coefficient $k$ degenerates at $0$ and at $1$ at the same time.

\begin{Theorem}\label{ultimo'} Assume Hypotheses $\ref{ipok}$  and $\ref{conditionbeta}$. Then, given $T \in (0, A)$ and $y_0 \in L^2(Q_{A,1})$,  for every $\delta \in (T,A)$,   there exists a control  $f_\delta \in L^2(Q)$ such that the solution $y_\delta$ of  \eqref{1}
satisfies
\[
y_\delta(T,a,x) =0 \quad \text{a.e. } (a,x) \in (\delta, A) \times (0,1).
\]
Moreover, there exists $C=C(\delta)>0$ such that
\begin{equation}\label{stimaf1}
\|f_\delta\|_{L^2( Q)} \le C \|y_0\|_{L^2(Q_{A,1})}.
\end{equation}
\end{Theorem}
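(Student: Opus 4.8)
The plan is to reduce Theorem \ref{ultimo'} (degeneracy at both endpoints) to the one-endpoint cases already established, by localizing with a partition of unity in the space variable and gluing the resulting controls. The key observation is that in the two-sided degenerate case, Hypothesis \ref{ipok} guarantees that $k$ satisfies Hypothesis \ref{BAss01} near $x=0$ and Hypothesis \ref{BAss02} near $x=1$, while $k$ is nondegenerate (bounded below) away from the endpoints. So the domain $(0,1)$ splits into a left piece $(0,\beta_1)$, a middle piece $(\beta_0,\beta_2)$ (with $\beta_0<\beta_1<\beta_2<\beta_3$ chosen inside the overlaps), and a right piece $(\beta_2,1)$, on each of which the relevant null controllability result applies: the left and right pieces by Theorem \ref{ultimo} (applied on subintervals, as permitted by the remark after Theorem \ref{Cor1} and Theorem \ref{Cor1'} that the Carleman estimates localize to $(B,C)$), and the middle piece by the non-degenerate theory underlying Theorem \ref{nondegenere}.

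First I would fix cut-off functions $\rho_0,\rho_1,\rho_2$ with $\rho_0+\rho_1+\rho_2\equiv 1$ on $[0,1]$, $\supp\rho_0\subset[0,\beta_1)$, $\supp\rho_2\subset(\beta_2,1]$, $\supp\rho_1\subset(\beta_0,\beta_3)$, each supported so that the control region $\omega=(\alpha,\rho)$ meets every overlap region. Then I would solve three auxiliary controllability problems: on $(0,\beta_1)$ with homogeneous Dirichlet condition at $\beta_1$ and degenerate condition at $0$, steering $\rho_0 y_0$ to a state vanishing on $(\delta,A)$; similarly on $(\beta_2,1)$; and on $(\beta_0,\beta_3)$ for $\rho_1 y_0$ using the non-degenerate result. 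Each yields a control in $L^2$ of the respective cylinder with norm controlled by $\|y_0\|_{L^2(Q_{A,1})}$ via the corresponding estimate \eqref{stimaf}. The controlled trajectory for \eqref{1} on the whole interval is then $y_\delta := \rho_0 y^{(0)} + \rho_1 y^{(1)} + \rho_2 y^{(2)}$, and the required global control is obtained by collecting, for the equation satisfied by $y_\delta$, the localization commutator terms $(k\rho_i' y^{(i)})_x + \rho_i' k y^{(i)}_x$ together with the pieces $\rho_i f^{(i)}\chi_\omega$; crucially all these extra source terms are supported in $\omega$ because the $\rho_i'$ are supported in the overlaps, which lie inside $\omega$. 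Since $y_\delta(T,a,x)=\sum_i\rho_i y^{(i)}(T,a,x)=0$ a.e.\ on $(\delta,A)\times(0,1)$, and $\|f_\delta\|_{L^2(Q)}\le C\sum_i(\|f^{(i)}\|+\|y^{(i)}\|_{\mathcal U})\le C\|y_0\|_{L^2(Q_{A,1})}$ by the well-posedness estimate \eqref{stimau}, \eqref{stimaf1} follows.

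The main obstacle is the bookkeeping on the time interval: Theorem \ref{ultimo} actually produces a control only on $(\tilde T,T)\times(0,A)\times(0,1)$ with $\tilde T=T-\bar a$, running the system freely on $[0,\tilde T]$ first (via \eqref{1new2}) and then controlling the intermediate state $\tilde y_0$. I would therefore mimic that two-stage structure globally: let $u$ solve \eqref{1new2} on $[0,\tilde T]$ for the full problem, set $\tilde y_0(a,x):=u(\tilde T,a,x)$, and then apply the three-piece localization argument above on $[\tilde T,T]$ with initial datum $\tilde y_0$ — this is legitimate because each one-endpoint theorem already incorporates the free-evolution-then-control scheme. The estimate $\|\tilde y_0\|_{L^2(Q_{A,1})}\le e^{CT}\|y_0\|_{L^2(Q_{A,1})}$ is exactly the Gronwall computation carried out in the proof of Theorem \ref{ultimo} (using $u(t,0,x)=\int_0^A\beta u\,da$ and $\beta\in C(\bar Q_{A,1})$), so it transfers verbatim. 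The remaining routine point is verifying that the cut-offs can be chosen with all three overlap regions contained in $\omega=(\alpha,\rho)\subset\subset(0,1)$; since $\omega$ is a fixed nonempty open interval this is immediate by taking $\beta_0,\beta_1,\beta_2,\beta_3$ with $\alpha<\beta_0<\beta_1<\beta_2<\beta_3<\rho$, so that the left and right degenerate problems are posed on $(0,\beta_1)$ and $(\beta_2,1)$ respectively, each containing a nontrivial part of $\omega$ to its right, respectively left, of the degeneracy.
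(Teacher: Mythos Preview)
Your localization-and-gluing strategy is the same idea the paper uses, but the execution differs in one structural point and contains one genuine slip.

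\textbf{The slip.} You take the initial datum of the $i$-th auxiliary problem to be $\rho_i y_0$ and then set $y_\delta=\sum_i \rho_i y^{(i)}$. At $t=0$ this gives $y_\delta(0,a,x)=\sum_i \rho_i(x)^2\,y_0(a,x)$, which is not $y_0$ (you imposed $\sum_i\rho_i=1$, not $\sum_i\rho_i^2=1$). The paper avoids this by giving each sub-problem the \emph{same} initial datum $y_0$ (restricted to the sub-interval); then $\xi u_1+\eta u_2+\phi u_3$ evaluated at $t=0$ returns $(\xi+\eta+\phi)y_0=y_0$. Your scheme is easily repaired the same way.

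\textbf{The structural difference.} You solve \emph{three} controllability problems, invoking a non-degenerate null controllability result for the population model on the middle interval $(\beta_0,\beta_3)$. The paper solves only \emph{two} controlled problems (one for each degenerate endpoint, on $(0,\bar\beta)$ and $(\bar\alpha,1)$) and, for the middle, uses the \emph{uncontrolled} solution $u_3$ of the full problem multiplied by the time-damping factor $F(t)=(T-t)/T$ and a cut-off $\phi=1-\xi-\eta$ supported inside $\omega$. Since $F(T)=0$, the middle piece vanishes at time $T$ automatically; the extra source term $-\tfrac{1}{T}\phi u_3$ produced by $F'$ is supported in $\omega$ and just becomes part of the control. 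This is a bit slicker: it stays entirely within the results already proved in the paper and does not require a separate non-degenerate controllability theorem. Your variant is also correct in principle (the non-degenerate case is classical), but it imports an extra black box.

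Finally, your ``main obstacle'' paragraph about first evolving freely on $[0,\tilde T]$ and then localizing on $[\tilde T,T]$ is unnecessary: Theorem~\ref{ultimo} is already stated on $[0,T]$ and incorporates that two-stage structure internally, so you can apply it directly to each endpoint sub-problem on $[0,T]$ without redoing the Gronwall step.
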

\begin{proof}Fix $y_0 \in L^2(Q_{A,1})$ and consider the two problems
\begin{equation}\label{u1}
(P_1)\quad \begin{cases}
\displaystyle {\frac{\partial y}{\partial t}+\frac{\partial y}{\partial a}}
-(k(x)y_{x})_x+\mu(t, a, x)y =f(t,a,x)\chi_{\omega} & \quad \text{in } (0,T)\times(0,A)\times(0, \bar \beta),\\
  y(t, a, \bar \beta)=y(t, a, 0)=0 & \quad \text{on }Q_{T,A},\\
 y(0, a, x)=y_0(a, x) &\quad \text{in } (0,A)\times(0, \bar \beta),\\
 y(t, 0, x)=\int_0^A \beta (a, x)y (t, a, x) da  &\quad  \text{in } (0,T)\times(0, \bar \beta),
\end{cases}
\end{equation}
and
\begin{equation}\label{u2}
(P_2)\quad \begin{cases}
\displaystyle {\frac{\partial y}{\partial t}+\frac{\partial y}{\partial a}}
-(k(x)y_{x})_x+\mu(t, a, x)y =f(t,a,x)\chi_{\omega} & \quad \text{in } (0,T)\times(0,A)\times(\bar \alpha,1),\\
  y(t, a, 1)=y(t, a, \bar \alpha)=0 & \quad \text{on }Q_{T,A},\\
 y(0, a, x)=y_0(a, x) &\quad \text{in } (0,A)\times(\bar \alpha,1),\\
 y(t, 0, x)=\int_0^A \beta (a, x)y (t, a, x) da  &\quad  \text{in } (0,T)\times(\bar \alpha,1),
\end{cases}
\end{equation}
where $\bar \alpha \in (0, \alpha)$ and $\bar \beta\in (\beta,1)$. Thus, by Theorem \ref{ultimo},  there exist two controls $h_{1, \delta}$ and $h_{2, \delta}$ such that the solutions $u_{1,\delta}$ and $u_{2,\delta}$ of $(P_1)$ and $(P_2)$, associated to $h_{1, \delta}$ and $h_{2, \delta}$, respectively, satisfy
\[
u_{1, \delta}(T,a,x) =0 \quad \text{a.e. } (a,x) \in (\delta, A) \times (0,\bar \beta),
\]
and
\[
u_{2, \delta}(T,a,x) =0 \quad \text{a.e. } (a,x) \in (\delta, A) \times (\bar \alpha,1).
\]
Moreover, there exists $C>0$ such that
\[
\int_0^T\int_0^A\int_0^{\bar\beta}h_{1,\delta} \le C \|y_0\|_{L^2(Q_{A,1})}
\]
and
\[
\int_0^T\int_0^A\int_{\bar\alpha}^1h_{2,\delta} \le C \|y_0\|_{L^2(Q_{A,1})}
\]
Denote with $u_1$ and $h_1$ (respectively $u_2$ and $h_2$)  the trivial extensions of $u_{1, \delta}$ and $h_{1, \delta}$  (respectively $u_{2, \delta}$ and $h_{2, \delta}$) to $[\bar \beta,1]$ (respectively $[0, \bar \alpha]$), so that all functions are defined in  the interval $[0,1]$. Clearly, they depends always on $\delta$ and
\begin{equation}\label{stimaf2}
\|h_i\|_{L^2( Q)} \le C \|y_0\|_{L^2(Q_{A,1})}, \; i=1,2.
\end{equation}
Now, let $u_3$ be the solution of
\begin{equation}\label{u3}
\begin{cases}
\displaystyle {\frac{\partial y}{\partial t}+\frac{\partial y}{\partial a}}
-(k(x)y_{x})_x+\mu(t, a, x)y =0 & \quad \text{in } (0,T)\times(0,A)\times(0,1),\\
  y(t, a, 1)=y(t, a, 0)=0 & \quad \text{on }Q_{T,A},\\
 y(0, a, x)=y_0(a, x) &\quad \text{in } (0,A)\times(0,1),\\
 y(t, 0, x)=\int_0^A \beta (a, x)y (t, a, x) da  &\quad  \text{in } (0,T)\times(0,1),
\end{cases}
\end{equation}
and consider the three smooth cut off functions $\xi, \eta, \phi : [0,1] \to \Bbb R$ defined as
  $$\begin{cases}
    0 \leq \xi (x)  \leq 1, &  \text{ for all } x \in [0,1], \\
    \xi (x) = 1 ,  &   x \in [0,(2\alpha +\rho)/3], \\
    \xi (x)=0, &     x \in [(\alpha +2\rho)/3,1],
    \end{cases}$$
    $$\begin{cases}
    0 \leq \eta (x)  \leq 1, &  \text{ for all } x \in [0,1], \\
    \eta (x) = 0 ,  &   x \in [0, (2\alpha +\rho)/3], \\
    \eta (x)=1, &     x \in [(\alpha +2\rho)/3,1]
    \end{cases}$$
and $\phi:= 1-\xi-\eta$. Finally, take
\[
y(t,a,x) = \xi u_1+\eta u_2 + F(t) \phi u_3,
\]
where $F(t):= \ds \frac{T-t}{T}$. 

It is easy to verify that  $y(t,a,0)=y(t,a,1)=0$, $y(0,a,x) = y_0(a,x)$ (since $F(0)=1$) and
 $y(t, 0, x)=\int_0^A \beta (a, x)y (t, a, x) da$. Moreover,
 \[
 y(T,a,x)=0 \quad \text{a.e. } (a,x) \in (\delta, A) \times (0,1)
 \]
 and
  $y$ satisfies the equation of \eqref{1} with 
  \[
  \begin{aligned}
  f_\delta&= \xi h_1\chi_{\omega} +\eta h_2\chi_\omega -\frac{1}{T}\phi u_3 - F(t) k \phi'u_{3,x} - F(t)(k\phi'u_3)_x\\
  &- k \xi'u_{1,x} - (k\xi'u_1)_x-  k \eta'u_{2,x} -(k\eta'u_2)_x.
  \end{aligned}
  \]
  Obviously, the support of $f_\delta$ is contained in $\omega$ and, since $k \in C^1(\overline\omega)$, the terms  $(k\phi'u_3)_x$, $ (k\xi'u_1)_x$ and $(k\eta'u_2)_x$ are $L^2(0,1)$ (recall that $\phi'(x)=\xi'(x)=\eta'(x)=0$ for all $x \in (0,1) \setminus \omega$); thus  $f_\delta \in L^2(Q)$ as required. As in \cite{fm_opuscola}, estimate \eqref{stimaf1} follows by the definition of $f_\delta$, \eqref{stimaf2} and \eqref{stimau} for $u_i$, $i=1,2,3$.
\end{proof}
Observe that the previous result can be proved also for the problem in non divergence form considered in \cite{fJMPA}.
\section{Appendix} 
\subsection{Proof of \eqref{stimau}:}
Multiplying the equation of \eqref{1} by $y$ and  integrating over $(0,A) \times (0,1)$, we obtain
\[
\begin{aligned}
&\frac{1}{2}\frac{d}{dt}\|y(t)\|^2_{L^2(Q_{A,1})}+ \frac{1}{2}\int_0^1y^2(t,A,x)dx -\frac{1}{2}\int_0^1y^2(t,0,x)dx +\int_0^A\int_0^1ky_x^2dxda \\
&=-\int_0^A\int_0^1\mu y^2dxda + \int_0^A\int_\omega fy dx da.
\end{aligned}
\]
Hence, using the initial condition $y(t,0,x) = \int_0^A \beta(a,x) y(t,a,x) da$, the assumptions on $\beta$ and $\mu$ and the inequa\-li\-ty of  Jensen, one has
\begin{equation}\label{stima1}
\begin{aligned}
&\frac{1}{2}\frac{d}{dt}\|y(t)\|^2_{L^2(Q_{A,1})} + \frac{1}{2}\int_0^1y^2(t,A,x)dx  +\int_0^A\int_0^1ky_x^2dxda \\
&\le \frac{C}{2}\int_0^A\int_0^1y^2(t,a,x)dxda + \frac{1}{2} \int_0^A\int_0^1 f^2 dx da,
\end{aligned}
\end{equation}
where $C$ is a positive constant.
Since $\int_0^1y^2(t,A,x)dx $ and $\int_0^A\int_0^1ky_x^2dxda$ are positive, we deduce
\[
\frac{d}{dt}\|y(t)\|^2_{L^2(Q_{A,1})} \le C\|y(t)\|^2_{L^2(Q_{A,1})} + \|f(t)\|^2_{L^2(Q_{A,1})}.
\]
Setting $F(t):= \|y(t)\|^2_{L^2(Q_{A,1})} $ and multiplying the previous inequality by $e^{-Ct}$, one has
\begin{equation}\label{stima2}
\frac{d}{dt}(e^{-Ct}F(t)) \le  e^{-Ct}\|f(t)\|^2_{L^2(Q_{A,1})}.
\end{equation}
Integrating \eqref{stima2} over $(0,t)$, for all $t \in [0,T]$ it follows
\[
e^{-Ct}F(t) \le F(0) + \int_0^te^{-C\tau}\|f(\tau)\|^2_{L^2(Q_{A,1})}d\tau.
\]
Hence, for all $t \in [0,T]$,
\[
F(t) \le e^{CT}\left( F(0) + \int_0^T\|f(\tau)\|^2_{L^2(Q_{A,1})}d\tau\right)
\]
and
\begin{equation}\label{stima3}
\sup_{t \in [0,T] } \|y(t)\|^2_{L^2(Q_{A,1})} \le C\left(  \|y_0\|^2_{L^2(Q_{A,1})}  +\|f\|^2_{L^2(Q
)}d\tau\right).
\end{equation}
Therefore, by \eqref{stima1}, it follows
\[
\begin{aligned}
\frac{1}{2}\frac{d}{dt}\|y(t)\|^2_{L^2(Q_{A,1})} +\int_0^A\int_0^1ky_x^2dxda \le \frac{C}{2}\int_0^A\int_0^1y^2(t,a,x)dxda + \frac{1}{2} \int_0^A\int_0^1 f^2 dx da.
\end{aligned}
\]
Integrating over $(0,T)$, we have
\[
\begin{aligned}
\frac{1}{2}\|y(T)\|^2_{L^2(Q_{A,1})} +\int_0^T\int_0^A\int_0^1ky_x^2dxdadt&\le\frac{1}{2}\|y_0\|^2_{L^2(Q_{A,1})}+ \frac{C}{2}\int_0^T\int_0^A\int_0^1y^2(t,a,x)dxdadt\\
& + \frac{1}{2} \int_0^T\int_0^A\int_0^1 f^2 dx dadt.
\end{aligned}
\]
Hence, by \eqref{stima3},
\begin{equation}\label{stima4}
\begin{aligned}
\int_0^T\int_0^A\|\sqrt{k}y_x\|^2_{L^2(0,1)}dadt&\le\|y_0\|^2_{L^2(Q_{A,1})}+ C\int_0^T\|y(t)\|^2_{L^2(Q_{A,1})}dt+ \|f\|^2_{L^2(Q)}\\
&\le C\left(  \|y_0\|^2_{L^2(Q_{A,1})}  +\|f\|^2_{L^2(Q
)}d\tau\right)
\end{aligned}
\end{equation}
and \eqref{stimau} follows by \eqref{stima3} and \eqref{stima4}.
\subsection{Proof of Proposition \ref{HP}:}
We consider case (i), Hypothesis (HP1).
Fix $\beta \in (\theta,1)$ arbitrarily for the moment. Since
$w(1)=0$, we have
$$
\int_0^1 \dfrac{k(x)}{(1-x)^2}w^2(x)\, dx = \int_0^1 \dfrac{k(x)}{(1-x)^2}
\Big(\int_x^1 (1-y)^{\beta/2}w^{\prime}(y) (1-y)^{-\beta/2}\,dy
\Big)^2\, dx.
$$
\noindent This implies
$$
\int_0^1 \dfrac{k(x)}{(1-x)^2}w^2(x)\, dx \leq \int_0^1
\dfrac{k(x)}{(1-x)^2} \Big(\int_x^1 (1-y)^{\beta}|w^{\prime}(y)|^2 \,dy
\int_x^1(1- y)^{-\beta}\,dy \Big)\, dx.
$$

\noindent Hence, we have
$$
\int_0^1 \dfrac{k(x)}{(1-x)^2}w^2(x)\, dx \leq
\dfrac{1}{1-\beta}\int_0^1 \dfrac{k(x)}{(1-x)^{1+\beta}} \Big(\int_x^1
(1-y)^{\beta}|w^{\prime}(y)|^2 \,dy \Big)\, dx.
$$

\noindent By the Theorem of Fubini, it follows

\begin{equation}\label{1''}
\int_0^1 \dfrac{k(x)}{(1-x)^2}w^2(x)\, dx \leq
\dfrac{1}{1-\beta}\int_0^1 (1-y)^{\beta}|w^{\prime}(y)|^2
\Big(\int_0^y \dfrac{k(x)}{(1-x)^{1+\beta}} \,dx\Big)\, dy \,.
\end{equation}
Now, divide the right hand side of \eqref{1''} into three parts, i.e.
$$
\int_0^1 (1-y)^{\beta}|w^{\prime}(y)|^2 \Big(\int_0^y
\dfrac{k(x)}{(1-x)^{1+\beta}} \,dx\Big)\, dy =L_{\epsilon} +
M_{\epsilon} + N_{\epsilon},
$$
\noindent where
$$
L_{\epsilon}=\int_0^{1-\epsilon} (1-y)^{\beta}|w^{\prime}(y)|^2 \Big(\int_0^y
\dfrac{k(x)}{(1-x)^{1+\beta}} \,dx\Big)\, dy,
$$
$$
M_{\epsilon}= \int_{1-\epsilon}^1 (1-y)^{\beta}|w^{\prime}(y)|^2 \Big(\int_0^{1-\epsilon}
\dfrac{k(x)}{(1-x)^{1+\beta}} \,dx\Big)\, dy,
$$
\noindent and
$$
N_{\epsilon}= \int_{1-\epsilon}^1 (1-y)^{\beta}|w^{\prime}(y)|^2 \Big(\int_{1-\epsilon}^y
\dfrac{k(x)}{(1-x)^{1+\beta}} \,dx\Big)\, dy.
$$
\noindent Thanks to our hypothesis, there exists
$\epsilon>0$ such that the function
$$
x \longrightarrow \dfrac{k(x)}{(1-x)^{\theta}} \mbox{ is nondecreasing
on } [1-\epsilon,1);
$$
thus, for $N_{\epsilon}$, we have
\begin{equation}\label{7}
\begin{aligned}
N_{\epsilon}&= \int_{1-\epsilon}^1 (1-y)^{\beta}|w^{\prime}(y)|^2 \Big(\int_{1-\epsilon}^y
\dfrac{k(x)}{(1-x)^{1+\beta}} \,dx\Big)dy\\
& \leq \int_{1-\epsilon}^1 (1-y)^{\beta-\theta}|w^{\prime}(y)|^2
k(y)\left( \int_{1-\epsilon}^y(1-x)^{\theta -\beta -1}
dx\right)dy \leq\dfrac{1}{(\beta - \theta)}
\int_{1-\epsilon}^1 k(y) |w^{\prime}(y)|^2dy.
\end{aligned}
\end{equation}
\noindent For $M_{\epsilon}$, we have
\begin{equation}\label{8}
\begin{aligned}
M_{\epsilon}& \le\sup_{[0,1-\epsilon]}k  \int_{1-\epsilon}^1 (1-y)^{\beta}\frac{k(y)}{k(y)}|w^{\prime}(y)|^2\Big(\int_0^{1-\epsilon}
(1-x)^{-(1+\beta)} \,dx\Big)\, dy \\&\leq \dfrac{ \epsilon^{\theta-\beta}}{\beta k(1-\epsilon)}\sup_{[0,1-\epsilon]}k  \int_{1-\epsilon}^1(1-y)^{\beta-\theta}
k(y)|w^{\prime}(y)|^2\le
C  \int_{1-\epsilon}^1k(x) |w^{\prime}(x)|^2 \,dx.
\end{aligned}
\end{equation}
\noindent Proceeding in a similar way, we obtain
\begin{equation}\label{9}
L_{\epsilon} \leq C \int_0^{1-\epsilon}k(x)
|w^{\prime}(x)|^2 \,dx \,.
\end{equation}
\noindent Using (\ref{7}), (\ref{8}) and (\ref{9}) in (\ref{1''}),
we obtain
\begin{equation}\label{10'}
\int_0^1 \dfrac{k(x)}{(1-x)^2}w^2(x)\, dx \leq C\,\int_0^1 k(x)
|w^{\prime}(x)|^2 \,dx \,,
\end{equation}
\noindent where the constant $C$ depends on $a$, $\epsilon$,
$\theta$ and $\beta$. If one assumes that Hypothesis (HP1)' holds,
that is
$$
x \longrightarrow \dfrac{k(x)}{(1-x)^{\theta}} \mbox { is
nondecreasing on } [0,1),
$$
\noindent then, one can take $\epsilon=1$ in the above
computations, so that $L_{\epsilon}=M_{\epsilon}=0$. Using
then (\ref{7}) in (\ref{1''}), with $\epsilon=1$, one obtains
$$
\int_0^1 \dfrac{k(x)}{(1-x)^2}w^2(x)\, dx \leq
\dfrac{1}{(1-\beta)(\beta - \theta)} \int_0^1 k(x)
|w^{\prime}(x)|^2 \,dx \,.
$$
\noindent We then remark that this last estimate is optimal for
$\beta=\dfrac{\theta+1}{2}$, which gives the desired result.

We now consider case (ii), Hypothesis (HP2). Fix $\beta \in (1, \theta)$ arbitrarily
for the moment. As before
$$
\int_0^1 \dfrac{k(x)}{(1-x)^2}w^2(x)\, dx = \int_0^1 \dfrac{k(x)}{(1-x)^2}
\Big(\int_0^x (1-y)^{\beta/2}w^{\prime}(y) (1-y)^{-\beta/2}dy
\Big)^2dx;
$$
\noindent so that
$$
\int_0^1 \dfrac{k(x)}{(1-x)^2}w^2(x)\, dx \leq \int_0^1
\dfrac{k(x)}{(1-x)^2} \Big(\int_0^x (1-y)^{\beta}|w^{\prime}(y)|^2 \,dy
\int_0^x(1- y)^{-\beta}\,dy \Big)\, dx.
$$

\noindent It follows that
$$
\int_0^1 \dfrac{k(x)}{(1-x)^2}w^2(x)\, dx \leq \dfrac{1}{\beta
-1}\int_0^1 \dfrac{k(x)}{(1-x)^{1+\beta}} \Big(\int_0^x
(1-y)^{\beta}|w^{\prime}(y)|^2 \,dy \Big)\, dx.
$$

\noindent Applying the Theorem of Fubini, we have

\begin{equation}\label{2}
\int_0^1 \dfrac{k(x)}{(1-x)^2}w^2(x)\, dx \leq \dfrac{1}{\beta
-1}\int_0^1 (1-y)^{\beta}|w^{\prime}(y)|^2 \Big(\int_y^1
\dfrac{k(x)}{(1-x)^{1+\beta}} \,dx\Big)\, dy.
\end{equation}
\noindent As before, we rewrite
$$
\int_0^1 (1-y)^{\beta}|w^{\prime}(y)|^2 \Big(\int_y^1
\dfrac{k(x)}{(1-x)^{1+\beta}} \,dx\Big)\, dy= I_{\epsilon} +
J_{\epsilon}+ K_{\epsilon} \,,
$$
\noindent where
$$
I_{\epsilon}= \int_0^{1-\epsilon} (1-y)^{\beta}|w^{\prime}(y)|^2 \Big(\int_y^{1-\epsilon}
\dfrac{k(x)}{(1-x)^{1+\beta}} \,dx\Big)\, dy
$$
$$
J_{\epsilon}= \int_0^{1-\epsilon} (1-y)^{\beta}|w^{\prime}(y)|^2 \Big(\int_{1-\epsilon}^1
\dfrac{k(x)}{(1-x)^{1+\beta}} \,dx\Big)\, dy
$$
\noindent and
$$
K_{\epsilon}= \int_{1-\epsilon}^1 (1-y)^{\beta}|w^{\prime}(y)|^2 \Big(\int_y^1
\dfrac{k(x)}{(1-x)^{1+\beta}} \,dx\Big)\, dy.
$$
\noindent Thanks to our hypothesis, there exists $\epsilon>0$
such that the function
$$
x \longrightarrow \dfrac{k(x)}{(1-x)^{\theta}} \mbox { is
nonincreasing on } [1-\epsilon,1),
$$ thus $K_{\epsilon}$ can be estimated in the following way:
\begin{equation}\label{K}
\begin{aligned}
K_{\epsilon}&= \int_{1-\epsilon}^1 (1-y)^{\beta}|w^{\prime}(y)|^2 \Big(\int_y^1
\dfrac{k(x)}{(1-x)^{1+\beta}} \,dx\Big)\, dy\\&
 \leq
\int_{1-\epsilon}^1 (1-y)^{\beta}|w^{\prime}(y)|^2\dfrac{k(y)}{(1-y)^{\theta}} \int_y^1(1- x)^{\theta -\beta -1} \,dx \leq
 \dfrac{1}{(\theta - \beta)}
\int_{1-\epsilon}^1k(x) |w^{\prime}(x)|^2 \,dx.
\end{aligned}
\end{equation}
\noindent For $J_{\epsilon}$, we  can proceed in a similar way, obtaining
\begin{equation}\label{5}
\begin{aligned}
J_{\epsilon}&= \int_0^{1-\epsilon} (1-y)^{\beta}|w^{\prime}(y)|^2 \Big(\int_{1-\epsilon}^1
\dfrac{k(x)}{(1-x)^{1+\beta}} \,dx\Big)\, dy \leq
\epsilon^{-\beta}\dfrac{k(1-\epsilon)}
{\inf_{[0,1-\epsilon]}k}\int_0^{1-\epsilon}k(y)|w^{\prime}(y)|^2 \, dy\,.
\end{aligned}
\end{equation}
For $I_\epsilon$, we have
\begin{equation}\label{4''}
\begin{aligned}
I_{\epsilon}&= \int_0^{1-\epsilon} (1-y)^{\beta}|w^{\prime}(y)|^2\frac{k(y)}{k(y)} \Big(\int_y^{1-\epsilon}
\dfrac{k(x)}{(1-x)^{1+\beta}} \,dx\Big)\, dy \\
&\leq\frac{ \sup_{[0,1-\epsilon]}k}{\beta \inf_{[0,1-\epsilon]}k}
\int_0^{1-\epsilon}k(y)|w^{\prime}(y)|^2
(1-y)^{\beta}\epsilon^{-\beta} \,dy
\leq C
\int_0^{1-\epsilon} k(x) |w^{\prime}(x)|^2 \,dx \,.
\end{aligned}
\end{equation}
\noindent Using \eqref{K}, \eqref{5} and \eqref{4''} in (\ref{2}),
we obtain
\begin{equation}\label{6}
\int_0^1 \dfrac{k(x)}{(1-x)^2}w^2(x)\, dx \leq C\,\int_0^1 k(x)
|w^{\prime}(x)|^2 \,dx,
\end{equation}
\noindent where the constant $C$ depends on $a$, $\epsilon$,
$\theta$ and $\beta$. If one assumes that hypothesis (HP2)' holds,
that is
$$
x \longrightarrow \dfrac{k(x)}{(1-x)^{\theta}} \mbox { is
noninreasing on } [0,1),
$$
\noindent then, one can take $\epsilon=1$ in the above
computations, so that $J_{\epsilon}=I_{\epsilon}=0$. Using
then (\ref{3}) in (\ref{2}), with $\epsilon=1$, one obtains
$$
\int_0^1 \dfrac{k(x)}{(1-x)^2}w^2(x)\, dx \leq
\dfrac{1}{(\beta-1)(\theta - \beta)} \int_0^1 k(x)
|w^{\prime}(x)|^2 \,dx.
$$
\noindent We then remark that this last estimate is optimal for
$\beta=\dfrac{\theta+1}{2}$, which gives the desired result.

\subsection{Proof of Proposition \ref{caccio}:}
Let us consider a smooth function $\xi: [0,1] \to \Bbb R$ such that
  $$\begin{cases}
    0 \leq \xi (x)  \leq 1, &  \text{ for all } x \in [0,1], \\
    \xi (x) = 1 ,  &   x \in \omega', \\
    \xi (x)=0, &     x \in (0,1)\setminus\omega.
    \end{cases}$$
Then, integrating by parts one has
\begin{equation}\nonumber
\begin{aligned}
0
=&
\int_0^T\!\!\!\frac{d}{dt}\left(\int_0^A\int_0^1(\xi e^{s\psi})^2v^2dxda\right)dt
\\[3pt]
&=
\int_Q 2s\psi_t(\xi e^{s\psi})^2v^2 + 2(\xi e^{s\psi})^2v(-v_a-(kv_{x})_x+ \mu v+f) \:dxdadt
\\[3pt]
&=\; 2s\int_Q \psi_t(\xi e^{s\psi})^2v^2dxdadt+ 2s\int_Q \psi_a(\xi e^{s\psi})^2v^2dxdadt+ 2\int_Q
\left(\xi^2e^{2s\psi}\right)_xkvv_xdxdadt \\[3pt]
&+ 2\int_Q
(\xi^2e^{2s\psi}k)v_x^2dxdadt+2\int_Q
\xi^2e^{2s\psi}\mu v^2dxdadt +2\int_Q
\xi^2e^{2s\psi}fvdxdadt.
\end{aligned}
\end{equation}
Hence, using Young's inequality
\begin{equation}\nonumber
\begin{aligned}
2\int_Q \xi^2e^{2s\psi}kv_x^2dxdadt
&=
-2s\int_Q \psi_t\left(\xi e^{s\psi}\right)^2v^2dxdadt- 2s\int_Q \psi_a(\xi e^{s\psi})^2v^2dxdadt
\\[3pt]&-2\int_Q \left(\xi^2e^{2s\psi}\right)_xkvv_x\:dxdadt-2\int_Q
\xi^2e^{2s\psi}\mu v^2dxdadt\\
&-2\int_Q
\xi^2e^{2s\psi}fvdxdadt
\\[3pt]
&\le -2s\int_Q \psi_t(\xi e^{s\psi})^2v^2dxdadt - 2s\int_Q \psi_a(\xi e^{s\psi})^2v^2dxdadt\\
& + \int_Q\left( \sqrt{k} \frac{( \xi ^2e^{2s\varphi} )_x}{\xi
    e^{s\varphi} }v \right)^2dxdadt
   + \int_Q \xi ^2 e^{2s\varphi} k v_x ^2 dxdadt.
\\&+(2\|\mu\|_{L^\infty(Q)}+1)\int_Q
\xi^2v^2dxdadt +\int_Q
\xi^2e^{2s\psi}f^2dxdadt.
\end{aligned}
\end{equation}
Thus,
\begin{equation}\nonumber
\begin{aligned}
&\inf_{\omega'}\{k\}\int_0^T\int_0^A\int_{\omega'}e^{2s\psi}v_x^2dxdadt \\[3pt]&\le
\left(\sup_{\omega\times(0,T)}\Big\{\left| 4k\left(\xi
e^{s\psi}\:\right)_x^2-2s(\psi_t+\psi_a)(\xi
e^{s\psi})^2\right|\Big\}+(2\|\mu\|_{L^\infty(Q)}+1 \right)\int_0^T\int_0^A\int_{\omega}v^2dxdadt\\
&+\int_Q
f^2e^{2s\psi } dxdadt.
\end{aligned}
\end{equation}

\section*{Acknowledgments}

The author is a member of the Gruppo Nazionale per l'Analisi Matematica, la Probabilit\`a e le loro Applicazioni (GNAMPA) of the
Istituto Nazionale di Alta Matematica (INdAM) and she is supported by the FFABR ``Fondo
per il finanziamento delle attivit\`a base di ricerca'' 2017.

\end{document}